\newtheorem{theorem}{Theorem}[section]
\newtheorem{definition}{Definition}[section]
\newtheorem{lemma}[theorem]{Lemma}
\newtheorem{rem}[theorem]{Remark}
\newtheorem{cor}[theorem]{Corollary}
\numberwithin{equation}{section}
\newcommand\norm[1]{\lVert#1\rVert}
\newcommand\abs[1]{\lvert#1\rvert}
\begin{document}

\title{The open dense conjecture on eventually slow oscillations of the differential equation with delayed negative feedback}

\author{Lirui Feng\thanks{School of Mathematical Sciences, University of Science and Technology of China, Hefei, Anhui, 230026, People’s Republic of China (ruilif@ustc.edu.cn, ruilif@163.com). The author is supported by NSF of China No.12101583 and No.12331006.}}
\date{}
\maketitle
\begin{abstract} In this paper, we show how to use the approach of the strongly order-preserving semiflows with respect to high-rank cones to solve the open dense conjecture on eventually slow oscillations of the differential equation with delayed negative feedback. 
\end{abstract}

\section{Introduction}

\indent In this paper, we investigate the open dense conjecture on eventually slow oscillations of the differential equation with delayed negative feedback, provided by \begin{equation}\label{E:DDE-sys-mu} x^{\prime}(t)=-\mu x(t)+f(x(t-1)),\end{equation} where $\mu\geq 0$ and $f$ is $C^1$-smoth such that $f^{\prime}<0$, $f(0)=0$ and $\sup f<+\infty$ (or $-\infty<\inf f$). The conjecture is to expect that the set of initial points of eventually slowly oscillating (see Definition \ref{SO}) solutions of Equation (\ref{E:DDE-sys-mu}) is open and dense in the phase space.  Mallet-Paret and Walther considered this conjecture in \cite{M-PWalther94} via considering rapid oscillations (see rapidly oscillating solution in Definition \ref{SO}), but the reference \cite{M-PWalther94} is not posted. The original conjecture was posed by Kaplan and Yorke in \cite{K-Y} more than fifty years ago for the special case $\mu=0$ of Equation  (\ref{E:DDE-sys-mu}). 

We show how to utilize the approach of the strongly order-preserving semiflows with respect to high-rank cones to prove that  the set of initial points of eventually slowly oscillating solutions of (\ref{E:DDE-sys-mu}) is open and dense in the phase space.  By virtue of a corollary, we also solve the open dense conjecture on eventually slow oscillations posed by Kaplan and Yorke.

The paper is organized as follows. In section 2, we present notations, definitions, and preliminary lemmas on the differential equation with delayed negative feedback (\ref{E:DDE-sys-mu}) and the strongly order-preserving semiflows with respect to high-rank cones. In section 3, we give fundamental properties of the semiflow $F_t$ generated by Equation (\ref{E:DDE-sys-mu}). In section 4, we present properties related to high-rank cones. In section 5, we solve the open dense conjecture on eventually slow oscillations of Equation  (\ref{E:DDE-sys-mu}).  

\section{Preliminary}
\subsection{The  strongly order-preserving semiflow $\Phi_t$ on a Banach space}
Now, we first give some definitions and notations on a semiflow. Let $\tilde{X}$ be a Banach space. We use $\text{Int}_{\tilde{X}},\,\,\text{Cl}_{\tilde{X}},\,\,\partial_{\tilde{X}}$ to represent the set-value mapping of taking the interior, closure and boundary of a subset in $\tilde{X}$ respectively. For convenience, we denote $\overline{A}$ the closure of $A$ in $\tilde{X}$ for any subset $A\subset \tilde{X}$ in this subsection. Let $\Phi: \mathbb{R}^+\times \tilde{X} \rightarrow \tilde{X}$ be a continuous semiflow, and we denote $\Phi_t=\Phi(t,\cdot)$ for any $t\in\mathbb{R}^+$. A subset $\mathcal{K}$ of $\tilde{X}$ is called {\it positively invariant with respect to $\Phi_t$} if $\Phi_t(\mathcal{K})\subset \mathcal{K}$ for any $t\geq0$; and it is called {\it invariant with respect to $\Phi_t$} if $\Phi_t(\mathcal{K})=\mathcal{K}$ for any $t\geq0$. Denote $O^+(x)=\{\Phi_t(x):\,t\geq 0\}$ the positive semiorbit with the initial point $x\in \tilde{X}$. The omega-limit (abbr. $\omega$-limit) set $\omega(x)$ of $O^+(x)$ with initial point $x\in\tilde{X}$ is defined by $\omega(x)=\cap_{s\geq 0}\text{Cl}_{\tilde{X}}\big(\cup_{t\geq s}O^+(\Phi_t(x))\big)$. If the positive semiorbit $O^+(x)$ is precompact in $\tilde{X}$ (i.e., its closure $\overline{O^+(x)}$ is compact in $\tilde{X}$), then $\omega(x)$ is nonempty, compact, connected, and invariant w.r.t. $\Phi$. An {\it equilibrium} is a point $x\in\tilde{X}$ for which $O^+(x)=\{x\}$ (also called a {\it trivial semiorbit}).

The set of all the equilibria of $\Phi_t$ is denoted by $E$. A nontrivial semiorbit $O^+(x)$ is said to be a {\it periodic orbit} if $\Phi_T(x)=x$ for some $T>0$, and it is called {\it $T$-periodic orbit} if there is a $T>0$ such that $\Phi_T(x)=x$ and $\Phi_t(x)\neq x$ for any $t\in(0,T)$, where $T$ is called the {\it minimal period} of $O^+(x)$.

A {\it negative semiorbit (resp. full-orbit) with initial point $x$} is sometimes viewed as a continuous function $\eta: \mathbb{R}^-=\{t\in\mathbb{R}: t\leq 0\}\mapsto \tilde{X}$ (resp. $\eta: \mathbb{R}\mapsto \tilde{X}$) such that $\eta(0)=x$, and for any $s \leq 0$ (resp. $s\in \mathbb{R}$), $\Phi_t(\eta(s))=\eta(t+s)$ holds for $0\leq t \leq -s$ (resp. $0\leq t$). Clearly, if $\eta$ is such a negative semiorbit with initial point $x$, then $\eta$ can be extended to a full-orbit with initial point $x$, denoted also by $\eta$, which is defined by $\eta(t)=\Phi_t(x),\,t>0$ on the extended domain $\mathbb{R}^+\setminus\{0\}$. On the other hand, any full-orbit with initial point $x$ restricted on $\mathbb{R}^-$ is a negative semiorbit with initial point $x$. Since $\Phi_t$ is a semiflow, a negative semiorbit with initial point $x$ may not exist, and it may not be unique even if exists. Hereafter, we denote by $O_b^-(x)=\{\eta(t):\,t\leq 0\}$ \big(resp. $O_b(x)=\{\eta(t):\,t\in \mathbb{R}\}$\big) a negative semiorbit (resp. full-orbit) with initial point $x$ in order to emphasize its initial point in the phase space $\tilde{X}$ and to coincide with the feature of a positive semiorbit being a subset of $\tilde{X}$. In particular, we write them as $O^-(x)$ (resp. $O(x)$) if such a negative semiorbit (resp. full-orbit) with initial point $x$ is unique. Clearly, for any $x\in \mathcal{K}$, there exists a negative semiorbit with initial point $x$, provided that $\mathcal{K}$ is invariant w.r.t. $\Phi_t$. Given a negative semiorbit $O_b^-(x)$ with initial point $x$, if $O_b^-(x)$ is precompact, then the $\alpha$-limit set $\alpha_b(x)$ of $O_b^-(x)$ is defined by $\alpha_b(x)=\cap_{s\geq 0}\text{Cl}_{\tilde{X}}\big(\cup_{t\geq s}\{ \eta_{-\tau}(x):\,\tau\geq t\}\big)$. We write $\alpha(x)$ as the $\alpha$-limit set of $O^-(x)$ if there is a unique negative semiorbit with initial point $x$. For a given invariant set $\mathcal{K}$ w.r.t. $\Phi_t$, $\Phi_t$ is said {\it admits a flow extention on $\mathcal{K}$} if there is a flow $\tilde{\Phi}_t,\,t\in\mathbb{R}$ on $\mathcal{K}$ such that $\tilde{\Phi}_t(x)=\Phi_t(x)$ for any $x\in\mathcal{K}$ and $t\geq 0$. For convenience, we denote $\Phi_{-t}(x)$ the point such that $\Phi_t (\Phi_{-t}(x))=x, \,t\geq 0$, and hence $O^-(x)=\{\Phi_{-t}(x):\,\,t\geq 0\}$ for any $x\in \mathcal{K}$ if $\Phi_t$ admits a flow extention on $\mathcal{K}$.

A closed subset $C$ of $\tilde{X}$ is called {\it a cone of rank $k$} (abbr. $k$-cone) if the following hold: (i) $\mathbb{R}\cdot C\subset C$; (ii) $\max\{\text{dim}W: W\subset C\,\,\text{linear subspace}\}=k$. A $k$-cone $C$ is called {\it solid} if its interior ${\rm \text{Int}}_{\tilde{X}}C\neq \emptyset$ and it is called {\it $k$-solid} if there exists a $k$-dimensional linear subspace $W$ such that $W\setminus \{0\}\subset {\rm \text{Int}}_{\tilde{X}}C$. Given a $k$-cone $C$, we say that $C$ is complemented if there exists a k-codimensional linear subspace $H^c\subset \tilde{X}$ such that $H^c \cap C=\{0\}$.

Two points $x,y\in\tilde{X}$ are called {\it ordered {\rm(}with respect to $C${\rm)}}, denoted by $x\thicksim y$, if $x-y\in C$; otherwise, they are called {\it unordered {\rm(}with respect to $C${\rm)}}, denoted by $x\rightharpoondown y$. For simplicity, we omit ``with respect to $C$'' throughout this subsection for concepts related to ``order'' which is with respect to $C$, and use the same symbols like ``$\thicksim$, $\rightharpoondown$'' to denote relationships on orders w.r.t. different special $k$-cones throughout this paper. Two points $x,y\in\tilde{X}$ are called {\it strongly ordered}, denoted by $x\approx y$, if $x-y\in {\rm\text{Int}}_{\tilde{X}}C$. For sets $A,\,B\subset \tilde{X}$, we write $A\thicksim B$ if $x-y\in C$ for any $x\in A$ and $y\in B$.  A subset $W\subset \tilde{X}$ is called {\it ordered} if $x \thicksim y$ for any $x,y\in W$. $W$ is called {\it unordered {\rm(}resp. strongly ordered{\rm)}} if there are at least two distinct points in $W$ and $x\rightharpoondown y$ ({\it resp. $x \thickapprox y$}) for any two distinct points $x,y\in W$.

$\Phi_t$ is called {\it monotone with respect to $C$} if $\Phi_t(x) \thicksim \Phi_t(y)$ for any $t>0$ whenever $x\thicksim y$; and it is called {\it strongly monotone with respect to $C$} if $\Phi_t(x) \approx \Phi_t(y)$ for any $t>0$ whenever $x\thicksim y$ with $x\neq y$. 

\begin{definition}
A semiflow $\Phi_t$ is called {\it strongly order-perserving {\rm(}abbr. SOP{\rm)}} with respect to $C$, if it is monotone with respect to $C$, and whenever $x\thicksim y$ with $x\neq y$, there are a $T>0$ and open neighborhoods $\mathcal{U}$ of $x$, $\mathcal{V}$ of $y$ such that $\Phi_t(\mathcal{U}) \thicksim\Phi_t(\mathcal{V})$ for all $t\geq T$. 
\end{definition}

A nontrivial positive semiorbit $O^+(x)$ is called {\it pseudo-ordered} if there exist two distinct points $\Phi_{t_1}(x),\, \Phi_{t_2}(x)$ in $O^+(x)$ such that $\Phi_{t_1}(x)\thicksim \Phi_{t_2}(x)$; otherwise, it is called {\it unordered}.

\begin{lemma}\label{17-A} Assume that $\Phi_t$ on $\tilde{X}$ is a semiflow SOP with respect to a solid $k$-cone $C$, which admits a flow extension on each nonempty omega-limit set of its positive semiorbits. If $O^+(x)$ is a precompact pseudo-ordered semiorbit, then the closure of any full-orbit in $\omega(x)$ is ordered. 
\end{lemma}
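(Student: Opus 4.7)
The plan is to first upgrade the pseudo-order comparison on $O^+(x)$ to a time-$\tau$ shift ordering on the whole $\omega$-limit set $\omega(x)$ via monotonicity and the flow extension, and then use the SOP property to promote these consecutive-step comparisons into orderedness of the closure of any full-orbit in $\omega(x)$, circumventing the non-convexity of $C$.

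By the pseudo-ordered hypothesis I would pick $t_1 < t_2$ with $\Phi_{t_1}(x) \neq \Phi_{t_2}(x)$ and $\Phi_{t_1}(x) \sim \Phi_{t_2}(x)$, and set $\tau := t_2 - t_1$. Monotonicity of $\Phi_s$ for $s \geq 0$ gives $\Phi_{r+\tau}(x) - \Phi_r(x) \in C$ for all $r \geq t_1$. For any $y \in \omega(x)$, choosing $r_n \to \infty$ with $\Phi_{r_n}(x) \to y$ and using precompactness of $O^+(x)$ together with continuity of the flow extension, I get $\Phi_{r_n+\tau}(x) \to \tilde{\Phi}_\tau(y)$ along a subsequence; closedness of $C$ then forces $\tilde{\Phi}_\tau(y) - y \in C$. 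Substituting $\tilde{\Phi}_s(y)$ for $y$ (valid by invariance of $\omega(x)$ under $\tilde{\Phi}$) produces
$$
\tilde{\Phi}_{s+\tau}(y) - \tilde{\Phi}_s(y) \in C \qquad (s \in \mathbb{R},\ y \in \omega(x)),
$$
so along every full-orbit in $\omega(x)$ consecutive $\tau$-shifts are $\sim$-comparable.

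Fix a full-orbit $O_b = \{\eta(t) : t \in \mathbb{R}\} \subset \omega(x)$ and two points $p = \lim_n \eta(a_n), \ q = \lim_n \eta(b_n) \in \mathrm{Cl}_{\tilde{X}}(O_b)$. I would decompose $a_n - b_n = k_n\tau + \rho_n$ with $k_n \in \mathbb{Z}$ and $\rho_n \in [0, \tau)$, extract a subsequence with $\rho_n \to \rho$, and use continuity of $\tilde{\Phi}_\rho$ on compact $\omega(x)$ to reduce the comparison of $p$ and $q$ to two points whose time-difference is an integer multiple of $\tau$. The main obstacle is that the $k$-cone $C$ need not be convex, so the naive telescoping
$$
\tilde{\Phi}_{n\tau}(y) - y = \sum_{j=0}^{n-1}\bigl[\tilde{\Phi}_{(j+1)\tau}(y) - \tilde{\Phi}_{j\tau}(y)\bigr]
$$
does not place the left-hand side in $C$. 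To bypass this, the SOP hypothesis applied to the distinct $\sim$-ordered pair $\Phi_{t_1}(x), \Phi_{t_2}(x)$ supplies $T > 0$ and open neighborhoods $\mathcal{U}, \mathcal{V}$ such that $\Phi_t(\mathcal{U}) \sim \Phi_t(\mathcal{V})$ for all $t \geq T$. Transferring this open-set ordering to the level of $\omega(x)$ through iterates whose images approximate $p$ and $q$, and closing up via compactness of $\mathrm{Cl}_{\tilde{X}}(O_b)$ and closedness of $C$, yields $p \sim q$ directly---without ever summing vectors in $C$. The most delicate step is coordinating the fractional shift $\rho$ with the SOP neighborhood structure so that these limiting comparisons close up uniformly to produce a cone-relation at the limit.
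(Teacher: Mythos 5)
Your first stage is fine: extracting $\tau=t_2-t_1$ from the pseudo-ordering, propagating $\Phi_r(x)\thicksim\Phi_{r+\tau}(x)$ by monotonicity, and passing to the limit with closedness of $C$ to get $\tilde{\Phi}_{s+\tau}(y)-\tilde{\Phi}_s(y)\in C$ for all $y\in\omega(x)$ and $s\in\mathbb{R}$ is standard and correct. The gap is in the second stage, which is precisely the content of the theorem. You correctly observe that non-convexity of the $k$-cone blocks telescoping, but the mechanism you offer to bypass it does not work: applying SOP to the single ordered pair $\Phi_{t_1}(x),\Phi_{t_2}(x)$ gives neighborhoods $\mathcal{U},\mathcal{V}$ whose images are ordered only at \emph{equal} elapsed times $t\geq T$, and the only orbit points known to lie in $\mathcal{U}$ and $\mathcal{V}$ are (essentially) $\Phi_{t_1}(x)$ and $\Phi_{t_2}(x)$ themselves. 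Hence ``transferring this open-set ordering'' reproduces exactly $\Phi_{t+t_1}(x)\thicksim\Phi_{t+t_2}(x)$, i.e.\ the same fixed $\tau$-shift relation you already had; it cannot compare $\eta(a)$ with $\eta(b)$ when $a-b\neq\tau$, and it does not even yield $y\thicksim\tilde{\Phi}_{k\tau}(y)$ for $k\geq 2$, which your reduction to ``integer multiples of $\tau$ plus a residual $\rho$'' tacitly presupposes. The step you flag as ``most delicate'' is therefore not a technical loose end but the entire theorem, and it is left unproved.

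For comparison, the paper does not reprove this statement at all: it cites Theorem A of Feng--Wang--Wu and notes that its proof goes through verbatim when strong monotonicity is weakened to SOP w.r.t.\ a solid $k$-cone. That proof handles arbitrary time separations not by convexity or by a single application of SOP along the original semiorbit, but by a separation/amplification lemma (their Lemma 4.1: a point $z$ ordered with a disjoint compact set $K$ acquires, after a finite time, ordering of whole neighborhoods of $z$ and $K$) combined with a maximal-comparability-interval argument of the type reproduced in this paper's proof of Lemma \ref{order-rela-betw-ome}: one defines a quantity such as $\Gamma(z_1,z_2)=\sup\{t\geq 0:\,z_1\thicksim\Phi_s(z_2)\ \forall s\in[-t,t]\}$, assumes it finite, and uses the neighborhood ordering plus compactness of $\omega(x)$ to strictly enlarge the comparability window, a contradiction; letting the window tend to infinity is what produces ordering of a point with the closure of a full orbit, and hence orderedness of that closure. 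If you want a self-contained proof, you would need to rebuild that machinery (or quote it); as written, your argument establishes only the $\tau$-shift comparability on $\omega(x)$ and asserts the rest.
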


\begin{proof} See \cite[Theorem A]{F-W-W}. We piont out that the proof of \cite[Theorem A]{F-W-W} is also applicable to this lemma even if the assumption $\Phi_t$ being strongly monotone w.r.t. a solid $k$-cone $C$ in \cite{F-W-W} is here weakened as the one $\Phi_t$ being SOP w.r.t. a solid $k$-cone $C$.
\end{proof}

Hereafter, we assume that assumptions of Lemma \ref{17-A} hold in this subsection.

\begin{lemma}\label{17-B} Let $O^+(x)$ be a precompact pseudo-ordered semiorbit. Then one of the following three alternatives must hold: 
\item[(i)] either, $\omega(x)$ is ordered; or,
\item[(ii)] $\omega(x)\subset E$ is unordered; or otherwise,
\item[(iii)] $\omega(x)$ possesses an ordered homoclinic property, i.e., there is an ordered and invariant subset $\tilde{B}\subset \omega(x)$ w.r.t. $\Phi_t$ such that $\tilde{B}\thicksim \omega(x)$, and for any $p\in\omega(x)\setminus \tilde{B}$, we have that $\alpha(p)\cup \omega(p)\subset \tilde{B}$, $\alpha(p)\subset E$, and there is a $\tilde{p}\in \omega(x)\setminus \tilde{B}$ satisfying $\tilde{p}\rightharpoondown p$.

In particular, if $\omega(x)\cap E=\emptyset$, then $\omega(x)$ is ordered. Moreover, if $C$ is complemented, then $\Phi_t$ on $\omega(x)$ is topologically conjugate to a flow $\Psi_t$ on a compact set invariant w.r.t $\Psi_t$ in $\mathbb{R}^k$.
\end{lemma}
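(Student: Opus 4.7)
The plan is to combine Lemma \ref{17-A} -- which gives that every full orbit in $\omega(x)$ has ordered closure -- with a two-level dichotomy on $\omega(x)$. I introduce the \emph{ordered core}
\[
\tilde{B} := \{y \in \omega(x) : y \sim z \text{ for every } z \in \omega(x)\},
\]
which is closed, ordered by construction, and satisfies $\tilde{B} \sim \omega(x)$; forward invariance follows from monotonicity together with $\Phi_t(\omega(x))=\omega(x)$, and backward invariance from the flow extension combined with a limit argument on $\tilde{\Phi}_{-t}$-preimages inside the compact set $\omega(x)$. If $\tilde{B} = \omega(x)$ then alternative (i) holds. Otherwise, for any $p\in\omega(x)\setminus\tilde B$ some $\tilde p\in\omega(x)$ satisfies $\tilde p\rightharpoondown p$, and the defining property of $\tilde B$ forces $\tilde p\in\omega(x)\setminus\tilde B$ (else $\tilde p\sim p$), supplying the $\tilde p$-clause of alternative (iii).

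Next, I split on whether $\omega(x)\subset E$. If yes, $\omega(x)$ is unordered (since $\tilde B\neq\omega(x)$ gives an unordered pair), which is alternative (ii). If not, I aim for (iii) by showing, for each $p\in\omega(x)\setminus\tilde B$, both $\alpha(p)\cup\omega(p)\subset\tilde B$ and $\alpha(p)\subset E$. For the first inclusion: take $q\in\omega(p)$ with $q=\lim_n\Phi_{t_n}(p)$ and any $z\in\omega(x)$. Using the flow extension, pull $z$ back to a convergent subsequence $\tilde{\Phi}_{-t_n}(z)\to z'\in\omega(x)$; once $p\sim z'$ is secured (via Lemma \ref{17-A} applied to a full orbit joining $z'$ to $z$ inside $\omega(x)$), monotonicity gives $\Phi_{t_n}(p)\sim z$, and closedness of $C$ yields $q\sim z$. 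A symmetric argument, running the flow backward via the extension, handles $\alpha(p)\subset\tilde B$.

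The inclusion $\alpha(p)\subset E$ is the crux. I would argue by contradiction: if $q\in\alpha(p)\setminus E$, then $O(q)$ is a nontrivial full orbit in $\omega(x)$ with ordered closure by Lemma \ref{17-A}. The \emph{strong} part of SOP -- producing open neighborhoods $\mathcal{U},\mathcal{V}$ of any distinct ordered pair with $\Phi_t(\mathcal{U})\sim\Phi_t(\mathcal{V})$ for all $t\geq T$ -- combined with the recurrence $\tilde{\Phi}_{-t_n}(p)\to q$ would propagate a strict order-gap along $O(p)$ that iterates to a contradiction with $q\in\alpha(p)$.

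The ``in particular'' clauses follow quickly: if $\omega(x)\cap E=\emptyset$ then (ii) is void and (iii) fails (since $\alpha(p)\subset E=\emptyset$ is incompatible with compactness of backward orbits in $\omega(x)$), so (i) holds. For the topological conjugacy, under the complementation $\tilde X=H^c\oplus W$ with $\dim W=k$, the projection $\pi:\omega(x)\to W$ parallel to $H^c$ is injective on any ordered subset (since $H^c\cap C=\{0\}$, any nonzero $H^c$-fibred difference lies outside $C$), continuous, and closed on the compact $\omega(x)$, yielding the conjugating homeomorphism onto a compact set in $\mathbb{R}^k$. The main obstacle is the step $\alpha(p)\subset E$ in alternative (iii): it genuinely requires the \emph{strong} part of SOP (not merely monotonicity), and the delicate interplay with the flow extension -- available only on $\omega(x)$ -- is where most of the technical work must live.
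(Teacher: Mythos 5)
Your scaffolding (identify an ordered invariant core $\tilde B\subset\omega(x)$, argue the complement is homoclinic to it) matches the paper's supplementary sketch; the paper otherwise defers to \cite[Theorem B, Prop.\ 4.5]{F-W-W}. But the step that carries the most weight in your writeup has a logical gap. To show $\omega(p)\subset\tilde B$, you pull $z$ back along $\tilde{\Phi}_{-t_n}$ to $z'\in\omega(x)$ and then claim ``$p\thicksim z'$ is secured (via Lemma \ref{17-A} applied to a full orbit joining $z'$ to $z$).'' There is no such full orbit: $z'\in\alpha(z)$ need not lie on $O(z)$, and Lemma \ref{17-A} only orders two points when they share an orbit closure. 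Moreover, since $p\in\omega(x)\setminus\tilde B$, by your own definition $p$ is \emph{unordered} with some point of $\omega(x)$, so ``$p\thicksim z'$'' is precisely the kind of relation you cannot assume for an arbitrary limit point $z'$. The ``symmetric argument'' for $\alpha(p)\subset\tilde B$ inherits the same flaw.

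Two further issues. Backward invariance of your pointwise core $\tilde B=\{y\in\omega(x):y\thicksim z\ \forall z\in\omega(x)\}$ is not free: monotonicity propagates order forward but gives no control on preimages, and the ``limit argument on $\tilde{\Phi}_{-t}$-preimages'' you gesture at does not obviously close this. The paper instead takes $\tilde B$ to be the union of the full-orbit closures $\overline{O(b)}\subsetneq\omega(x)$ satisfying $\overline{O(b)}\thicksim\omega(x)$; that set is invariant by construction and is contained in your pointwise core, but showing they coincide is exactly the backward invariance you have not supplied. Finally, your projection $\pi:\omega(x)\to W$ is injective ``on any ordered subset,'' but $\omega(x)$ itself is ordered only in alternative (i); in alternatives (ii) and (iii) it contains unordered pairs $u,v$ with $u-v\notin C$, and $H^c\cap C=\{0\}$ does not forbid $u-v\in H^c$, so $\pi$ may fail to be injective on $\omega(x)$. (And, as you yourself concede, the step $\alpha(p)\subset E$ remains a gesture rather than an argument.)
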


\begin{proof} 
See \cite[Theorem B, Proposition 4.4-4.5, 4.8 and Lemma 4.7]{F-W-W} and proofs therein. For the sake of completeness, we here give some details. By virtue of Lemma \ref{17-A}, we assume the following to proceed the analysis without loss of generality: 

$${\bf(F)}\,\,\,\,\,\,\,\,\overline{O(a)}\subsetneq\omega(x)\,\,\,\text{for any}\,\,\,a\in \omega(x).\,\,\,\,\,\,\,\,\,\,\,$$ Here, $\overline{O(a)}$, the closure of full-orbit $O(a)$ in $\omega(x)$, can be classified into two types:

\item[(P1)] $\overline{O(a)}\thicksim \omega(x)$;

\item[(P2)] there is some $z\in \omega(x)\setminus \overline{O(a)}$ such that $z\rightharpoondown y$ for some $y\in O(a)$.

\noindent Define

$$\begin{aligned}&B=\{\overline{O(b)}\subsetneq \omega(x): \overline{O(b)} \,\,\text{satisfies}\,\,\text{(P1)}\},\,\,\,\tilde{B}=\bigcup\limits_{\overline{O(b)}\in B}\overline{O(b)},\,\,\,\,\text{and}\\
&A=\{\overline{O(a)}\subsetneq \omega(x): \overline{O(a)}\,\,\text{satisfies}\,\,\text{(P2)}\},\,\,\tilde{A}=\bigcup\limits_{\overline{O(a)}\in A}\overline{O(a)}.\end{aligned}$$

\noindent Clearly, $A\cap B=\emptyset$, $\tilde{A}\cup\tilde{B}=\omega(x)$, $\tilde{B}\thicksim\omega(x)$ (hence, $\tilde{B}$ is ordered) and for any $p\in \omega(x)\setminus \tilde{B}$, there is a $\tilde{p}\in\omega(x)\setminus \tilde{B}$ such that $\tilde{p}\rightharpoondown p$. 

We point out that all the arguments in the proofs of \cite[Theorem B and Proposition 4.4-4.5, 4.8 and Lemma 4.7]{F-W-W} with their used priori results are also applicable to this lemma even if the assumption $\Phi_t$ being strongly monotone w.r.t. a solid $k$-cone $C$ in \cite{F-W-W} is here weakened as the one $\Phi_t$ being SOP w.r.t. a solid $k$-cone $C$. 

Under the additional technical assumption ({\bf F}), the alternative (i) and (iii) are deduced for the case $A=\emptyset$ and the case $A\neq\emptyset,\,B\neq\emptyset$ respectively; and more, the alternative (ii) is deduced for the case $B=\emptyset$. \cite[Proposition 4.5]{F-W-W} is devoted to the alternative (iii), and \cite[Proposition 4.8 with Lemma 4.7]{F-W-W} is devoted to the alternative (ii).
\end{proof}

\begin{lemma}\label{17-C} Let $O^+(x)$ be a precompact pseudo-ordered semiorbit in $\tilde{X}$ such that $\omega(x)$ contains no equilibrium.
Then $\omega(x)$ consists of a nontrivial periodic orbit, that is ordered.
\end{lemma}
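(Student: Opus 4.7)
The plan is to invoke Lemma~\ref{17-B} to collapse its three alternatives to the ordered one, and then upgrade ``$\omega(x)$ is ordered'' to ``$\omega(x)$ is a single nontrivial periodic orbit'' using the equilibrium-free assumption together with the SOP property.

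First, I would rule out alternatives (ii) and (iii) of Lemma~\ref{17-B}. Alternative (ii) requires $\omega(x)\subset E$, which is impossible since $\omega(x)$ is nonempty (by precompactness of $O^+(x)$) and $\omega(x)\cap E=\emptyset$ by hypothesis. For alternative (iii), if $\omega(x)\setminus\tilde B=\emptyset$ then $\tilde B=\omega(x)$ is itself ordered and we are reduced to case (i); otherwise some $p\in\omega(x)\setminus\tilde B$ satisfies $\alpha(p)\subset E$. The flow extension on $\omega(x)$ produces a full orbit of $p$ inside the compact invariant set $\omega(x)$, so $\alpha(p)$ is nonempty and contained in $\omega(x)$, forcing $\alpha(p)\subset\omega(x)\cap E=\emptyset$, a contradiction. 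Thus alternative (i) holds and $\omega(x)$ is ordered (as is also stated directly at the end of Lemma~\ref{17-B}).

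Next I would upgrade orderedness to a periodic orbit. Fix $y\in\omega(x)$; the flow extension supplies a full orbit $O(y)\subset\omega(x)$, and $\omega(y),\alpha(y)$ are nonempty, compact, connected, invariant, ordered subsets of $\omega(x)$ that contain no equilibria. I would combine the SOP property with the chain structure of $\omega(y)$ to show that every orbit in $\omega(y)$ is recurrent: a non-recurrent orbit inside $\omega(y)$ would admit distinct compact invariant $\omega$- and $\alpha$-limits within the ordered chain $\omega(y)$, and the SOP separation of cone-ordered trajectories, together with the absence of equilibria, would force an unordered pair in $\omega(y)$, contradicting orderedness. Hence $y\in\omega(y)$ and $\Phi_T(y)=y$ for some $T>0$, so $O(y)$ is a nontrivial periodic orbit. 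The equality $\omega(x)=O(y)$ then follows because SOP together with orderedness forces $O(y)$ to be both closed and open in the connected equilibrium-free set $\omega(x)$.

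The main obstacle will be the recurrence step $y\in\omega(y)$. We do not assume $C$ is complemented, so the topological conjugacy to a flow on a compact invariant subset of $\mathbb{R}^k$ stated at the end of Lemma~\ref{17-B} is unavailable, and the absence of ``drifting'' orbits inside $\omega(x)$ must instead be extracted directly from the chain structure of $\omega(x)$ and the SOP separation property, without access to a finite-dimensional reduction.
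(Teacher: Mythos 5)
There is a genuine gap. Your first step, using Lemma \ref{17-B} (whose ``In particular'' clause already states that $\omega(x)\cap E=\emptyset$ forces $\omega(x)$ to be ordered) to reduce to the ordered case, is fine. But the heart of the lemma is the upgrade from ``$\omega(x)$ is ordered and equilibrium-free'' to ``$\omega(x)$ is a single nontrivial periodic orbit,'' and your sketch does not supply it. Two steps fail as stated. First, the recurrence argument is only asserted: you do not explain why a non-recurrent orbit inside the ordered set $\omega(y)$ would produce an unordered pair; a set that is ordered with respect to a cone of rank $k$ carries no partial order, so there is no monotone-convergence mechanism to invoke, and the SOP property you appeal to produces order relations rather than contradictions to order. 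Second, and more seriously, even granting recurrence, the inference ``hence $y\in\omega(y)$ and $\Phi_T(y)=y$ for some $T>0$'' is unjustified: recurrence is strictly weaker than periodicity (an irrational linear flow on a torus is minimal, hence recurrent, but has no periodic point), and excluding such non-periodic minimal sets from ordered, equilibrium-free limit sets is precisely the content of the Poincar\'e--Bendixson-type statement you are trying to prove. The closing claim that SOP plus orderedness makes $O(y)$ open in $\omega(x)$, so that $\omega(x)=O(y)$ by connectedness, is likewise asserted without argument. You flag the recurrence step as the main obstacle yourself, which in effect concedes that what you have is a plan rather than a proof.

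For comparison, the paper does not reprove this statement at all: it cites \cite[Theorem C]{F-W-W} and merely remarks that the flow-extension hypothesis on $\omega(x)$ allows that proof to go through for semiflows SOP with respect to a solid $k$-cone. So the appropriate fix is either to invoke that theorem as the paper does, or to genuinely reconstruct its proof; the latter is substantially more involved than the outline you give, and cannot be reduced to the recurrence-plus-clopen argument you propose.
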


\begin{proof} See \cite[Theorem C]{F-W-W}. We should point out that by the flow extension assumption of $\Phi_t$ on $\omega(x)$, this result can also be established for continous semiflows SOP w.r.t. high-rank solid cones.
\end{proof}

\begin{lemma}\label{17-D} Let $O^+(x)$ be a precompact unordered semiorbit. Then $\omega(x)$ is either an equilibrium or an unordered set.
\end{lemma}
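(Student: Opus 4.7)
The plan is to argue by contradiction. Suppose $\omega(x)$ is neither an equilibrium (i.e., a singleton, which by invariance of $\omega(x)$ would automatically lie in $E$) nor an unordered set. Then $\omega(x)$ contains at least two distinct points, and by negating ``unordered'' there must exist a pair $p,q\in\omega(x)$ with $p\neq q$ and $p\thicksim q$. My goal is to manufacture, from this ordered pair in $\omega(x)$, an ordered pair of distinct points on $O^+(x)$ itself, directly contradicting the unorderedness of $O^+(x)$.

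Choose sequences $t_n,s_n\to+\infty$ with $\Phi_{t_n}(x)\to p$ and $\Phi_{s_n}(x)\to q$. Applying the SOP property to the ordered distinct pair $p\thicksim q$ gives a time $T>0$ and open neighborhoods $\mathcal{U}$ of $p$, $\mathcal{V}$ of $q$ such that $\Phi_t(\mathcal{U})\thicksim \Phi_t(\mathcal{V})$ for every $t\geq T$. For $n$ large, $\Phi_{t_n}(x)\in\mathcal{U}$ and $\Phi_{s_n}(x)\in\mathcal{V}$, so
$$\Phi_{t_n+T}(x)=\Phi_T(\Phi_{t_n}(x))\thicksim \Phi_T(\Phi_{s_n}(x))=\Phi_{s_n+T}(x).$$
Thus two points on $O^+(x)$ are ordered; what remains is to certify that they are \emph{distinct} as elements of $\tilde{X}$ for some large $n$.

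This distinctness step is where I would invoke the standing hypothesis that $\Phi_t$ admits a flow extension on the invariant set $\omega(x)$: the extended flow makes $\Phi_T|_{\omega(x)}$ a homeomorphism, hence injective, so $\Phi_T(p)\neq \Phi_T(q)$. By continuity, $\Phi_{t_n+T}(x)\to \Phi_T(p)$ and $\Phi_{s_n+T}(x)\to \Phi_T(q)$, so for all sufficiently large $n$ the two forward orbit points are distinct. Combined with the ordered relation above, this contradicts the assumption that $O^+(x)$ is unordered, completing the proof.

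The principal obstacle is precisely this distinctness step. SOP produces an ordered pair on the forward orbit in a straightforward way, but for a general semiflow $\Phi_T$ need not be injective, and without additional structure one cannot exclude $\Phi_T(p)=\Phi_T(q)$, in which case the ``two'' forward-orbit points might collapse to a single point and the reflexivity of $\thicksim$ would yield no contradiction. The flow extension hypothesis on $\omega(x)$ is exactly the tool that restores injectivity of $\Phi_T$ on the limit set and thereby closes the argument.
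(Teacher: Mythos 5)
Your argument is correct. The paper itself does not spell out a proof here: it simply cites \cite[Theorem D]{F-W-W} and remarks that the proof there survives under the SOP hypothesis, so your write-up is in effect a self-contained reconstruction of that standard argument rather than a different method. The skeleton (negate ``unordered'', extract a distinct ordered pair $p\thicksim q$ in $\omega(x)$, apply the SOP neighborhoods $\mathcal{U},\mathcal{V}$ and push orbit points forward by $T$ to produce an ordered pair on $O^+(x)$) is exactly the classical limit-set argument. Your one genuinely non-routine observation is the distinctness step: under strong monotonicity distinctness would be automatic, since $\Phi_T(p)-\Phi_T(q)\in \mathrm{Int}\,C$ forces $\Phi_T(p)\neq\Phi_T(q)$, but under mere SOP nothing prevents $\Phi_T(p)=\Phi_T(q)$ for a general semiflow, and invoking the standing flow-extension hypothesis on $\omega(x)$ (from Lemma \ref{17-A}, assumed throughout the subsection) to get injectivity of $\Phi_T$ on $\omega(x)$, and then distinctness of $\Phi_{t_n+T}(x)$ and $\Phi_{s_n+T}(x)$ for large $n$ by continuity, is a legitimate and clean way to close that gap. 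Also your preliminary reduction (a singleton $\omega$-limit set is an equilibrium by invariance, so the remaining case is an ordered pair of distinct points) is handled correctly.
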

\begin{proof} See \cite[Theorem D]{F-W-W}. We point out that the proof of \cite[Theorem D]{F-W-W} remains valid even if the assumption $\Phi_t$ being strongly monotone w.r.t. a solid $k$-cone $C$ in \cite{F-W-W} is here weakened as the one $\Phi_t$ being SOP w.r.t. a solid $k$-cone $C$. 
\end{proof}

\begin{lemma}\label{17-L4.1} Assume that $K_1,\,K_2\subset \tilde{X}$ are compact sets such that $K_1\cap K_2=\emptyset$ and $K_1\thicksim K_2$. Then, there are open sets $U,\,V$ and a $T>0$ such that $K_1\subset U$,$K_2\subset V$ and $\Phi_t(U)\thicksim \Phi_t(V)$ for any $t>T$.
\end{lemma}
\begin{proof} See \cite[Lemma 4.1]{F-W-W}, whose proof is also applicable to this lemma.
\end{proof}

\begin{lemma}\label{17-L4.3} Assume that there are $a_1,\,a_2\in \omega(x)$ such that $a_1\neq a_2$, $a_1\thicksim a_2$, and $O^+(x)$ is a precompact positive semiorbit of $\Phi_t$. If there is a sequence 
$\tau_n\rightarrow +\infty$ such that $\Phi_{\tau_n}(a_1)\rightarrow c$ and $\Phi_{\tau_n}(a_2)\rightarrow c$ as $n\rightarrow +\infty$, then either $c$ is an equilibrium or $O(c)$ is nontrivial and ordered.
\end{lemma}

\begin{proof} See \cite[Lemma 4.3]{F-W-W}, whose proof is also applicable to this lemma.
\end{proof}

\begin{lemma}\label{order-rela-betw-ome} Let $x,y\in \tilde{X}$ be given such that $x\thicksim y$ with $x\neq y$ and $O^+(x)$, $O^+(y)$ are precompact. Then the following results hold:
\item[(i)] If $\omega(x)\cap \omega(y)=\emptyset$, then $\omega(x)\thicksim \omega(y)$;
\item[(ii)] If $\omega(x)\cap \omega(y)\neq\emptyset$ and $\omega(x)\setminus \omega(y)\neq\emptyset$, then there is a full-orbit $O(z)\subset \omega(x)\cap \omega(y)$ such that $\omega(x)\thicksim \overline{O(z)}$. Consequently, $O^+(x)$ is a pseudo-ordered semiorbit;
\item[(iii)] If $\omega(x)=\omega(y)$, then $\omega(x)$ either consists of equilliria, or contains a nontrivial full-orbit such that the full-orbit being an ordered set.
\end{lemma}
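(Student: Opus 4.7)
The plan is to combine (a) monotonicity of $\Phi_t$, which gives $\Phi_t(x)-\Phi_t(y)\in C$ for every $t\ge 0$; (b) precompactness of $O^+(x), O^+(y)$ for extracting convergent subsequences; (c) closedness of $C$ for passing order relations to the limit; (d) the flow extension on $\omega(x)$ and $\omega(y)$, which lets us shift within the $\omega$-limit sets in both directions of time; and (e) the SOP property, to propagate the ordering beyond what monotonicity alone can give.

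For (i), fix $a\in\omega(x)$ and $b\in\omega(y)$. The ``easy'' half is this: choose $t_n\to\infty$ with $\Phi_{t_n}(x)\to a$ and, by precompactness of $O^+(y)$, pass to a subsequence so $\Phi_{t_n}(y)\to\beta\in\omega(y)$; monotonicity and closedness of $C$ then give $a\thicksim\beta$. To upgrade this diagonal relation to the fixed pair $(a,b)$, I would use the flow extension on $\omega(y)$: for any $s\ge 0$, $\tilde\Phi_{-s}(b)\in\omega(y)$, so there is $\sigma_m\to\infty$ with $\Phi_{\sigma_m}(y)\to\tilde\Phi_{-s}(b)$, and a further subsequence yields $\Phi_{\sigma_m}(x)\to a^*(s)\in\omega(x)$ with $\Phi_s(a^*(s))\thicksim b$. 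Finally, I would apply SOP at the ``advanced'' pair $(\Phi_\tau(x),\Phi_\tau(y))$ for large $\tau$, producing open neighborhoods $\mathcal{U}_\tau\ni\Phi_\tau(x)$, $\mathcal{V}_\tau\ni\Phi_\tau(y)$ with $\Phi_t(\mathcal{U}_\tau)\thicksim\Phi_t(\mathcal{V}_\tau)$ for $t\ge T_\tau$; by the uniform continuity of $s\mapsto\Phi_s(y)$ on $[0,\infty)$ (granted by precompactness), $\Phi_{\tau+h}(y)\in\mathcal{V}_\tau$ for small $|h|$, so that $\Phi_{\tau_1}(x)\thicksim\Phi_{\tau_2}(y)$ for $\tau_1,\tau_2$ large with $|\tau_1-\tau_2|$ bounded. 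Sliding $\tau_1\to\infty$ along $t_n$ to approximate $a$ and sliding $\tau_2$ via the flow extension on $\omega(y)$ to approximate $b$ then delivers $a\thicksim b$ in the limit.

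For (ii), pick $z\in\omega(x)\cap\omega(y)$; by flow-extension invariance of both $\omega$-limit sets, $O(z)\subset\omega(x)\cap\omega(y)$, and $\overline{O(z)}$ is a compact invariant subset of each. The argument from (i), applied to pairs $(a,z')$ with $a\in\omega(x)$ and $z'\in\overline{O(z)}\subset\omega(y)$, yields $\omega(x)\thicksim\overline{O(z)}$. For the pseudo-ordered assertion, use $\omega(x)\setminus\omega(y)\ne\emptyset$ to pick $a\in\omega(x)\setminus\overline{O(z)}$ and any $z'\in\overline{O(z)}$: these are distinct points of $\omega(x)$ with $a\thicksim z'$; taking $s_n<t_n\to\infty$ with $\Phi_{s_n}(x)\to z'$, $\Phi_{t_n}(x)\to a$ and invoking closedness of $C$ produces $\Phi_{s_n}(x)\thicksim\Phi_{t_n}(x)$ for large $n$, so $O^+(x)$ is pseudo-ordered. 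For (iii), with $\omega(x)=\omega(y)$, the construction in (i) gives, for every $a\in\omega(x)$, some $\beta\in\omega(x)$ with $a\thicksim\beta$. If $\omega(x)\subset E$ we are done; otherwise pick $a\in\omega(x)\setminus E$ and, by sliding along the flow extension on $\omega(x)$, arrange $\beta\ne a$, making $O^+(a)$ pseudo-ordered. Lemma~\ref{17-A} applied to $O^+(a)$ then yields a nontrivial full-orbit in $\omega(a)\subset\omega(x)$ whose closure is ordered.

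The hardest step is the time-synchronization in part (i): bridging the independent approximating sequences for $a\in\omega(x)$ and $b\in\omega(y)$ requires simultaneously using SOP (to ``thicken'' the pointwise comparison into open neighborhoods that tolerate small time-slippage) and the flow extension on $\omega(y)$ (to slide base points within $\omega(y)$ so the shifted sequences align). This delicate matching is the technical crux; monotonicity plus closedness alone produces only the ``diagonal'' pairs $(a,\beta)\in\omega_J(x,y)$, and the SOP property is precisely what is needed to sweep out the remaining pairs in $\omega(x)\times\omega(y)$.
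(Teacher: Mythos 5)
Your raw materials are the right ones (monotonicity plus closedness for the diagonal limit pairs, the flow extension to move inside the limit sets, SOP to thicken pointwise order), but the central step of (i) is not actually closed. Applying SOP at the on-orbit pair $(\Phi_\tau(x),\Phi_\tau(y))$ only tolerates a time-slippage $|h|\le h_\tau$ small enough that $\Phi_{\tau+h}(y)$ stays in $\mathcal{V}_\tau$; this bound is small and $\tau$-dependent, so it cannot bridge an arbitrary pair $(a,b)\in\omega(x)\times\omega(y)$, whose visiting times along the two orbits need not be synchronizable within any bounded offset. Your closing sentence, ``sliding $\tau_1$ and $\tau_2$ delivers $a\thicksim b$ in the limit,'' is precisely the missing argument, not a proof of it. The paper closes this gap with a different mechanism: for the diagonal pair $z_1\in\omega(x)\setminus\omega(y)$, $z_2\in\omega(y)$ it introduces $\Gamma(z_1,z_2)=\sup\{t\ge 0:\ z_1\thicksim\Phi_s(z_2)\ \text{for all}\ |s|\le t\}$, observes $\Gamma(\Phi_t(x),\Phi_t(y))\le\Gamma(z_1,z_2)$ along the orbit, and proves $\Gamma(z_1,z_2)=+\infty$ by contradiction, applying the point-versus-compact-set sharpening of SOP (\cite[Lemma 4.1]{F-W-W}) to the compact arc $\{\Phi_s(z_2):|s|\le\Gamma(z_1,z_2)\}$ to lengthen the ordered arc by $\epsilon$ at advanced times; this yields $z_1\thicksim\overline{O(z_2)}$, which is then upgraded via the same separation lemma and invariance to $\omega(x)\thicksim\overline{O(z_2)}$, and from there to (i) or to the orbit $O(z)$ of (ii). Nothing in your sketch plays the role of this unbounded-slippage argument, and your (ii) inherits the gap because it invokes ``the argument from (i)'' for arbitrary pairs $(a,z')$. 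A smaller error of the same kind: from $a\thicksim z'$ you deduce $\Phi_{t_n}(x)\thicksim\Phi_{s_n}(x)$ for large $n$ ``by closedness of $C$''; closedness passes order to limits, never back to approximants, so here you must use SOP (as the paper implicitly does) to transfer the ordering onto $O^+(x)$.

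Part (iii) has a second, independent gap. The diagonal construction gives, for each $a\in\omega(x)$, some $\beta\in\omega(x)$ with $a\thicksim\beta$, but possibly $\beta=a$ for every admissible subsequence, and ``sliding along the flow extension to arrange $\beta\ne a$'' has no justification: applying $\tilde\Phi_s$ to a degenerate pair keeps it degenerate, and changing the base point only produces other, possibly degenerate, pairs. The paper splits exactly along this line: if some simultaneous limit yields $z_1\ne z_2$, then $z_1\thicksim z_2$ and SOP makes $O^+(x)$ (not $O^+(a)$) pseudo-ordered, after which Lemma~\ref{17-A} applies to $\omega(x)$; if every simultaneous limit is degenerate, it invokes the analogue of \cite[Lemma 4.3(i)]{F-W-W}, which asserts that such a common limit $z_1$ is either an equilibrium or has $\overline{O(z_1)}$ nontrivial and ordered. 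Your proposal contains no substitute for this second ingredient. Note also that even if you had $\beta\ne a$ with $a\thicksim\beta$, this would not make $O^+(a)$ pseudo-ordered, since $\beta$ need not lie on $O^+(a)$; the ordered pair must be transferred to an actual semiorbit (the paper uses $O^+(x)$) before Lemma~\ref{17-A} can be invoked.
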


\begin{proof} {\rm The proof of (i) and (ii)}:  Note that assumptions in (i) and (ii) are two different subcases under the assumption $\omega(x)\setminus \omega(y)\neq\emptyset$. We firstly deduce conclusions from the assumption $\omega(x)\setminus \omega(y)\neq\emptyset$. 

Given any $z_1\in \omega(x)\setminus \omega(y)$, there is an increasing sequence $\{t_n\}\subset \mathbb{R}^+$ such that $\lim\limits_{n\rightarrow +\infty}t_n=+\infty$ and $\lim\limits_{n\rightarrow +\infty}\Phi_{t_n}(x)=z_1$.  One can suppose that $\lim\limits_{n\rightarrow +\infty}\Phi_{t_n}(y)=z_2\in\omega(y)$ without loss of generality. Clearly, $z_1\notin \overline{O(z_2)}$ and $z_1\thicksim z_2$ because of $z_1\in \omega(x)\setminus \omega(y)$ and $\Phi_t$ being SOP. Let 

$$\Gamma(z_1,\,z_2)=\sup\{t\geq 0:\,z_1\thicksim\Phi_s(z_2)\,\text{for all}\,s\in[-t,t] \}$$ and $h(t)=\Gamma(\Phi_t(x),\Phi_t(y))$ for any $t\geq 0$. By utilizing the SOP property of $\Phi_t$ again, we have $\Gamma(\Phi_{t_n}(x), \Phi_{t_n}(y))\leq \Gamma(z_1,\,z_2)$ for any $n\geq 0$, and more,  $\Gamma(\Phi_{t}(x), \Phi_{t}(y))\leq \Gamma(z_1,\,z_2)$ for any $t>0$.

We assert that $\Gamma(z_1, z_2)=+\infty$. We prove it by using contradiction. Suppose that $\Gamma(z_1, z_2)<+\infty$. Let $K=\{\Phi_s(z_2): s\in[-\Gamma(z_1,z_2),\,\Gamma(z_1,z_2)]\}$. Clearly, $K\subset \omega(y)$ is compact because of the compactness of $\overline{O^+(y)}$. Then, $z_1\notin K$. By virtue of Lemma \ref{17-L4.1}, there exist open neighborhoods $U$ of $z_1$, $V$ of $K$ and a time $T>0$ such that $U\cap V=\emptyset$ and $\Phi_t(U)\thicksim \Phi_t(V)$ for all $t\geq T$. Thus, there exist $N\in\mathbb{N}^+$ and $\epsilon>0$ such that $\Phi_{t_N}(x)\in U$ and $\Phi_{s+l+t_N}(y)\in V$ for any $s\in [-\Gamma(z_1,z_2),\,\Gamma(z_1,z_2)]$ and $l\in[-\epsilon,\epsilon]$. Consequently, $\Phi_{t_N+T}(x)\thicksim \Phi_{t_N+T+s}(y)$ for any $s\in [-\Gamma(z_1,z_2)-\epsilon, \Gamma(z_1,z_2)+\epsilon]$. It follows that $\Gamma(\Phi_{t_N+T}(x), \Phi_{t_N+T}(y))\geq \Gamma(z_1,\,z_2)+\epsilon$, a contradiction. Therefore, $\Gamma(z_1, z_2)=+\infty$, which implies that $z_1\thicksim \overline{O(z_2)}$.

Now, we claim that $\omega(x)\thicksim \overline{O(z_2)}$.  By utilizing Lemma \ref{17-L4.1} again, we can find open neighborhoods $\tilde{U}$ of $z_1$, $\tilde{V}$ of $\overline{O(z_2)}$ and $\tilde{T}>0$ such that $\tilde{U}\cap \tilde{V}=\emptyset$ and $\Phi_t(\tilde{U})\thicksim \Phi_t(\tilde{V})$ for all $t\geq \tilde{T}$. It then follows from the invariance of $\overline{O(z_2)}$ w.r.t. $\Phi_t$ and the SOP property of $\Phi_t$ that $\Phi_t(x)\thicksim \overline{O(z_2)}$ for all sufficiently large $t$. Hence, $\omega(x)\thicksim \overline{O(z_2)}$.

If $\omega(x)\cap\omega(y)=\emptyset$, then $\omega(x)\cap \overline{O(z_2)}=\emptyset$. By the similiar arguments in the last paragraph, we have $\omega(x)\thicksim \omega(y)$. Thus, we complete the proof of (i).

If $\omega(x)\cap\omega(y)\neq\emptyset$, then one of the following two alternatives holds: (1) $\overline{O(z_2)}\subset \omega(x)$; (2) $\exists\,z_3\in \overline{O(z_2)}\setminus\omega(x)$. Under the assumption (1), the conclusion of (ii) is directly implied by $\omega(x)\thicksim \overline{O(z_2)}$. Under the assumption (2), it is clear that $\omega(x)\thicksim z_3$ by utilizing $\omega(x)\thicksim \overline{O(z_2)}$ again. By repeating the arguments for proving $\omega(x)\thicksim \overline{O(z_2)}$ with the conditions $z_1\thicksim\overline{O(z_2)}$ and $z_1\in \omega(x)\setminus\overline{O(z_2)}$, we get $\omega(x)\thicksim\omega(y)$. Consequently, $\omega(x)\thicksim\omega(x)\cap\omega(y)$. Recall that $\Phi_t$ admits a flow extension on each nonempty $\omega$-limit set. Take any $\tilde{z}\in\omega(x)\cap\omega(y)$. The invariance of $\omega(x)$ and $\omega(y)$ w.r.t. $\Phi_t$ implies $\omega(x)\thicksim\omega(\tilde{z})\subset \omega(x)\cap\omega(y)$. Take $z\in\omega(\tilde{z})$. Then $O(z)\subset \omega(x)\cap\omega(y)$ is the desired full-orbit. Therefore, we have completed the proof of (ii).

{\rm (iii)} For the case $\omega(x)$ consisting of equlibria, we have done. So, we only need to consider the case there is a $z_1\in \omega(x)\setminus E$. Clearly, there is an increasing sequence $\{\tau_n\}\subset \mathbb{R}^+$ such that $\lim\limits_{n\rightarrow +\infty}\tau_n=+\infty$, $\lim\limits_{n\rightarrow +\infty}\Phi_{\tau_n}(x)=z_1$ and $\lim\limits_{n\rightarrow +\infty}\Phi_{\tau_n}(y)=z_2$ for some $z_2\in \omega(y)=\omega(x)$. 

If $z_1\neq z_2$, then $z_1\thicksim z_2$, and hence, the SOP property of $\Phi_t$ w.r.t. $C$ and the assumption $\Phi_t$ admitting a flow extension on $\omega(x)$ imply that $O^+(x)$ is a pseudo-ordered semiorbit. By Lemma \ref{17-A}, each full-orbit in $\omega(x)$ is an ordered set and it then follows that $\overline{O(z_1)}$ is nontrivial and ordered. If $z_1= z_2$, it then follows from Lemma \ref{17-L4.3} that $O(z_1)$ is nontrivial and ordered. Thus, we have proved (iii).

Therefore, we have completed the proof. 
\end{proof}

\subsection{The delayed equation and its generated semiflow $F_t$}
Let $X=C[-1,0]$ be equipped with the maximal norm $\norm{\cdot}$, and $X^1=C^{1}[-1,0]$ be equipped with the $C^1$-norm $\norm{\cdot}_{X^1}$. For any $\psi\in X^{1}$, $\psi^{\prime}(-1)$ and $\psi^{\prime}(0)$ represent the right and left derivatives of $\psi$ respectively. Denoted by $x^{\phi}$ the unique solution of Equation (\ref{E:DDE-sys-mu}) with the initial point $\phi\in X$. Define the point $x^{\phi}_t\in X\,(t\geq 0)$ by $x^{\phi}_{t}(\theta)=x^{\phi}(t+\theta),\,\theta\in[-1,0]$ (Sometimes, it is also denoted by $x_t$ when we need not to point out or tag the initial point of the solution $x$ of (\ref{E:DDE-sys-mu})). A point $\tau\in \mathbb{R}$ is called a {\it zero} of a solution $x$ of (\ref{E:DDE-sys-mu}) if $x(\tau)=0$.

\begin{lemma}\label{global-solution} There exists a constant $b(\mu,f)\in(0,+\infty)$ such that $\lim\sup\limits_{t\rightarrow +\infty}\norm{x^{\phi}_t}\leq b(\mu,f)$ for all $\phi\in X$.
\end{lemma}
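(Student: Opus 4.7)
The plan is to establish the bound by first reducing to the case $\sup f =: M < +\infty$; the alternative hypothesis $\inf f > -\infty$ is handled by the symmetry $\tilde x(t) := -x(t)$, $\tilde f(x) := -f(-x)$, which preserves all the hypotheses on $f$ and maps one case to the other. Throughout I would work with the variation-of-parameters representation
\[
x^\phi(t) = e^{-\mu(t-t_0)}\,x^\phi(t_0) + \int_{t_0}^t e^{-\mu(t-s)}\, f(x^\phi(s-1))\,ds, \qquad 0 \leq t_0 \leq t,
\]
and use that $x^\phi \in C^1([0,+\infty))$, in fact $C^2([1,+\infty))$.

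When $\mu > 0$, the bound $f \leq M$ applied to the integral representation immediately gives $\limsup_{t \to +\infty} x^\phi(t) \leq M/\mu$. Once this holds eventually, the strict monotonicity of $f$ yields $f(x^\phi(s-1)) \geq f(M/\mu + \varepsilon)$ for all large $s$ and any $\varepsilon > 0$; a second application of the same formula then produces $\liminf_{t \to +\infty} x^\phi(t) \geq f(M/\mu)/\mu$. This already delivers the claim with, for instance, $b(\mu,f) := \max\{M/\mu,\,|f(M/\mu)|/\mu\}$.

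The essential case is $\mu = 0$, where exponential damping is absent and boundedness must be extracted from the delayed negative feedback alone. First, $x^\phi$ cannot tend monotonically to $+\infty$: otherwise $x^\phi(t-1) > 1$ eventually and hence $(x^\phi)'(t) = f(x^\phi(t-1)) \leq f(1) < 0$ eventually, contradicting $x^\phi \to +\infty$. Suppose now, for contradiction, that $\limsup x^\phi > M$, and choose $K \in (M, \limsup x^\phi)$. Then in every sufficiently late connected component of $\{t : x^\phi(t) > K\}$ there is an interior local maximum $t_n$ with $x^\phi(t_n) > K$, at which $(x^\phi)'(t_n) = 0$, i.e. $f(x^\phi(t_n-1)) = 0$; the strict monotonicity of $f$ forces $x^\phi(t_n-1) = 0$, whence
\[
x^\phi(t_n) = x^\phi(t_n-1) + \int_{t_n-1}^{t_n} f(x^\phi(s-1))\,ds = \int_{t_n-2}^{t_n-1} f(x^\phi(u))\,du \leq M,
\]
contradicting $x^\phi(t_n) > K > M$. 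Hence $\limsup x^\phi \leq M$. Applying the same extremum argument at interior local minima, and using the just-established upper bound $x^\phi(u) \leq M$ eventually, gives $\liminf x^\phi \geq f(M)$. Combining these two bounds with $\norm{x^\phi_t} = \sup_{s \in [t-1,t]}|x^\phi(s)|$ yields the required uniform estimate. The main obstacle is exactly this $\mu = 0$ case: quantitative linear damping is unavailable, and must be replaced by the structural identification of local extrema of $x$ with zeros of $f$ (and hence with zeros of $x$ one delay-unit earlier), which is what converts the boundedness of $f$ into the eventual boundedness of the solution.
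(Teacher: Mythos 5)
Your proof is correct in substance and, in the hard case, rests on exactly the same mechanism as the paper's: at an interior local extremum $t$ one has $f(x^{\phi}(t-1))=\mu x^{\phi}(t)$, which (for $\mu=0$) pins a zero of $x^{\phi}$ at $t-1$, and integrating $f\le \sup f$ over an interval of length at most one delay bounds the extremal value; the paper's proof of Lemma \ref{global-solution} does precisely this, for all $\mu\ge 0$ at once, getting $b(\mu,f)=\sup f-f(\sup f)$ (at a positive local maximum $\overline t$ the term $-\mu x\le 0$ on the positivity interval, so no separate treatment of $\mu>0$ is needed). Where you differ is in organization: you dispose of $\inf f>-\infty$ by the symmetry $\tilde x=-x$, $\tilde f(\xi)=-f(-\xi)$ (the paper repeats the argument), you treat $\mu>0$ separately by variation of parameters (simpler there, and the $\mu$-dependence of your constant is harmless since $b$ may depend on $\mu$), and you run the $\mu=0$ case as a contradiction on $\limsup$/$\liminf$ via components of superlevel sets, whereas the paper splits on whether the zero set of $x^{\phi}$ is bounded (eventually one-signed solutions are shown to converge to $0$) or unbounded. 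One loose end you should tighten: your step 1 only excludes that $x^{\phi}$ tends \emph{monotonically} to $+\infty$, but your claim that every sufficiently late component of $\{t: x^{\phi}(t)>K\}$ contains an interior local maximum also requires excluding an unbounded component, i.e.\ that $x^{\phi}(t)>K$ for all large $t$; this is immediate from the inequality you already use ($x^{\prime}(t)=f(x^{\phi}(t-1))\le f(K)<0$ eventually forces $x^{\phi}\to-\infty$), and the analogous remark applies to sublevel sets in the $\liminf$ step. With that one line added, your argument is complete and covers the paper's Type (a) solutions (eventually one-signed, no late extrema) automatically, which is a mild economy over the paper's explicit case split.
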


\begin{proof} Clearly, for any $\phi\in X$, the segement $x^{\phi}_t\,(t\geq 0)$ of the solution $x^{\phi}$ of Equation $(\ref{E:DDE-sys-mu})$ is contained in $X$. Each $x^{\phi}$ has one of the following two types:  \item[Type (a).]  All zeros in $[-1,+\infty)$ of $x^{\phi}$ are contained in a bounded interval; \item[Type (b).] The set of zeros in $[-1,+\infty)$ of $x^{\phi}$ is unbounded.

{\it The proof for Type {\rm(}a{\rm)}}: Clearly, there is a time $T_0>1$ such that $x^{\phi}$ has no zero in $[T_0-1,\infty)$ and hence, $\text{sign}(x^{\phi}(t))=\text{sign}(x^{\phi}(T_0))$ for any $t\geq T_0-1$. Without loss of generality, suppose $x^{\phi}(t)>0$ for any $t\geq T_0-1$. It follows from Equation ($\ref{E:DDE-sys-mu}$) that $x^{\prime}(t)<0$ for any $t\geq T_0$. So, there is a $c\geq 0$ such that $\lim\limits_{t\rightarrow \infty}x^{\phi}(t)=c$ and $\lim\limits_{t\rightarrow \infty}(x^{\phi})^{\prime}(t)=-\mu c+f(c)$. If $c>0$, then $-\mu c+f(c)<0$ and hence $\lim\limits_{t\rightarrow\infty} x^{\phi}(t)=-\infty$, a contradiction. Therefore, $c=0$.

{\it The proof for Type {\rm(}b{\rm)}}: We shall show that there exists a constant $b(\mu,f)\in(0,\infty)$ such that 

$$\lim\sup\limits_{t\rightarrow \infty}\norm{x^{\phi}_t}\leq b(\mu,f),$$ where $b(\mu,f)$ is independent on $\phi$. We prove it by estimating the upper and lower bounds of $x^{\phi}$ on a interval $J\subset [-1,+\infty)$ such that $\sup J=+\infty$ under one of the two different additional assumptions: 1) $\sup f<+\infty$, 2) $-\infty<\inf f$.

1) Proof with the assumption $\sup f<+\infty$. Since the set of zeros in $[-1,+\infty)$ of $x^{\phi}$ is unbounded, there must be a $t$ such that $t\geq n$ and

\begin{equation}\label{extrema}0=(x^{\phi})^{\prime}(t)=-\mu x^{\phi}(t)+f(x^{\phi}(t-1))\end{equation} for any $n\in\mathbb{N}^+$. Together with (\ref{E:DDE-sys-mu}), one of the following two alternatives holds: either,
\item[(a1)] there is a $t_0>-1$ such that $x^{\phi}(s)=0$ for all $s\geq t_0$; or otherwise, 

\item[(b1)] for any $T>1$, there is a local maximum $\overline{t}>T$ and a local minimum $\tilde{t}>T$ such that $x^{\phi}(\overline{t})>0$ and $x^{\phi}(\tilde{t})<0$.   

\noindent It suffices to prove the boundedness of $x^{\phi}$ for the case (b1). Given a $T=T_0>1$. Recall that $f^{\prime}<0$. Clearly, $\overline{t}$ and $\tilde{t}$ satisfy (\ref{extrema}). Then, $x^{\phi}(\overline{t})>0$ implies $x^{\phi}(\overline{t}-1)<0$ for any local maximium $\overline{t}>T_0$ with $x^{\phi}(\overline{t})>0$. It follows that there exists a largest zero $\tau\in(\overline{t}-1,\overline{t})$ of $x^{\phi}$ such that 

\begin{equation}\label{upper-bound}0<x^{\phi}(\overline{t})=\int_{\tau}^{\overline{t}} (x^{\phi})^{\prime}(s)ds\leq \int_{\tau}^{\overline{t}} f(x^{\phi}(s-1))ds \leq \sup f<+\infty.\end{equation} Together with (\ref{extrema}), $x^{\phi}(\tilde{t})<0$ implies $x^{\phi}(\tilde{t}-1)>0$ for any local minimium $\tilde{t}>\overline{t}_{T_0}+2$ with $x(\tilde{t})<0$, where $\overline{t}_{T_0}$ is the smallest local maximium belonging to the interval $(T_0,\,+\infty)$. Consequently, there exists a largest zero $\tilde{\tau}\in (\tilde{t}-1,\tilde{t})$ such that \begin{equation}\begin{aligned} &x(\tilde{t})=\int_{\tilde{\tau}}^{\tilde{t}}x^{\prime}(s)ds=\int_{\tilde{\tau}}^{\tilde{t}}-\mu x(s)+f(x(s-1))ds\\ 
&>\int_{\tilde{\tau}}^{\tilde{t}} f(x(s-1))ds\overset{(\ref{upper-bound})+(f^{\prime}<0)}{\geq}\int_{\tilde{\tau}}^{\tilde{t}}f(\sup f) ds>f(\sup f).\end{aligned}\end{equation}  Thus, one can take $b(\mu,f)=\sup f-f(\sup f)$ when the condition $\sup f<+\infty$ holds.

2) Proof with the assumption $-\infty<\inf f$. The arguments are similiar to the case 1). Equation (\ref{extrema}) still holds for any local extrema $t\geq 1$.  Moreover, one of the following two alternatives holds: either,
\item[(a2)] there is a $t_0>-1$ such that $x^{\phi}(s)=0$ for all $s>t_0$; or otherwise, 

\item[(b2)] for any $T>1$, there is a local minimum $\tilde{t}>T$ and a local maximum $\overline{t}>T$ such that $x^{\phi}(\tilde{t})<0$ and $x^{\phi}(\overline{t})>0$. 

It suffices to prove the boundedness of $x^{\phi}$ for the case $(b2)$. Given a $T=T_0>1$. We can obtain $x(\tilde{t})\geq\inf f$ for any local minimium $\tilde{t}>T_0$, and more, $x(\overline{t})\leq f(\inf f)$ for any local maximium $\overline{t}>\tilde{t}_{T_0}+2$, where $\tilde{t}_{T_0}$ is the smallest local minimium belonging to the interval $(T_0,+\infty)$. Thus, one can take $b(\mu,f)=f(\inf f)-\inf f$ when the condition $-\infty<\inf f$ holds.
\vskip 3mm
Therefore, we have completed the proof.
\end{proof}

\begin{rem} By virtue of Lemma \ref{global-solution}, it is clear that for any initial point $\phi\in X$, the solution $x^{\phi}$ is a function with its domain containing $[-1,+\infty)$.
\end{rem}

\begin{definition}\label{SO}{\rm A solution $x$ of (\ref{E:DDE-sys-mu}) is called {\it slowly oscillating {\rm(}with respect to the delay time 1{\rm)}} if either there is no two different zeros of $x$ in $[-1,+\infty)$, or $\abs{\tau_2-\tau_1}>1$ for each pair of its different zeros ($\tau_1,\tau_2)\in [-1,+\infty)\times [-1,+\infty)$ when $x$ has at least two different zeros in $[-1,+\infty)$. Moreover, $x$ is called {\it eventually slowly oscillating {\rm(}with respect to the delay time 1{\rm)}} if there is an interval $J\subset [-1,+\infty)$ with $\sup J=+\infty$ such that either there is no two different zeros of $x$ in $J$, or $\abs{\tau_2-\tau_1}>1$ for each pair of its different zeros ($\tau_1,\tau_2)\in J\times J$ when $x$ has at least two different zeros in $J$; otherwise, $x$ is called {\it rapidly oscillating}. }
\end{definition}

Let $\mathcal{S}$ be the set of all nonzero functions in $X$ with at most one sign change:

$$\mathcal{S}=\{\phi\in X\setminus\{0\}:\exists\,\,j\in \mathbb{N},\tau\in[-1,0]\,\,\text{s.t.}\,\,(-1)^{j}\phi\leq 0 \,\,\text{on}\,\,[-1,\tau],\,\,\text{and}\,\, (-1)^{j}\phi\geq 0 \,\, \text{on}\,\,[\tau,0]\}.$$ Clearly, the closure of $\mathcal{S}$ in $X$ is $\text{Cl}_{X}\mathcal{S}=\mathcal{S}\cup \{0\}$. Denote $\mathcal{S}^{1}=\mathcal{S}\cap X^1$. Moreover, the segment $x_t$ of a slowly oscillating solution $x$ belongs to the set $\mathcal{S}$. Let $\mathcal{D}_{ES}$ consist of all initial points in $X$ such that the corresponding solution of (\ref{E:DDE-sys-mu}) is eventually slowly oscillating.

\begin{rem}It follows from Lemma \ref{Int-bounary-S1} and \ref{monotonicity}(i) that the following are equivalent for a solution $x$ of (\ref{E:DDE-sys-mu}):

{\rm (i)} $x$ is rapidly oscillating;

{\rm (ii)} For any $n\in \mathbb{N}^+$, there is a zero $\tau$ of $x$ in its domain such that $\tau>n$; and more, one can find a zero $\tilde{\tau}$ of $x$ in its domain such that $0<\tilde{\tau}-\hat{\tau}<1$ for each zero $\hat{\tau}$ of $x$ in its domain.
\end{rem}
\vskip 3mm

By Lemma \ref{global-solution}, Equation (\ref{E:DDE-sys-mu}) generates the semiflow $F: \mathbb{R}^+\times X\rightarrow X$, which is defined by 

$$F(t,\phi)=x_{t}^{\phi}, (t,\phi)\in \mathbb{R}^+\times X.$$ For simplicity, denote $F(t,\cdot)=F_t$ for any $t\geq0$. Together with the continuity of solutions of Equation (\ref{E:DDE-sys-mu}) and \cite[Theorem 2.2-2.3 in Chapter 2]{Hale-Lun}, one can obtain the continuity of $F$. The following integral equation is deduced from (\ref{E:DDE-sys-mu}): 

\begin{equation}\label{int-eq}x^{\phi}(t)=e^{-\mu t}\phi(0)+\int_{0}^te^{-\mu(t-s)}f(x^{\phi}(s-1))ds\end{equation} for any $(t,\phi)\in \mathbb{R}^+\times X$. Then, 

\begin{equation}\label{reprentation-semiflow}  F_t(\phi)(0)=F_1(\phi)(t-1)=e^{-\mu t}\phi(0)+\int_{0}^t e^{-\mu(t-s)}f(\phi(s-1))ds \end{equation} for any $(t,\phi)\in [0,1]\times X$. The linear variational equation along the unique solution $x^{\phi}$ of (\ref{E:DDE-sys-mu}) with the initial point $\phi\in X$ is 

\begin{equation}\label{E:LVE} v^{\prime}=-\mu v(t)+f^{\prime}(x^{\phi}(t-1))v(t-1) .\end{equation} Let $v^{\xi}_{\phi}$ be the solution of (\ref{E:LVE}) with the intial data $\xi \in X$. Define $(v^{\xi}_{\phi})_t(\cdot)$ by $(v^{\xi}_{\phi})_t(\theta)=v^{\xi}_{\phi}(t+\theta)$ with $\theta\in[-1,0]$. Recall that $f$ is $C^1$-smooth. Then, $f^{\prime}$ is uniformly continuous on each bounded interval $J\subset\mathbb{R}$. It then follows from (\ref{int-eq})-(\ref{reprentation-semiflow}) that for any $(t,\phi)\in [0,1]\times X$, the $\phi$-derivative of $F_t$, denoted by $D_{\phi} F_t$, exists and 

\begin{equation}\label{DFt}D_{\phi}F_t\xi=(v^{\xi}_{\phi})_t\,\,\text{for any}\,\,\xi\in X.\end{equation} Moreover, (\ref{DFt}) holds for any $(t,\phi,\xi)\in\mathbb{R}^+\times X\times X$, and there exists the following chain-rule: 

\begin{equation}\label{Chain-DFt} D_{\phi}F_{t_1+t_2}=D_{F_{t_1}(\phi)}F_{t_2}\circ D_{\phi}F_{t_1}\,\,\text{for any}\,\,(t_1, t_2, \phi)\in \mathbb{R}^+\times\mathbb{R}^+\times X.\end{equation}

\section{Fundamental properties of the semiflow $F_t$}

\begin{lemma}\label{Le: embeding} $F_t(X)\subset X^1$ for any $t\geq 1$.
\end{lemma}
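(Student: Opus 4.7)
The plan is to read off the $C^1$-regularity of the segment $F_t(\phi)$ directly from the integral representation (\ref{int-eq})--(\ref{reprentation-semiflow}) already established in the paper. The crucial observation is that for any $s\ge 0$, the delayed argument $s-1$ lies in $[-1,\infty)$, where $x^\phi$ is already continuous (it equals $\phi$ on $[-1,0]$ and is given by (\ref{int-eq}) on $[0,\infty)$). Hence the composition $s\mapsto f(x^\phi(s-1))$ is continuous on $[0,\infty)$.

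First, I would show that $x^\phi$, viewed as a function on $[0,\infty)$, belongs to $C^1[0,\infty)$ (with a right derivative at $0$). Differentiating the integral equation (\ref{int-eq}) with respect to $t$ gives
\begin{equation*}
(x^\phi)'(s) \;=\; -\mu\, x^\phi(s) + f\bigl(x^\phi(s-1)\bigr), \qquad s\ge 0.
\end{equation*}
Since $x^\phi$ is continuous on $[-1,\infty)$ and $f$ is continuous, the right-hand side is a continuous function of $s$ on $[0,\infty)$. Therefore $(x^\phi)'$ is continuous on $[0,\infty)$, which is precisely the statement that $x^\phi\in C^1[0,\infty)$ (the derivative at $s=0$ being one-sided, in accordance with the convention for $\psi'(-1)$ fixed at the beginning of this subsection).

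Now let $t\ge 1$. By definition, $F_t(\phi)(\theta)=x^\phi(t+\theta)$ for $\theta\in[-1,0]$, so as $\theta$ ranges over $[-1,0]$, the argument $s=t+\theta$ ranges over $[t-1,t]\subset[0,\infty)$, because $t-1\ge 0$. Thus $F_t(\phi)$ is, up to the affine reparametrization $\theta\mapsto t+\theta$, the restriction of the $C^1$ function $x^\phi|_{[0,\infty)}$ to the closed interval $[t-1,t]$. Consequently $F_t(\phi)\in C^1[-1,0]=X^1$, as required.

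There is no genuine obstacle here; the lemma is essentially a one-step smoothing property built into the DDE. The only mild subtlety concerns the boundary case $t=1$, where $\theta=-1$ corresponds to $s=0$: at this endpoint the derivative of $F_1(\phi)$ at $-1$ should be understood as the right derivative $-\mu\phi(0)+f(\phi(-1))$, exactly matching the convention for $\psi'(-1)$ adopted in the definition of $X^1$.
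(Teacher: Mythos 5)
Your proof is correct and follows essentially the same route as the paper, which simply asserts that the lemma ``is directly implied by Equation (\ref{E:DDE-sys-mu})'': your argument is exactly the elaboration of that one-step smoothing property, including the correct treatment of the one-sided derivative at $\theta=-1$ when $t=1$.
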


\begin{proof} It is directly implied by Equation (\ref{E:DDE-sys-mu}).
\end{proof}

\begin{lemma}\label{Le: smoothness}  $F_t$, for any fixed $t\in\mathbb{R}^+$, is $C^1$-smooth in $X$.
\end{lemma}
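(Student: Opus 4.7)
The plan is to first establish the claim on the short interval $t\in[0,1]$ using the explicit integral representation (\ref{reprentation-semiflow}), and then bootstrap to arbitrary $t\geq 0$ via the chain rule (\ref{Chain-DFt}). Since (\ref{DFt}) already furnishes the Fr\'echet derivative $D_\phi F_t\xi = (v^\xi_\phi)_t$, the only remaining task is to show that $\phi\mapsto D_\phi F_t$ is continuous as a map $X\to L(X,X)$.

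\emph{Step 1 (the case $t\in[0,1]$).} For $\theta\in[-1,0]$ with $t+\theta\geq 0$, differentiating (\ref{reprentation-semiflow}) in $\phi$ yields
\[
(D_\phi F_t\xi)(\theta) = e^{-\mu(t+\theta)}\xi(0) + \int_0^{t+\theta} e^{-\mu(t+\theta-s)} f'(\phi(s-1))\xi(s-1)\,ds,
\]
with $(D_\phi F_t\xi)(\theta) = \xi(t+\theta)$ for $t+\theta\leq 0$. Suppose $\phi_n\to\phi$ in $X$; then $\{\phi_n\}\cup\{\phi\}$ is bounded, so $f'$ is uniformly continuous on an interval containing the ranges of all $\phi_n$ and $\phi$, whence $s\mapsto f'(\phi_n(s-1))$ converges uniformly on $[0,1]$ to $s\mapsto f'(\phi(s-1))$. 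The elementary bound
\[
\norm{(D_\phi F_t - D_{\phi_n} F_t)\xi} \leq \norm{\xi}\cdot \max_{s\in[0,1]}|f'(\phi(s-1))-f'(\phi_n(s-1))|\cdot\int_0^1 e^{-\mu(1-s)}\,ds
\]
is uniform over $\xi$ in the unit ball of $X$, so $\norm{D_\phi F_t - D_{\phi_n} F_t}_{L(X,X)}\to 0$. Hence $\phi\mapsto D_\phi F_t$ is norm-continuous for every $t\in[0,1]$.

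\emph{Step 2 (the case $t>1$).} Write $t=n+r$ with $n\in\mathbb{N}$ and $r\in[0,1)$, and iterate (\ref{Chain-DFt}):
\[
D_\phi F_t = D_{F_{n+r-1}(\phi)} F_1 \circ D_{F_{n+r-2}(\phi)} F_1 \circ \cdots \circ D_{F_r(\phi)} F_1 \circ D_\phi F_r.
\]
Each factor depends continuously on $\phi$: the base points $F_k(\phi)$ vary continuously with $\phi$ by continuity of the semiflow $F$, and $\psi\mapsto D_\psi F_1$ and $\phi\mapsto D_\phi F_r$ are norm-continuous by Step 1. Composition of operator-valued maps continuous in the operator norm is continuous, so $\phi\mapsto D_\phi F_t$ is continuous on all of $X$ for every $t\geq 0$.

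The main obstacle is Step 1 — promoting the pointwise-in-$\xi$ representation of $D_\phi F_t$ to operator-norm continuity in $\phi$. The uniform-in-$s$ continuity of $s\mapsto f'(\phi(s-1))$ on bounded subsets of $X$, combined with the exponential kernel $e^{-\mu(t+\theta-s)}$, is precisely what allows the estimate to be carried out uniformly in $\xi$ on the unit ball.
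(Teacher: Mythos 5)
Your proposal is correct and follows essentially the same route as the paper: for $t\in[0,1]$ you exploit that the variational solution on $[0,1]$ involves only $\xi(s-1)$ and $\phi(s-1)$ directly, giving operator-norm continuity of $\phi\mapsto D_\phi F_t$ from the uniform continuity of $f'$ on bounded sets, and then you extend to all $t\geq 0$ via the chain rule (\ref{Chain-DFt}); the paper does exactly this, merely writing out the composition-continuity step as an explicit product-difference estimate (\ref{deco-diff-DF}) with the uniform bounds (\ref{D-opera-bound}) and an induction.
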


\begin{proof} It suffices to prove that $D_{(\cdot)}F_t: X \mapsto L(X)$ is continuous for any fixed $t\geq 0$, where $L(X)$ is the Banach space consisting of all bounded linear operators on $X$. It then follows the $C^1$-smoothness of the map $F_t$ for any fixed $t\in\mathbb{R}^+$. Recall that  $v^{\xi}_{\phi}$ is the unique solution of $(\ref{E:LVE})$ with the initial data $\xi\in X$ for any $\phi\in X$. It follows from (\ref{E:LVE}) and (\ref{DFt}) that 

\begin{equation}\label{derivative-evolution}(D_{\phi}F_t\xi)(0)=e^{-\mu t}\xi(0)+\int_{0}^t e^{-\mu(t-s)}f^{\prime}(x^{\phi}(s-1))v^{\xi}_{\phi}(s-1)ds
\end{equation} for any $(t,\phi,\xi)\in\mathbb{R}^+\times X\times X$. Denoted by $S=\{\xi\in X: \norm{\xi}=1\}$. Then, for any $t\in [0,1]$ and $\tilde{\phi},\,\phi\in X$, 

\begin{equation}\begin{aligned}& \,\,\,\,\,\,\,\,\norm{D_{\tilde{\phi}}F_t-D_{\phi}F_t}_{L(X)}=\sup\limits_{\xi\in S}\{\norm{D_{\tilde{\phi}}F_t\xi-D_{\phi}F_t\xi}\}\\&=\sup\limits_{\xi\in S}\{\max\limits_{\mathop{0\leq t+\theta\leq 1,\,\theta\in[-1,0]}}\{\abs{v^{\xi}_{\tilde{\phi}}(t+\theta)-v^{\xi}_{\phi}(t+\theta)}\}\},
\end{aligned}\end{equation} where $\norm{\cdot}_{L(X)}$ is the norm of linear operators in $L(X)$.

For any $\xi\in S$ and $t\in[0,1]$, one has 

\begin{equation}\label{DF-norm-difference-1}\begin{aligned}&\,\,\,\,\,\,\,\,\,\,\max\limits_{\mathop{0\leq t+\theta\leq 1,\, \theta\in[-1,0]}}\{\abs{v^{\xi}_{\tilde{\phi}}(t+\theta)-v^{\xi}_{\phi}(t+\theta)}\}\\
&=\max\limits_{\mathop{0\leq t+\theta\leq 1,\, \theta\in[-1,0]}}\{\abs{\int_0^{t+\theta}e^{-\mu(t+\theta-s)}f^{\prime}(x^{\tilde{\phi}}(s-1))v^{\xi}_{\tilde{\phi}}(s-1)ds-\int_0^{t+\theta}e^{-\mu(t+\theta-s)}f^{\prime}(x^{\phi}(s-1))v^{\xi}_{\phi}(s-1)ds}\}\\
&=\max\limits_{\mathop{0\leq t+\theta\leq 1,\, \theta\in[-1,0]}}\{\abs{\int_0^{t+\theta}e^{-\mu(t+\theta-s)}\big[f^{\prime}(x^{\tilde{\phi}}(s-1))-f^{'}(x^{\phi}(s-1))\big]\xi(s-1)ds}\}\\
&\leq \max\limits_{\mathop{0\leq t+\theta\leq 1,\, \theta\in[-1,0]}}\{\int_0^{t+\theta}\mid f^{\prime}(x^{\tilde{\phi}}(s-1))-f^{\prime}(x^{\phi}(s-1))\mid ds\}\\
&=\max\limits_{\mathop{0\leq t+\theta\leq 1,\,\theta\in[-1,0]}}\{\int_0^{t+\theta}\mid f^{\prime}(\tilde{\phi}(s-1))-f^{\prime}(\phi(s-1))\mid ds\}.
\end{aligned}\end{equation} It implies that for $(t,\tilde{\phi}), (t,\phi)\in [0,1]\times X$, 

\begin{equation}\label{DF-norm-difference}\norm{D_{\tilde{\phi}}F_t-D_{\phi}F_t}_{L(X)}\leq\max\limits_{\mathop{0\leq t+\theta\leq 1,\, \theta\in[-1,0]}}\{\int_0^{t+\theta}\mid f^{\prime}(\tilde{\phi}(s-1))-f^{\prime}(\phi(s-1))\mid ds\}.\end{equation} By the $C^1$-smoothness of $f$, 

$$D_{(\cdot)}F_{t}: X\rightarrow L(X) \,\,\text{is continuous for any}\,\,t\in[0,1].$$ By (\ref{reprentation-semiflow}), one has 

\begin{equation}\label{boundedness-F-01}\norm{F_t(\phi)}\leq \norm{\phi}+\max\limits_{\mid x\mid\leq\norm{\phi}}\{\mid f(x)\mid\} \,\,\text{for any}\,\,(t,\phi)\in[0,1]\times X.
\end{equation} By (\ref{derivative-evolution}), 

\begin{equation}\label{boundedness-DF-01}\norm{D_{\phi}F_t}_{L(X)}\leq \max\limits_{\mid x \mid\leq \norm{\phi}}\{\mid f^{\prime}(x)\mid+1\}\,\,\text{for any}\,\,(t,\phi)\in [0,1]\times X.
\end{equation} It then follows from (\ref{Chain-DFt}) and (\ref{boundedness-F-01})-(\ref{boundedness-DF-01})  that for each bounded subset $B$ of $X$ and $t\in\mathbb{R}^+$, there is a $M_{t,B}>0$ such that

\begin{equation}\label{D-opera-bound}\norm{F_{\tau}(\phi)}, \norm{D_{\phi}F_{\tau}}_{L(X)}\leq M_{t,B}\,\,\text{for any} \,\,(\tau,\phi)\in [0,t]\times B.
\end{equation} Note that for any $\tilde{\phi},\phi\in X$, 

\begin{equation}\label{deco-diff-DF}\begin{aligned}&\,\,\,\,\,\,\,\,\norm{D_{\tilde{\phi}}F_{t_1+t_2}-D_{\phi}F_{t_1+t_2}}_{L(X)}\\
&=\norm{D_{F_{t_1}(\tilde{\phi})}F_{t_2}\circ D_{\tilde{\phi}}F_{t_1}-D_{F_{t_1}(\phi)}F_{t_2}\circ D_{\phi}F_{t_1}}_{L(X)}\\
& \leq\norm{D_{F_{t_1}(\tilde{\phi})}F_{t_2}-D_{F_{t_1}(\phi)}F_{t_2}}_{L(X)}\cdot \norm{D_{\tilde{\phi}}F_{t_1}}_{L(X)}\\
&\,\,\,\,\,\,+\norm{D_{\tilde{\phi}}F_{t_1}-D_{\phi}F_{t_1}}_{L(X)}\cdot \norm{D_{F_{t_1}(\phi)}F_{t_2}}_{L(X)}.
\end{aligned}\end{equation} By mathematical induction, one has that

$$D_{(\cdot)}F_{t}: X\rightarrow L(X) \,\,\text{is continuous for any}\,\,t\in\mathbb{R}^+.$$

Therefore, we have completed the proof.
\end{proof}

\begin{rem}\label{R:Cones-imply-ES} Take any $t_0\in\mathbb{R}^+$ and compact set $\mathcal{K}\subset X$. By (\ref{D-opera-bound}), there is a $M_{t_0,\mathcal{K}}>0$ such that $\norm{F_t(\phi)},\,\norm{D_{\phi}F_t}_{L(X)}\leq M_{t_0,\mathcal{K}}$ for any $(t,\phi)\in [0,t_0]\times \mathcal{K}$.
\end{rem}

\begin{lemma}\label{Le: dissipativity} $F_t$ is point dissipative in $X$, i.e., there is a $b(\mu,f)\in (0,\infty)$ such that $\lim\sup\limits_{t\rightarrow +\infty} \norm{F_t(\phi)}\leq b(\mu,f)$ for any $\phi\in X$.
\end{lemma}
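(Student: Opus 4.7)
The plan is that this is essentially a direct reformulation of Lemma \ref{global-solution} in the language of the semiflow. Recall that the semiflow is defined by $F_t(\phi) = x_t^{\phi}$, so the norm $\|F_t(\phi)\|$ in the phase space $X$ is exactly $\|x_t^{\phi}\|$. Therefore I would simply invoke Lemma \ref{global-solution}, which already asserts the existence of a constant $b(\mu,f)\in(0,+\infty)$, independent of $\phi$, satisfying $\limsup_{t\to+\infty}\|x_t^{\phi}\| \leq b(\mu,f)$ for every $\phi\in X$. Substituting $\|F_t(\phi)\|$ for $\|x_t^{\phi}\|$ yields the conclusion.

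There is no real obstacle here: all the substantive work was already carried out in the proof of Lemma \ref{global-solution}, where the bound $b(\mu,f)$ was explicitly identified (either $\sup f - f(\sup f)$ under the assumption $\sup f < +\infty$, or $f(\inf f) - \inf f$ under the assumption $-\infty < \inf f$) after the case analysis on Type (a) solutions (whose zero sets are bounded, forcing convergence to $0$) and Type (b) solutions (whose zero sets are unbounded, so that bounds on consecutive local extrema $\overline{t}$, $\tilde{t}$ can be extracted from the representation at a preceding zero together with the sign information from $f' < 0$). Hence the proof consists of a single sentence invoking Lemma \ref{global-solution} together with the definition $F_t(\phi) = x_t^{\phi}$.
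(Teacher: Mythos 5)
Your proof is correct and is exactly the paper's argument: the paper's proof of Lemma~\ref{Le: dissipativity} consists of the single sentence that it is directly implied by Lemma~\ref{global-solution}, using the definition $F_t(\phi)=x_t^{\phi}$. Nothing further is needed.
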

\begin{proof} It is directly implied by Lemma \ref{global-solution}.
\end{proof}

\begin{lemma}\label{Le: injectivity and compactness}
\item[(i)] $F_t$ is injective for any $t\in \mathbb{R}^+$, and it is completely continuous in $X$ for any $t\in [1,+\infty)$.
\item[(ii)] $D_{\phi}F_t$ is injective for any $(t,\phi)\in \mathbb{R}^+\times X$, and it is compact in $X$ for $(t,\phi)\in [1,+\infty)\times X$. 
\end{lemma}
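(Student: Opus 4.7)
The plan is to treat both parts by two mechanisms: a backward-uniqueness argument using the strict monotonicity $f'<0$ for the injectivity claims, and an Arzel\`a--Ascoli argument based on a uniform $X^1$-bound over $[t-1,t]$ for the compactness claims.

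For the injectivity of $F_t$ in part (i), I would suppose $F_t(\phi)=F_t(\psi)$ for some $t>0$, i.e.\ $x^\phi\equiv x^\psi$ on $[t-1,t]$. Plugging into (\ref{E:DDE-sys-mu}) at any $s>0$ where both sides of the equation are known to coincide yields $f(x^\phi(s-1))=f(x^\psi(s-1))$; since $f'<0$ makes $f$ strictly monotone and hence injective, this forces $x^\phi(s-1)=x^\psi(s-1)$ on the interval shifted by $-1$. Combined with continuity this extends the set of equality by length one to the left, and iterating at most $\lceil t\rceil+1$ times one recovers $x^\phi\equiv x^\psi$ on $[-1,t]$, whence $\phi=\psi$. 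The same backward iteration applied to the linear variational equation (\ref{E:LVE}), using that the coefficient $f'(x^\phi(s-1))$ never vanishes because $f'<0$, establishes the injectivity of $D_\phi F_t$ in part (ii) for every $(t,\phi)\in\mathbb{R}^+\times X$.

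For the complete continuity of $F_t$ when $t\geq 1$, continuity is already known from Lemma \ref{Le: smoothness}. Given a bounded $B\subset X$, the representation (\ref{reprentation-semiflow}) together with the bound (\ref{boundedness-F-01}) iterated via the chain-rule bound in (\ref{D-opera-bound}) gives a uniform pointwise bound on $x^\phi(s)$ for $(\phi,s)\in B\times[0,t]$. Plugging back into (\ref{E:DDE-sys-mu}) bounds $(x^\phi)'$ uniformly on $[t-1,t]$; by Lemma \ref{Le: embeding} one then has $F_t(B)\subset X^1$ with a uniform $X^1$-bound, and the compact embedding $X^1\hookrightarrow X$ (Arzel\`a--Ascoli) delivers relative compactness. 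The compactness of $D_\phi F_t$ in (ii) proceeds identically after replacing $x^\phi$ with $v^\xi_\phi$ for $\|\xi\|\leq 1$: the uniform $L(X)$-bound (\ref{D-opera-bound}) gives a uniform $C^0$-bound on $v^\xi_\phi$ over $[0,t]$, and (\ref{E:LVE}) then bounds $(v^\xi_\phi)'$ uniformly on $[t-1,t]$, so the image of the unit ball under $D_\phi F_t$ lies in a bounded subset of $X^1$ and is thus relatively compact in $X$.

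The main obstacle I expect is careful bookkeeping in the backward iteration: since the ODE only holds for $s>0$, the very first backward step (particularly when $t\in(0,1]$, where $t-1$ may be negative) must be set up starting from $(0,t]$ rather than from $(t-1,t]$, and continuity of $x^\phi$ (resp.\ $v^\xi_\phi$) on $[-1,t]$ has to be invoked at each iteration to close up endpoints and glue successive extended intervals. Once this is handled, the remaining estimates are standard for delay equations.
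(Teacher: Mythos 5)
Your proposal takes essentially the same route as the paper: injectivity of both $F_t$ and $D_\phi F_t$ by backward uniqueness exploiting the strict monotonicity (and nonvanishing) of $f'$, and complete continuity/compactness via a uniform $X^1$-bound on the image of a bounded set followed by Arzel\`a--Ascoli. The only cosmetic difference is that the paper phrases the backward propagation of agreement as a contradiction against a minimal coincidence time $t_0$ and works from the integral equation (\ref{int-eq}) on $[t_0-\epsilon,t_0]$, whereas you iterate the one-delay backward extension directly from (\ref{E:DDE-sys-mu}); the underlying mechanism is identical.
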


\begin{proof}
{\it (i)} Since $F_0=\text{Id}$, it suffices to prove the injectivity of $F_t$ for the case $t>0$. For some $t>0$ and $\tilde{\phi},\phi\in X$ such that $\tilde{\phi}\neq \phi$, suppose $F_t(\tilde{\phi})=F_t(\phi)$ (i.e., $x_t^{\tilde{\phi}}=x_t^{\phi}$). It follows from the continuity of solutions of (\ref{E:DDE-sys-mu}) that there exists the smallest time $t_0>0$ such that $x_{t_0}^{\tilde{\phi}}=x_{t_0}^{\phi}$.  By (\ref{int-eq}), one has that for any $\epsilon\in(0,\min\{t_0,1\})$, 

$$\begin{aligned}&x^{\tilde{\phi}}(t_0)=x^{\tilde{\phi}}(t_0-\epsilon)+\int_{t_0-\epsilon}^{t_0}e^{-\mu(t_0-\tau)}f(x^{\tilde{\phi}}(\tau-1))d\tau,\\
&x^{\phi}(t_0)=x^{\phi}(t_0-\epsilon)+\int_{t_0-\epsilon}^{t_0}e^{-\mu(t_0-\tau)}f(x^{\phi}(\tau-1))d\tau.\end{aligned}$$ Then, one has 

$$0=\int_{t_0-\epsilon}^{t_0}e^{-\mu(t_0-\tau)}\big[f(x^{\tilde{\phi}}(\tau-1))- f(x^{\phi}(\tau-1))\big]d\tau$$ for any $\epsilon\in(0,\min\{t_0,1\})$. Since $f^{'}<0$, one has $x^{\tilde{\phi}}_{t_0-\epsilon}-x^{\phi}_{t_0-\epsilon}=0$ for any $\epsilon\in(0,\min\{t_0,1\})$, a contradiction. Thus, 

$$F_t\,\,\text{is injective for any}\,\,t>0.$$ 

Let $B$ be bounded in $X$ and $M$ be its upper bound. By (\ref{E:DDE-sys-mu}) and Remark \ref{R:Cones-imply-ES}, one has \begin{equation}\label{c1-norm-estimating} |(x^{\phi})^{\prime}(t)|\leq \mu M_{1,B}+\max\limits_{\abs{x}\leq M}\{|f(x)|\}\end{equation} for any $(t,\phi)\in [0,1]\times B$. Thus, all functions contained in $F_1(B)$ are uniformly equicontinuous in $X$. It then follows from Remark \ref{R:Cones-imply-ES} and Arzel$\grave{a}$-Ascoli theorem that $F_1(B)$ is precompact. Thus, $F_1$ is completely continuous. By utilizing Remark \ref{R:Cones-imply-ES} again, 

$$F_t\,\,\,\text{is completely continuous for any}\,\,\, t\geq 1.$$

{\it (ii)} It suffices to prove the case $t>0$. Let $\tilde{\xi},\xi \in X$ such that $\tilde{\xi}\neq\xi$.  Suppose that $D_{\phi}F_t\tilde{\xi}=D_{\phi}F_t\xi$ for some $t>0$. By the continuity of solutions of (\ref{E:LVE}), there is a smallest $t_0>0$ such that $D_{\phi}F_{t_0}\tilde{\xi}=D_{\phi}F_{t_0}\xi$. Furthermore, one has that 

\begin{equation}\begin{aligned}&(D_{\phi}F_{t_0}\tilde{\xi})(0)=(D_{\phi}F_{t_0-\epsilon}\tilde{\xi})(0)+\int_{t_0-\epsilon}^{t_0}e^{-\mu(t_0-s)}f^{\prime}(x^{\phi}(s-1))(D_{\phi}F_{s}\tilde{\xi})(-1)ds\\
&(D_{\phi}F_{t_0}\xi)(0)=(D_{\phi}F_{t_0-\epsilon}\xi)(0)+\int_{t_0-\epsilon}^{t_0}e^{-\mu(t_0-s)}f^{\prime}(x^{\phi}(s-1))(D_{\phi}F_{s}\xi)(-1)ds
\end{aligned}\end{equation} for any $\epsilon\in(0,\min\{t_0,1\})$. Hence, 

\begin{equation} \nonumber
0=\int_{t_0-\epsilon}^{t_0}e^{-\mu(t_0-s)}f^{\prime}(x^{\phi}(s-1))\cdot\big[(D_{\phi}F_{s}\tilde{\xi})(-1)-(D_{\phi}F_{s}\xi)(-1)\big]ds
\end{equation} for any $\epsilon\in(0,\min\{t_0,1\})$. It follows form $f^{\prime}<0$ that $D_{\phi}F_{t_0-\epsilon}\tilde{\xi}=D_{\phi}F_{t_0-\epsilon}\xi$ for any $\epsilon\in(0,\min\{t_0,1\})$, a contradiction. Thus, 

$$D_{\phi}F_t \,\,\text{is injective for any} \,\, (t,\phi)\in\mathbb{R}^+\times X.$$ By Remark \ref{R:Cones-imply-ES}, $D_{\phi}F_{t}: X\rightarrow X$ is bounded for any $(t,\phi)\in \mathbb{R}^+\times X$. By (\ref{E:LVE}), 

$$\norm{(D_{\phi} F_1\xi)^{\prime}}\leq\mu \norm{D_{\phi}F_1\xi}+\sup\limits_{\mid x \mid\leq\norm{\phi}}\{\mid f^{\prime}(x)\mid\}\cdot\norm{\xi}$$ for $(\xi,\phi)\in X\times X$. Let $B\subset X$ be bounded. For any $\phi\in X$, one has that $D_{\phi}F_{1}(B)$ is bounded and more, functions in $D_{\phi}F_{1}(B)$ are uniformly equicontinuous. By utilizing Arzel$\grave{a}$-Ascoli theorem again, one has that $ D_{\phi}F_{1}(B)$ is precompact for any $\phi\in X$ and bounded set $B\subset X$. Thus, $D_{\phi}F_{1}$ is compact for any $\phi\in X$. Together with Remark \ref{R:Cones-imply-ES}, one has 

$$D_{\phi}F_{t}\,\,\text{is compact for any}\,\, (t,\phi)\in[1,\infty)\times X.$$ 

Therefore, we have completed the proof. 
\end{proof}

\begin{lemma}\label{Le-attractor} $F_t$ has a unique connected compact global attractor $\mathcal{A}$ in the phase space $(X,\norm{\cdot})$. Consequently, one has $0\in \mathcal{A}$.
\end{lemma}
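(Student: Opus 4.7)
The plan is to invoke the classical existence theorem for global attractors of dissipative semiflows on a Banach space (see, e.g., Hale's monograph on asymptotic behavior of dissipative systems): a continuous semiflow which is (i) point dissipative, (ii) eventually compact, and (iii) sends bounded sets on bounded time windows to bounded sets admits a unique nonempty compact invariant global attractor $\mathcal{A}$, and this attractor is connected whenever the underlying phase space is connected. I would cite Lemma \ref{Le: dissipativity} for (i), Lemma \ref{Le: injectivity and compactness}(i) for (ii), and Remark \ref{R:Cones-imply-ES} (equivalently, the representation (\ref{reprentation-semiflow}) combined with the bound (\ref{boundedness-F-01}) iterated by the semiflow property) for (iii). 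The continuity of $F$ has already been recorded in Section~2.2, so the abstract theorem applies directly and produces the desired compact invariant attractor $\mathcal{A}$, uniquely characterized as the maximal compact invariant subset of $X$ (equivalently, the set attracting every bounded subset of $X$).

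For connectedness, I would use the standard argument: by (i) and (iii) one may fix a closed bounded set $B_0 \subset X$ that is absorbing and forward invariant; then $\mathcal{A}=\bigcap_{t\geq 0}\overline{F_t(B_0)}$. Because $B_0$ is convex and hence connected, each $\overline{F_t(B_0)}$ is the closure of the continuous image of a connected set, so it is connected; the compact sets $\overline{F_t(B_0)}$ form a decreasing nested family, and a nested intersection of compact connected sets in a Hausdorff space is connected, giving connectedness of $\mathcal{A}$.

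Finally, since $f(0)=0$, the constant zero function in $X$ is fixed by $F_t$ for every $t\geq 0$; hence $\{0\}$ is a compact invariant set, and by maximality $0\in \mathcal{A}$. I expect no serious obstacle in this lemma: every hypothesis of the abstract attractor existence theorem has essentially been prepared by the preceding lemmas of Section~3, and the only mild care needed is in matching conventions with the particular formulation of the existence theorem that is cited, which is pure bookkeeping.
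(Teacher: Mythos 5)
Your proposal is correct and follows essentially the same route as the paper: the paper likewise combines Lemma \ref{Le: dissipativity} (point dissipativity) with Lemma \ref{Le: injectivity and compactness}(i) (complete continuity for $t\geq 1$) and invokes the standard attractor existence theorem of Hale--Lunel/Hale, with uniqueness from Hale's maximality characterization. Your extra details on boundedness and the nested-intersection argument for connectedness are just an unpacking of what those cited theorems already provide, and your justification of $0\in\mathcal{A}$ via maximality is exactly the implicit reasoning in the paper.
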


\begin{proof} By virtue of Lemma \ref{Le: dissipativity} and \ref{Le: injectivity and compactness}, $F_t$ is point dissipative and completely continuous for any $t\geq1$. It then follows from \cite[Theorem 5.2 in Chapter 4]{Hale-Lun} (see also \cite[Theorem 3.4.8]{Hale}) that $F_t$ has a connected compact global attractor $\mathcal{A}$ in $(X,\norm{\cdot})$. We emphasize that the uniquiness of $\mathcal{A}$ is implied by \cite[Theorem 3.4.2]{Hale}.
\end{proof}

For simplicity, we hereafter denote by $O^+(\phi)$ the positive semiorbit of $F_t$ originating from $\phi\in X$, $\omega(\phi)$ the $\omega$-limit set of $O^+(\phi)$, and $\mathcal{A}$ the compact global attractor of $F_t$. 

\begin{lemma}\label{flow-extension} In the phase space $X$, the following items hold:
\item[(i)] Let $\mathcal{K}\subset X$ be a compact invariant set w.r.t. $F_t$. Then, $F_t$ admits a flow extension on $\mathcal{K}$. 
\item[(ii)]For any $\phi\in X$,  $O^+(\phi)$ is precompact in $X$, and hence, $F_t$ admits a flow extension on $\omega(\phi)$.
\end{lemma}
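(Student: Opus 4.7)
The plan is to derive (i) from the injectivity of $F_t$ established in Lemma \ref{Le: injectivity and compactness}(i) combined with compactness of $\mathcal{K}$, and to derive (ii) by combining point dissipativity (Lemma \ref{Le: dissipativity}) with the complete continuity of $F_1$ (Lemma \ref{Le: injectivity and compactness}(i)) to force precompactness of $O^+(\phi)$, and then invoke (i) on $\omega(\phi)$.

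For (i), invariance gives $F_t(\mathcal{K})=\mathcal{K}$ for every $t\geq 0$, so $F_t|_\mathcal{K}$ is a continuous surjection of the compact Hausdorff space $\mathcal{K}$ onto itself. Since $F_t$ is injective on all of $X$ by Lemma \ref{Le: injectivity and compactness}(i), $F_t|_\mathcal{K}$ is a continuous bijection, hence a homeomorphism of $\mathcal{K}$. I would then define $\tilde{F}_t:=F_t|_\mathcal{K}$ for $t\geq 0$ and $\tilde{F}_t:=(F_{-t}|_\mathcal{K})^{-1}$ for $t<0$. The semigroup property of $F_t$, together with injectivity and surjectivity on $\mathcal{K}$, propagates to the group law $\tilde{F}_{t+s}=\tilde{F}_t\circ\tilde{F}_s$ for all $t,s\in\mathbb{R}$. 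The one nontrivial point is joint continuity of $(t,\phi)\mapsto\tilde{F}_t(\phi)$ at points with $t<0$: given $(t_n,\phi_n)\to(t,\phi)$ with $t<0$, the images $\tilde{F}_{t_n}(\phi_n)$ lie in the compact set $\mathcal{K}$ and so have a cluster point $\psi\in\mathcal{K}$; passing to the limit in $F_{-t_n}(\tilde{F}_{t_n}(\phi_n))=\phi_n$ using continuity of $F$ on $\mathbb{R}^+\times X$ gives $F_{-t}(\psi)=\phi$, and injectivity of $F_{-t}$ forces $\psi=\tilde{F}_t(\phi)$, so the whole sequence converges.

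For (ii), Lemma \ref{Le: dissipativity} furnishes $T\geq 0$ such that $B:=\{F_t(\phi):t\geq T\}$ is bounded in $X$. Writing $\{F_t(\phi):t\geq T+1\}=F_1(B)$, the complete continuity of $F_1$ from Lemma \ref{Le: injectivity and compactness}(i) makes this set precompact. Combined with the compact piece $\{F_t(\phi):t\in[0,T+1]\}$ (continuous image of a compact interval), the full semiorbit $O^+(\phi)$ is precompact. Then $\omega(\phi)$ is nonempty, compact, connected and invariant by the standard facts recalled in Section 2, and applying part (i) with $\mathcal{K}=\omega(\phi)$ yields the desired flow extension. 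I do not anticipate a genuine obstacle here; the mildly delicate point is the joint-continuity check at negative times in (i), but it is routine once the homeomorphism property of $F_t|_\mathcal{K}$ is in hand.
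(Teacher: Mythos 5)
Your proposal is correct, and for part (i) it follows a genuinely different route from the paper. The paper does not invoke the general topological fact you use; instead it argues analytically: it notes $\mathcal{K}\subset\mathcal{A}$ so that the functions in $\mathcal{K}$ are uniformly equicontinuous (via Equation (\ref{E:DDE-sys-mu})), reduces joint continuity to continuity of $F_{-t}\mid_{\mathcal{K}}$ for fixed $t$ by a triangle-inequality splitting in $t$, and then proves that continuity from the integral equation (\ref{int-eq}) together with the existence of the continuous inverse $f^{-1}$ (a consequence of $f^{\prime}<0$), first pointwise in $\theta$ and then uniformly using the equicontinuity of $\mathcal{K}$. Your argument --- $F_t\mid_{\mathcal{K}}$ is a continuous bijection of a compact metric space, hence a homeomorphism, with joint continuity at negative times obtained by the subsequence/compactness argument and injectivity of $F_{-t}$ --- is shorter and more general: it uses only injectivity, invariance, compactness of $\mathcal{K}$ and joint continuity of $F$, and in particular does not need the attractor or the monotonicity of $f$ beyond what already went into Lemma \ref{Le: injectivity and compactness}(i); it is the standard flow-extension argument in the monotone-systems literature. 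The paper's version is more hands-on and shows explicitly how the structure of the delay equation enters, but proves nothing you do not also obtain. (Only cosmetic remark: your compactness argument should also be quoted at $t=0$ for sequences $t_n\to 0^-$, which it handles verbatim.) For part (ii) the two proofs are essentially the same ingredients: the paper cites complete continuity of $F_t$ ($t\geq1$) together with the global attractor of Lemma \ref{Le-attractor} to get precompactness of $O^+(\phi)$ and then applies (i), whereas you derive precompactness directly from point dissipativity plus $F_1(B)$ being precompact, bypassing the attractor theorem; both are valid.
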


\begin{proof} (i) By utilizing Lemma \ref{Le: injectivity and compactness}(i), $F_t$ is injetive for any $t\in \mathbb{R}^+$.  So, there exists the inverse of $F_t$ on $\mathcal{K}$ for any $t\geq 0$, denoted by $F_{-t}$. Moreover, there is a unique negative semiorbit $O^-(\phi)$ for any $\phi\in\mathcal{K}$. Recall that $F: \mathbb{R}^+\times X\mapsto X$ is continuous and possesses a unique connected compact global attractor $\mathcal{A}$ mentioned in Lemma \ref{Le-attractor}, which is maximal. Thus, $\mathcal{K}\subset \mathcal{A}$. Together with Equation (\ref{E:DDE-sys-mu}), all functions in $\mathcal{K}$ are uniformly equicontinuous. Note that for any $(t,\phi)\in\mathbb{R}^+\times \mathcal{K}$ and $(\Delta t, \tilde{\phi})\in \mathbb{R}\times \mathcal{K}$ such that $\abs{\Delta t}, \norm{\tilde{\phi}-\phi}$ small enough, one has that

$$\begin{aligned}&\norm{F_{-t-\Delta t}(\tilde{\phi})-F_{-t}(\phi)}\leq\norm{ F_{-t-\Delta t}(\tilde{\phi})-F_{-t}(\tilde{\phi})}+\norm{F_{-t}(\tilde{\phi})-F_{-t}(\phi)}\\
&\leq \norm{ F_{\abs{\Delta t}}(\psi)-\psi}+\norm{F_{-t}(\tilde{\phi})-F_{-t}(\phi)}, \,\, {\rm where}\,\, \psi=F_{\min\{-t-\Delta t,  -t\}}(\tilde{\phi})\in\mathcal{K}.\end{aligned}$$ It then suffices to prove the continuity of $F_{-t}\mid_{\mathcal{K}}$ for any fixed $t\in\mathbb{R}^+$.

By utilizing (\ref{int-eq}) and uniform equicontinuity of all funtions in $\mathcal{K}$, one can prove by contrary that for any fixed $t\in[0,1]$, $\tilde{\phi}: \mapsto  f(F_{-t}(\tilde{\phi})(\theta))$ is continuous on $\mathcal{K}$ for each $\theta\in[-1,0]$. The $C^1$-smoothness of $f$ and $f^{\prime}<0$ imply that the continuous inverse function of $f$ exists, denoted by $f^{-1}$.  Then, $\tilde{\phi}: \mapsto F_{-t}(\tilde{\phi})(\theta)$ is continuous on $\mathcal{K}$ for each $\theta\in[-1,0]$. By utilizing all $\psi\in\mathcal{K}$ being uniformly equicontinuous again, one has that $\tilde{\phi}: \mapsto F_{-t}(\tilde{\phi})$ is continuous from $\mathcal{K}$ to $\mathcal{K}$ for any $t\in[0,1]$. Thus, $F_{-t}\mid_{\mathcal{K}}$ is continuous for any $t\in[0,1]$. Furthermore, one has that $F_{-t}\mid_{\mathcal{K}}$ is continuous for any $t\in\mathbb{R}^+$. Therefore, $F_t$ admits a flow extension on $\mathcal{K}$.

\vskip 3mm
(ii) By Lemma \ref{Le: injectivity and compactness}(i), $F_t$ is completely continuous for any $t\geq 1$. Together with Lemma \ref{Le-attractor}, $O^+(\phi)$ is precompact in the phase spece $X$. Hence, $\omega(\phi)$ exists for any $\phi\in X$, and more, $\omega(\phi)$ is compact and invariant w.r.t. $F_t$. It then follows from Lemma \ref{flow-extension}(i) that $F_t$ admits a flow extension on $\omega(\phi)$.

Therefore, we have completed the proof.
\end{proof}

Let $f^{\prime}(0)=-\beta$ and hence, $\beta>0$. The linearized variable equation $(\ref{E:LVE})$ along the equilibrium $0$ is \begin{equation}\label{E:LVE-0}v^{\prime}(t)=-\mu v(t)-\beta v(t-1).\end{equation} Its characteristic equation is \begin{equation}\label{Chara-eq}-\mu=\beta e^{-\lambda}+\lambda.\end{equation} Let $\Sigma$ consist of the eigenvalues of the infinitesimal generator of $D_0F_t$ (i.e., roots of (\ref{Chara-eq})).

\begin{lemma}\label{Spectrum-D_0F}
\item[(i)] $\Sigma$ consists of complex conjugate pairs of eigenvalues in the double strips $\Sigma_k$, given by $2k\pi<|{\rm Im}(\lambda)|<2k\pi+\pi,\,\,k\in \mathbb{N}\setminus\{0\}$, and by $|{\rm Im}(\lambda)|<\pi,\,k=0$;
\item[(ii)] The total multiplicity of $\Sigma$ in $\Sigma_0$ is 2;
\item[(iii)]  $\max \{{\rm Re}(\cup_{k\in\mathbb{N}\setminus\{0\}}(\Sigma\cap \Sigma_k)) \}<\min\{{\rm Re}(\Sigma\cap \Sigma_0)\}$, where {\rm Re(I)} consists of all real parts of complex numbers in the given set ${\rm I}\subset \mathbb{C}$;
\item[(iv)]  Let $L$ be the generalized eigenspace associated with $\Sigma\cap \Sigma_0$ and $Q$ be the generalized eigenspace associated with $\cup_{k\in\mathbb{N}\setminus\{0\}} \big(\Sigma\cap\Sigma_k\big)$. Then, $X=L\oplus Q$; $L\subset X^1$ is 2 dimensional; and $L,\,Q$ are positively invariant w.r.t. $D_0F_t\,\,(t\geq 0)$.
\end{lemma}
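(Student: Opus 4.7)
The plan is to analyze the characteristic equation $G(\lambda) := \lambda + \mu + \beta e^{-\lambda} = 0$, where $\beta := -f'(0) > 0$. Writing $\lambda = a + ib$ and separating real and imaginary parts yields
\begin{equation*}
a + \mu + \beta e^{-a}\cos b = 0, \qquad b = \beta e^{-a}\sin b.
\end{equation*}
For (i), by conjugacy it suffices to restrict to $b > 0$. The imaginary equation forces $\sin b > 0$, i.e.\ $b \in (2k\pi,(2k+1)\pi)$ for some $k \geq 0$; the boundary values $b = j\pi$ with $j\geq 1$ are excluded because they would force $b = 0$. This establishes the strip structure.

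For (ii), I eliminate $a = -\mu - b\cot b$ to reduce the complex-root equation to $h(b) := (b/\sin b)\, e^{-\mu - b\cot b} = \beta$. A direct computation gives $h'(b)/h(b) = 1/b + (b - \sin 2b)/\sin^2 b$, which is strictly positive on each interval $(2k\pi,(2k+1)\pi)$ because $b > |\sin 2b|/2$ there. Thus $h$ is strictly increasing on every strip, and on $(0,\pi)$ it ranges from $e^{-\mu-1}$ (as $b \to 0^+$) to $+\infty$. For real roots, $g(a) := a + \mu + \beta e^{-a}$ attains its global minimum $\ln\beta + \mu + 1$ at $a = \ln\beta$; this yields two simple real roots when $\beta < e^{-\mu-1}$, one double real root (with $G''(\ln\beta) \neq 0$, hence exactly double) when $\beta = e^{-\mu-1}$, and no real root otherwise. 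A short computation of $G'(\lambda) = 1 + (a+\mu) + ib$ confirms simplicity of any complex root. In all three regimes the total multiplicity of $\Sigma \cap \Sigma_0$ equals $2$.

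For (iii), the identity $|\lambda + \mu|^2 = (a+\mu)^2 + b^2 = b^2\csc^2 b$ yields $|\lambda+\mu| = b/\sin b$ and hence $a = \ln\beta - \ln(b/\sin b)$. Since $\phi(b) := b/\sin b$ exceeds $2k\pi$ on each $\Sigma_k$ with $k\geq 1$, this immediately gives the crude bound $a_k < \ln(\beta/(2k\pi))\to -\infty$. To obtain the strict separation $\sup_{k\geq 1}{\rm Re}(\Sigma\cap\Sigma_k) < \min {\rm Re}(\Sigma\cap\Sigma_0)$, I would reduce to $\phi(b_k) > \phi(b_0)$ for $b_0, b_k$ the roots in $\Sigma_0, \Sigma_k$ corresponding to the same $\beta$; via $h(b_0)=h(b_k)=\beta$ this is equivalent to the desired inequality. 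The comparison would be carried out by a monotonicity argument tracking how the roots move as $\beta$ varies: one verifies the inequality directly in each asymptotic regime (small $\beta$ where $b_k\to 2k\pi$; large $\beta$ where $b_j\to(2j+1)\pi$; and the intermediate regime) and then extends it by continuity, using that roots never cross the strip boundaries $\mathrm{Im}\,\lambda = j\pi$. This step is the principal technical obstacle.

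For (iv), Lemma \ref{Le: injectivity and compactness}(ii) ensures that $D_0F_1$ is compact, so the infinitesimal generator has compact resolvent and $\Sigma$ consists of isolated eigenvalues of finite algebraic multiplicity accumulating only at $-\infty$. By (iii), a vertical line separates $\Sigma\cap\Sigma_0$ from $\Sigma\setminus\Sigma_0$, so the Riesz spectral projection $P_L$ associated with $\Sigma\cap\Sigma_0$ is well-defined. Setting $L := P_L X$ and $Q := (I-P_L) X$ yields the decomposition $X = L\oplus Q$ with both subspaces invariant under $D_0F_t$ for all $t\geq 0$. Part (ii) gives $\dim L = 2$, and since eigenfunctions have the form $\theta \mapsto e^{\lambda\theta}$ (with generalized eigenfunctions, in the degenerate case, polynomial multiples thereof), $L$ consists entirely of $C^\infty$ functions, so $L \subset X^1$.
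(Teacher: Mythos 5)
The paper offers no argument for this lemma at all: its ``proof'' is the citation \cite[Chapter 5]{Wal-3}, so your self-contained analysis of the characteristic equation $\lambda+\mu+\beta e^{-\lambda}=0$ is a genuinely different route, and most of it is sound. Parts (i), (ii) and (iv) are essentially correct: the strip location, the reduction to $h(b)=(b/\sin b)e^{-\mu-b\cot b}=\beta$ combined with the real-root analysis of $g(a)=a+\mu+\beta e^{-a}$ (which reproduces the trichotomy of Lemma \ref{Sigma0}), and the Riesz spectral projection giving $X=L\oplus Q$ with $L$ spanned by (polynomial multiples of) exponentials, hence $L\subset X^1$. One small slip: on $(0,\pi)$ the inequality $b>|\sin 2b|/2$ does not by itself yield $h'>0$ (it only gives $b-\sin 2b>-b$, and $1/b-b/\sin^2 b\le 0$); positivity does hold, because $\sin^2 b+b^2-b\sin 2b=(\sin b-b\cos b)^2+b^2\sin^2 b>0$, so this is easily repaired.

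The genuine gap is part (iii), and you flag it yourself. The plan ``verify the inequality in asymptotic regimes and extend by continuity, using that roots never cross strip boundaries'' is circular as stated: continuity in $\beta$ shows the set of parameters where the strict inequality holds is open, but to propagate it through the whole range you must rule out equality of real parts at some intermediate $\beta$, which is precisely the assertion to be proved. Moreover, your reduction to $\phi(b_k)>\phi(b_0)$ only applies when $\Sigma\cap\Sigma_0$ is a complex pair ($\beta e^{\mu}>1/e$); when $\beta e^{\mu}\le 1/e$ the comparison is against the smaller \emph{real} root and is not addressed. Both cases can in fact be closed elementarily from the relations $s:=\beta e^{-a}=|\lambda+\mu|=b/\sin b$, $c:=-(a+\mu)=b\cot b$, $s e^{-c}=\beta e^{\mu}$ and $s^2=b^2+c^2$: in the real regime, with $u\ge 1$ the larger root of $ue^{-u}=\beta e^{\mu}$, one has $se^{-s}<se^{-c}=ue^{-u}$ and $s>2\pi\ge 1$, so strict monotonicity of $xe^{-x}$ on $[1,\infty)$ forces $s>u$, i.e. $a_k<a_-$; in the complex regime, if $s_k\le s_0$ then $c_k\le c_0$, and either $c_0\ge 0$, which forces $b_0\le\pi/2$ and $s_0\le\pi/2<2\pi<s_k$, or $c_0<0$, whence $c_k^2\ge c_0^2$ and $s_k^2=b_k^2+c_k^2>b_0^2+c_0^2=s_0^2$ --- a contradiction either way, and since there is exactly one conjugate pair per strip with $a_k\le\ln\beta-\ln(2k\pi)\to-\infty$, the strict separation of (iii) follows. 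Without an argument of this kind (or the citation to \cite{Wal-3}), part (iii) remains unproved in your write-up.
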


\begin{proof} See \cite[Chapter 5]{Wal-3}. 
\end{proof}

\begin{lemma}\label{Sigma0}For $\Sigma\cap \Sigma_0$, one of the following three alternatives must hold:
\item[(i)] $\beta e^{\mu}<\frac{1}{e}:$ $\Sigma\cap \Sigma_0$ consists of two negative simple eigenvalues $u_2<u_1<0$, and a basis $\{v_1,v_2\}$ of $L$ is given by the restrictions of functions $t:\,\mapsto e^{u_2 t}$ and $t:\,\mapsto e^{u_1 t}$ on the interval $[-1,0]$.
\item[(ii)]  $\beta e^{\mu}=\frac{1}{e}:$ $\Sigma\cap\Sigma_0$ consists of a double eigenvalue $u_{00}=-\mu-1$, and a basis of $L$ is given by the restrictions of functions $t:\,\mapsto e^{u_{00}t}$ and $t:\,\mapsto t\cdot e^{u_{00}t}$ on the interval $[-1,0]$.
\item[(iii)]  $\beta e^{\mu}>\frac{1}{e}:$ $\Sigma\cap\Sigma_0$ consists of a pair of conjugate eigenvalues $\lambda=x+i\cdot y$, and $\overline{\lambda}=x-i\cdot y$ with $0<y<\pi$, and a basis of $L$ is given by $t:\,\mapsto e^{xt}\sin(yt)$ and $t:\,\mapsto e^{xt}\cos(yt)$ restricted on $[-1,0]$.
\end{lemma}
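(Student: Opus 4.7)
The proof plan is to study directly the real roots of the transcendental characteristic equation \eqref{Chara-eq} and combine this analysis with the multiplicity count in Lemma \ref{Spectrum-D_0F}(ii). Set $g(\lambda)=\lambda+\mu+\beta e^{-\lambda}$ for real $\lambda$. Then $g''(\lambda)=\beta e^{-\lambda}>0$, so $g$ is strictly convex with unique critical point $\lambda^{*}=\ln\beta$ and minimum value $g(\lambda^{*})=\ln\beta+\mu+1$. Rearranging, $g(\lambda^{*})<0$, $=0$, or $>0$ precisely when $\beta e^{\mu}<1/e$, $=1/e$, or $>1/e$ respectively. Since $g(\lambda)\to+\infty$ as $\lambda\to\pm\infty$, the equation $g(\lambda)=0$ has, in the three cases, exactly two simple real roots, one double real root, or no real root.

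In case (i), denote the real roots by $u_{2}<u_{1}$. Since $g(0)=\mu+\beta>0$, while $g$ is strictly decreasing on $(-\infty,\lambda^{*})$ and strictly increasing on $(\lambda^{*},+\infty)$, we get $u_{2}<\lambda^{*}<u_{1}<0$. By Lemma \ref{Spectrum-D_0F}(ii) the total multiplicity of $\Sigma\cap\Sigma_{0}$ is $2$, so necessarily $\Sigma\cap\Sigma_{0}=\{u_{1},u_{2}\}$ with each root simple. A direct substitution using the identity $u_{i}=-\mu-\beta e^{-u_{i}}$ verifies that $t\mapsto e^{u_{i}t}$ solves \eqref{E:LVE-0}, whence the stated basis of $L$ follows.

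In case (ii), the unique root is $u_{00}=\lambda^{*}=\ln\beta=-\mu-1$, with algebraic multiplicity $2$; by Lemma \ref{Spectrum-D_0F}(ii) this accounts for all of $\Sigma\cap\Sigma_{0}$. To produce the second generalized-eigenvector, substitute $v(t)=te^{u_{00}t}$ into \eqref{E:LVE-0}: after factoring $e^{u_{00}t}$ the equation reduces to $1+(u_{00}+\mu)t+\beta e^{-u_{00}}(t-1)=0$, which is verified using $u_{00}+\mu=-1$ together with $\beta e^{-u_{00}}=-u_{00}-\mu=1$, both read off from the characteristic equation at $u_{00}$. In case (iii), no real root exists. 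Because the coefficients of \eqref{Chara-eq} are real, the two eigenvalues in $\Sigma\cap\Sigma_{0}$ provided by Lemma \ref{Spectrum-D_0F}(ii) form a complex conjugate pair $\lambda=x+iy$, $\overline{\lambda}=x-iy$, with $y\neq 0$ (no real roots) and $|y|<\pi$ (since $\lambda\in\Sigma_{0}$, by Lemma \ref{Spectrum-D_0F}(i)). Choosing $y>0$ without loss of generality and splitting the complex solutions $e^{\lambda t}$, $e^{\overline{\lambda}t}$ of \eqref{E:LVE-0} into real and imaginary parts yields the announced basis.

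The only calculation requiring care is the verification in case (ii) that $te^{u_{00}t}$ belongs to the generalized eigenspace, which hinges on the special identity $\beta e^{-u_{00}}=1$ valid exactly at the double root. All remaining steps are a routine convexity analysis of $g$, an intermediate-value argument for the sign of the real roots, and elementary real-imaginary decomposition of complex exponentials.
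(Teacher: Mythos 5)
Your argument is correct, but it is genuinely different from what the paper does: the paper disposes of this lemma by citation only (``See \cite[Chapter 5]{Wal-3}''), whereas you give a self-contained elementary proof. Your route is a convexity analysis of $g(\lambda)=\lambda+\mu+\beta e^{-\lambda}$ on the real line (unique critical point $\lambda^{*}=\ln\beta$, minimum value $\ln\beta+\mu+1$, whose sign is exactly the trichotomy $\beta e^{\mu}\lessgtr \frac1e$), combined with the multiplicity count of Lemma \ref{Spectrum-D_0F}(ii) to rule out any further eigenvalues in $\Sigma_0$; the case checks are all sound, including $u_2<\lambda^{*}<u_1<0$ via $g(0)=\mu+\beta>0$, the exact double root in case (ii) (where $g'(u_{00})=0$, $g''(u_{00})=\beta e^{-u_{00}}=1\neq 0$, and your identity $\beta e^{-u_{00}}=-u_{00}-\mu=1$ makes $t e^{u_{00}t}$ an honest solution of (\ref{E:LVE-0})), and the forced conjugate pair with $0<y<\pi$ in case (iii). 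What your approach buys is transparency and independence from Walther's memoir for this particular statement; what it still borrows is Lemma \ref{Spectrum-D_0F} itself (also cited from \cite{Wal-3}), which you use both for the total multiplicity $2$ in $\Sigma_0$ and for $\dim L=2$. One small point worth making explicit: to conclude that the exhibited solutions give a \emph{basis of $L$} you implicitly use the standard fact (Hale--Lunel, Chapter 7) that segments of solutions of the form $p(t)e^{\lambda t}$, $p$ polynomial of degree less than the multiplicity, span the generalized eigenspace of the generator associated with $\lambda$; once that is invoked, linear independence of your two functions plus $\dim L=2$ from Lemma \ref{Spectrum-D_0F}(iv) finishes the identification.
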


\begin{proof} See \cite[Chapter 5]{Wal-3}. 
\end{proof}

\section{Properties related to $k$-cones}

Recall that $\text{Cl}_{X}\mathcal{S}=\mathcal{S}\cup\{0\}$ and $\mathcal{S}^{1}=\mathcal{S}\cap X^1$. Thus, $\text{Cl}_{X^1}\mathcal{S}^{1}=\mathcal{S}^{1}\cup\{0\}$. 

\begin{lemma}\label{Int-bounary-S1} 

$$\begin{aligned}\text{{\rm Int}}_{X^1}\mathcal{S}^{1}=\{&\psi\in \mathcal{S}^1: \,\,\exists\,\,j\in \mathbb{N}\,\,\text{s.t.}\,\,(-1)^{j}\psi>0\,\,\text{on}\,\,[-1,0];\,\,\text{or,}\\ 
&\psi\,\, \text{possesses a unique zero}\,\,\tau\in[-1,0],\,\, \text{and}\,\,\tau\,\, \text{satisfies}\,\,\psi^{\prime}(\tau)\neq0.\}\end{aligned}$$

$$\begin{aligned}\partial_{X^1}\mathcal{S}^{1}=\big\{&\psi\in \mathcal{S}^1: \,\,\psi\,\, \text{possesses a zero}\,\,\tau\in [-1,0]\,\,\text{such that}\,\,\psi^{\prime}(\tau)=0;\\
&{\rm or\,\,else,\,there \,are \,at \,least \,\,two \,\,zeros\,such\, that \,any\,zero}\,\tau\, {\rm with}\,\psi^{\prime}(\tau)\neq 0,\\
&{\rm and \,one\, of\, zeros\, is}\,\, 0\, \,{\rm or}\,\,-1 \big\}\cup \{0\}.\end{aligned}$$ \end{lemma}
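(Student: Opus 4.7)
The plan is to prove the interior characterization first and deduce the boundary characterization from $\partial_{X^{1}}\mathcal{S}^{1}=\text{Cl}_{X^{1}}\mathcal{S}^{1}\setminus\text{Int}_{X^{1}}\mathcal{S}^{1}$ together with the closure identity $\text{Cl}_{X^{1}}\mathcal{S}^{1}=\mathcal{S}^{1}\cup\{0\}$ noted just before the lemma. The inclusion $\supseteq$ for the interior comes from a standard stability argument for the sign structure of $\psi$ under small $X^{1}$-perturbations, while the inclusion $\subseteq$ is handled by the contrapositive, in which, given $\psi\in\mathcal{S}^{1}$ with a degenerate zero, an $X^{1}$-approximation of $\psi$ by functions outside $\mathcal{S}^{1}\cup\{0\}$ is constructed.

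For $\supseteq$, first assume $(-1)^{j}\psi>0$ on all of $[-1,0]$. Compactness gives $m:=\min_{[-1,0]}|\psi|>0$, and any $\tilde{\psi}$ within $m$ of $\psi$ in the sup norm, hence in $\norm{\cdot}_{X^{1}}$, keeps the strict sign and so lies in $\mathcal{S}^{1}$. If instead $\psi$ has a unique zero $\tau\in[-1,0]$ with $\psi'(\tau)\neq 0$, continuity of $\psi'$ produces a relative neighborhood $U$ of $\tau$ in $[-1,0]$ on which $\psi'$ keeps a fixed sign and $|\psi'|\geq\delta_{1}>0$, while uniqueness of the zero yields $|\psi|\geq\delta_{2}>0$ on $[-1,0]\setminus U$. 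A $\tilde{\psi}$ with $\norm{\tilde{\psi}-\psi}_{X^{1}}<\min(\delta_{1},\delta_{2})$ is then strictly monotone on $U$ (so has at most one zero there) and nonvanishing of the same sign as $\psi$ off $U$; hence $\tilde{\psi}$ has at most one transversal zero and belongs to $\mathcal{S}^{1}$.

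For $\subseteq$, suppose $\psi\in\mathcal{S}^{1}$ possesses a zero $\tau\in[-1,0]$ with $\psi'(\tau)=0$; I construct $\psi_{n}\to\psi$ in $X^{1}$ with $\psi_{n}\notin\mathcal{S}^{1}\cup\{0\}$, which rules $\psi$ out of $\text{Int}_{X^{1}}\mathcal{S}^{1}$. The essential input is the flatness estimate $|\psi(t)|\leq\omega_{\psi}(\delta)\cdot|t-\tau|$ on $[\tau-\delta,\tau+\delta]\cap[-1,0]$, where $\omega_{\psi}(\delta):=\sup_{|s-\tau|\leq\delta,\,s\in[-1,0]}|\psi'(s)|\to 0$ as $\delta\to 0$ by the mean value theorem and the continuity of $\psi'$. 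Fix a $C^{1}$ unit bump $b$ supported in $[-1,1]$ with $b(0)=1$, and choose scales $\delta_{n}\to 0$ and heights $\varepsilon_{n}=\sqrt{\omega_{\psi}(\delta_{n})}\,\delta_{n}$, so that $\varepsilon_{n}/\delta_{n}\to 0$ makes the scaled bump $X^{1}$-negligible, while $\varepsilon_{n}\gg\omega_{\psi}(\delta_{n})\delta_{n}$ lets it dominate $|\psi|$ on $[\tau-\delta_{n},\tau+\delta_{n}]\cap[-1,0]$. In the generic situation I set $\psi_{n}=\psi-\varepsilon_{n}\,b((\cdot-\tau)/\delta_{n})$: this drives $\psi_{n}<0$ on a small interval around $\tau$, while $\psi_{n}=\psi$ off the support of the scaled bump, so together with the nonzero values of $\psi$ on both sides of $\tau$ one gets two forced sign changes of $\psi_{n}$, hence $\psi_{n}\notin\mathcal{S}$. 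I expect the main obstacle in the degenerate case where $\psi$ vanishes identically on a one-sided relative neighborhood of $\tau$, since then a unilateral down-bump is absorbed into the admissible sign parameter $j$ in the definition of $\mathcal{S}$ and $\psi_{n}$ stays in $\mathcal{S}$; the remedy is to replace the single bump by a pair of small opposite-sign $C^{1}$-bumps placed inside the zero-set of $\psi$, whose $+,-$ pattern combined with the nonzero sign of $\psi$ on the opposite side of $\tau$ still forces two genuine sign changes of $\psi_{n}$.

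With the interior equality established, the boundary equality follows immediately: $\partial_{X^{1}}\mathcal{S}^{1}=(\mathcal{S}^{1}\cup\{0\})\setminus\text{Int}_{X^{1}}\mathcal{S}^{1}$, the zero function $0$ lies trivially in $\partial_{X^{1}}\mathcal{S}^{1}$, and a nonzero $\psi\in\mathcal{S}^{1}$ lies in $\partial_{X^{1}}\mathcal{S}^{1}$ iff it fails both interior conditions, which under the sign-change constraint of $\mathcal{S}^{1}$ is exactly the stated condition that some zero of $\psi$ has vanishing derivative.
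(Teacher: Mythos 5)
Your route is the same as the paper's: a stability argument showing that strictly signed functions and functions with a unique transversal zero are interior, an explicit small-bump perturbation to push functions with a degenerate zero out of $\mathcal{S}^1$, and then the boundary formula obtained as $\mathrm{Cl}_{X^1}\mathcal{S}^1\setminus \mathrm{Int}_{X^1}\mathcal{S}^1$ using $\mathrm{Cl}_{X^1}\mathcal{S}^1=\mathcal{S}^1\cup\{0\}$. Your bump construction is in fact more explicit than the paper's one-line assertion for its Type II, and your remark about the case where $\psi$ vanishes identically on one side of $\tau$ is a real subtlety the paper ignores. One smaller issue inside that construction: the single centered down-bump also fails when $\tau$ is a degenerate \emph{sign-changing} zero (e.g.\ $\psi$ behaving like $(t-\tau)^3$), since then $\psi_n$ is negative to the left of the bump, negative on it, and positive to the right, which is still only one sign change; this case needs the same remedy you propose for the identically-vanishing case (an opposite-sign pair, or a single bump placed off-center in the flat region where $\psi$ is small and of one sign).

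The genuine gap is in the last step. The equivalence you invoke --- that a nonzero $\psi\in\mathcal{S}^1$ fails both interior conditions if and only if some zero of $\psi$ has vanishing derivative --- is false, because zeros at the endpoints $-1,0$ need not be sign changes. Take $\psi(\theta)=\theta(\theta+1)$: it lies in $\mathcal{S}^1$ (it is $\leq 0$ on $[-1,0]$), its only zeros are $-1$ and $0$ and both one-sided derivatives there are nonzero, so it has no degenerate zero; yet it fails both interior conditions (its zero is not unique), and it really is a boundary point, since $\psi+\epsilon$ for small $\epsilon>0$ is $C^1$-close to $\psi$ but has sign pattern $+,-,+$ and hence leaves $\mathcal{S}$. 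Consequently your contrapositive argument covers only part of $\mathcal{S}^1\setminus\mathrm{Int}_{X^1}\mathcal{S}^1$ (you must also show that functions with two or more zeros, all nondegenerate, are not interior --- a small constant shift of the appropriate sign does this), and the boundary identity does not ``follow immediately'': as displayed it omits precisely such functions, so the deduction cannot close as stated. Note that the paper's own proof has the same blind spot --- its Type III step asserts that a nondegenerate zero of a function with at most one sign change must be its unique zero, which the example above refutes --- so your proposal reproduces, rather than repairs, this defect; a complete argument has to treat the endpoint-zero configurations separately and adjust the boundary description accordingly.
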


\begin{proof} We classify the points of $\mathcal{S}^{1}$ into the following three types. 

{\it Type I: $\psi\in\mathcal{S}^{1}$ such that $(-1)^j\psi>0$ on $[-1,0]$ for some $j\in\mathbb{N}$.}

Then, there is a $m>0$ such that $(-1)^j\psi>m$ on the interval $[-1,0]$. Let $\text{Int}_{X^1} B_{\frac{m}{2}}^{X^1}(\psi)=\{\xi\in X^1:\,\,\norm{\xi-\psi}_{X^1}<\frac{m}{2}\}$. Hence, $(-1)^j\xi>\frac{m}{2}$ for any $\xi\in \text{Int}_{X^1} B_{\frac{m}{2}}^{X^1}(\psi)$, and $\psi$ is an interior point of $\mathcal{S}^{1}$ in the space $X^1$. 

{\it Type II: $\psi\in\mathcal{S}^{1}$ possessing a zero $\tau\in [-1,0]$ such that $\psi^{\prime}(\tau)=0$.} 

It is clear that one can find a sesquence of points $\{\psi_n\}_{n=1}^{\infty}$ with sign change more than tiwce around $\tau$ such that $\lim\limits_{n\rightarrow \infty}\norm{\psi_n-\psi}_{X^1}=0$, and hence, $\psi\in \partial_{X^1}\mathcal{S}^{1}$. 

{\it Type III: $\psi\in\mathcal{S}^{1}$ possessing a zero $\tau\in [-1,0]$ such that $\psi^{\prime}(\tau)\neq0$, which is not of Type II.}

Since $\psi$ has at most one sign change, there is a unique zero $\tau\in [-1,0]$ with the derivative $\psi^{\prime}(\tau)\neq0$; or else, there are at least two zeros such\ that any zero $\tau$ with $\psi^{\prime}(\tau)\neq 0$ and one of zeros is 0 or -1. For the later case, it is clear that one can find two sign change functions $\psi_n$ on $[-1,0]$ such that $\lim\limits_{n\rightarrow +\infty}\norm{\psi_n-\psi}_{X^1}=0$, and hence $\psi\in\partial_{X^1}\mathcal{S}^{1}$. For the previous case, together with $\psi$ being not of Type II, there is a $j\in \mathbb{N}^+$ such that $(-1)^j\psi<0$ in $[-1,\tau)$ and $(-1)^j\psi>0$ in $(\tau,0]$. Here, we treat $[-1,\tau)=\emptyset$ if $\tau=-1$, and $(\tau,0]=\emptyset$ if $\tau=0$. Let $m=(-1)^j\psi^{\prime}(\tau)$, and clearly, $m>0$. We only discuss the subcase $\tau\in(-1,0)$ and the arguments for other subcases are similiar. Clearly, one can find $s\in (0,1)$ such that $(-1)^j\psi^{\prime}(t)>\frac{m}{2}$ for any $t\in(\tau-s,\tau+s)\subset [-1,0]$. Then, $\min\limits_{t\in[-1,\tau-s]\cup[\tau+s,0]}\{\mid\psi(t)\mid\}$ exists and is greater than 0. Let $\tilde{m}=\min\{\frac{m}{2}, \min\limits_{t\in[-1,\tau-s]\cup[\tau+s,0]}\{\mid\psi(t)\mid\}\}$ and $\text{Int}_{X^1}B^{X^1}_{\frac{\tilde{m}}{2}}(\psi)=\{\xi\in X^1:\,\,\norm{\xi-\psi}_{X^1}<\frac{\tilde{m}}{2}\}$. Then, $\text{Int}_{X^1}B^{X^1}_{\frac{\tilde{m}}{2}}(\psi)\subset \mathcal{S}^{1}$ and hence, $\psi\in \text{Int}_{X^1}\mathcal{S}^{1}$.

Clearly, $0\in \partial \mathcal{S}^{1}$, $\text{Cl}_{X^1}\mathcal{S}^1=\text{Cl}_{X}\mathcal{S}\cap X^{1}$ and $\text{Cl}_{X}\mathcal{S}=\mathcal{S}\cup \{0\}$. Therefore, we have the conclusions in this lemma.
\end{proof}

Let $L$ and $Q$ be the 2-dimensional and 2-codimensional generalized eigenspaces mentioned in Lemma \ref{Spectrum-D_0F}, which satisfies $X=L\oplus Q$. Hereafter, we denote by $\Pi_L$ the projection onto $L$ along $Q$ and then, $\Pi_Q=I-\Pi_L$ is the projection onto $Q$ along $L$. Let $Q^1=Q\cap X^1$. Recall that $L\subset X^1$. Then, $X^1=L\oplus Q^1$ and $\Pi_L\mid_{X^1}$ (resp. $\Pi_Q\mid_{X^1}$) are the projection onto $L$ along $Q^1$ (resp. onto $Q^1$ along $L$). For simplicity, we also write $\Pi_L\mid_{X^1}$ (resp. $\Pi_Q\mid_{X^1}$) as $\Pi_L$ (resp. $\Pi_Q$), when the phase space is $X^1$.

\begin{lemma}\label{S-2-cone} ${\rm Cl}_{X}\mathcal{S}$ is a complemented solid 2-cone in $X$, and ${\rm Cl}_{X^{1}}\mathcal{S}^{1}$ is a complemented 2-solid cone in $X^1$. 
\end{lemma}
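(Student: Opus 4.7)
The plan is to verify the four required ingredients in turn: that $\mathrm{Cl}_X\mathcal{S}$ (resp.\ $\mathrm{Cl}_{X^1}\mathcal{S}^1$) is a closed cone, that its rank is exactly $2$, that it is solid (resp.\ $2$-solid), and that it is complemented. The first point is immediate, since both sets are closed by construction and negating a function preserves its sign-change pattern, so $\mathbb{R}\cdot \mathrm{Cl}_X\mathcal{S}\subset \mathrm{Cl}_X\mathcal{S}$, and likewise in $X^1$.

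For the rank, the lower bound is delivered by the $2$-dimensional subspace $L$ from Lemma \ref{Spectrum-D_0F}(iv), which lies in $X^1\subset X$. Running through the three alternatives of Lemma \ref{Sigma0} shows that every nonzero $\psi\in L$ has at most one sign change on $[-1,0]$: in case (i), $ae^{u_1 t}+be^{u_2 t}$ has at most one zero by strict monotonicity of $e^{(u_1-u_2)t}$; in case (ii), $(a+bt)e^{u_{00}t}$ has at most one zero; in case (iii), $e^{xt}(a\sin yt+b\cos yt)$ has at most one zero on $[-1,0]$ because $0<y<\pi$ forces the sinusoidal half-period $\pi/y$ to exceed the interval length $1$. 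For the upper bound, suppose for contradiction that $V\subset \mathrm{Cl}_X\mathcal{S}$ has $\dim V=3$ with basis $\{u_1,u_2,u_3\}$, and for each triple $-1\leq t_1<t_2<t_3\leq 0$ consider the evaluation $E_{t_1,t_2,t_3}:V\to\mathbb{R}^3$. If some triple makes $E$ surjective, pull back $(1,-1,1)$ to obtain $\phi\in V$ with at least two sign changes by the intermediate value theorem, contradicting $\phi\in \mathrm{Cl}_X\mathcal{S}$. Otherwise $\det(u_i(t_j))=0$ for every triple, which forces the curve $t\mapsto (u_1(t),u_2(t),u_3(t))$ to lie in a $2$-dimensional subspace of $\mathbb{R}^3$ and hence delivers a nontrivial linear relation among $u_1,u_2,u_3$, a contradiction.

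For solidity in $X$, the constant function $\phi_0\equiv 1$ is interior: the ball $\{\psi\in X:\|\psi-\phi_0\|<1/2\}$ consists of strictly positive functions, hence sits in $\mathcal{S}$. For $2$-solidity in $X^1$, take $W=L$. In addition to having at most one zero on $[-1,0]$, any nonzero $\psi\in L$ has every zero simple: a direct differentiation in each of the three cases of Lemma \ref{Sigma0}, using the relation $\psi(\tau)=0$ itself, yields $\psi'(\tau)\neq 0$. Lemma \ref{Int-bounary-S1} then places $L\setminus\{0\}$ inside $\mathrm{Int}_{X^1}\mathcal{S}^1$, giving $2$-solidity.

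The natural candidate for the $2$-codimensional complement is $H^c=Q$ in $X$ (and $Q^1=Q\cap X^1$ in $X^1$), which are $2$-codimensional by Lemma \ref{Spectrum-D_0F}(iv). The hardest step, and the genuine obstacle of the proof, is to establish $Q\cap\mathcal{S}=\emptyset$. My intended route is the Mallet-Paret--Walther discrete Lyapunov functional $V$ associated with the linearised equation (\ref{E:LVE-0}): $V$ is non-increasing along $D_0F_t$, satisfies $V\leq 1$ on $\mathcal{S}$ by the definition of $\mathcal{S}$, and by the spectral description of Lemma \ref{Spectrum-D_0F} satisfies $V\geq 3$ on $Q\setminus\{0\}$ because every nonzero element of $Q$ is generated by modes with $|\mathrm{Im}\,\lambda|>2\pi$. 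A $\phi\in Q\cap\mathcal{S}$ would then satisfy $V(\phi)\leq 1$ and $V(\phi)\geq 3$ simultaneously, producing the required contradiction. The delicate point is precisely the lower bound $V\geq 3$ on $Q\setminus\{0\}$: ruling out that a clever superposition of high-frequency eigenmodes produces a function with fewer than three sign changes is what requires the full Mallet-Paret theory and forms the main obstacle of the argument.
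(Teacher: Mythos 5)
Your overall architecture matches the paper's: the complement is taken to be $Q$ (resp.\ $Q^1$), solidity in $X$ comes from the cone of positive functions, and $2$-solidity in $X^1$ comes from $L\setminus\{0\}\subset{\rm Int}_{X^1}\mathcal{S}^1$, verified exactly as you do via the case analysis of Lemma \ref{Sigma0} (at most one zero, and simple) combined with Lemma \ref{Int-bounary-S1}. One place where you genuinely diverge, to your advantage: you prove the rank bound $\leq 2$ directly, by the evaluation-map/determinant argument pulling back $(1,-1,1)$, which is elementary, self-contained and correct. The paper never argues this separately; it gets the bound for free from $Q\cap{\rm Cl}_X\mathcal{S}=\{0\}$ together with the fact that $Q$ has codimension $2$ (any $3$-dimensional subspace of ${\rm Cl}_X\mathcal{S}$ would meet $Q$ nontrivially). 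So your extra work is redundant once complementedness is known, but it is a nice independent check.

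The one genuine gap is the step you yourself flag: $Q\cap\mathcal{S}=\emptyset$. As written, your argument does not establish the lower bound ``$V\geq 3$ on $Q\setminus\{0\}$''; you only indicate that it should follow from Mallet-Paret--Walther theory. Note also that the heuristic ``every nonzero element of $Q$ is generated by modes with $|\mathrm{Im}\,\lambda|>2\pi$'' is not literally available: $Q$ is the closed spectral complement of $L$ (the range of $I-\Pi_L$), not the linear span of high-frequency eigenfunctions, so one cannot argue element-by-element over finite superpositions without an additional completeness/semicontinuity argument. The paper does not prove this fact either; it imports it wholesale as \cite[Lemma 5.1]{Wal-3}, in the form $0\notin\Pi_L\mathcal{S}$, which is equivalent to $Q\cap\mathcal{S}=\emptyset$ and immediately gives $Q\cap{\rm Cl}_X\mathcal{S}=\{0\}$ and $Q^1\cap{\rm Cl}_{X^1}\mathcal{S}^1=\{0\}$. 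So the missing piece is a known, citable result rather than something you would need to rebuild from the discrete Lyapunov functional; to make your proposal complete you should either quote Walther's lemma at this point, as the paper does, or carry out the Lyapunov-functional argument in full, which is substantially more work than the rest of the lemma.
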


\begin{proof} By the definition of $\mathcal{S}$, one can see that $\kappa \mathcal{S}= \mathcal{S}$ for any $\kappa\in \mathbb{R}\setminus\{0\}$ and $\mathcal{S}^{1}$ holds the same property. Clearly, $L\setminus\{0\}\subset \mathcal{S}^{1}$. By virtue of \cite[Lemma 5.1]{Wal-3}, one has $0\notin \Pi_L\mathcal{S}$. It implies that $Q\cap \mathcal{S}=\emptyset$ and $Q\cap {\rm Cl}_{X}\mathcal{S}=\{0\}$. Since $K^+=\{\psi\in X:\,\psi>0 \,\,\text{on}\,\,[-1,0]\}\subset {\rm Int}_X\mathcal{S}$, ${\rm Cl}_{X}\mathcal{S}$ is a complemented solid 2-cone in $X$. Note that ${\rm Cl}_{X^{1}}\mathcal{S}^{1}={\rm Cl_{X}}\mathcal{S}\cap X^1$. Thus, $Q\cap {\rm Cl}_{X^{1}}\mathcal{S}^{1}=\{0\}$. By Lemma \ref{Sigma0}, for any $\phi\in L\setminus\{0\}$, the number of potential zeros of $\phi$ is at most one, and more, $\phi^{'}(\theta)\neq 0$ if $\theta\in[-1,0]$ is the unique zero of $\phi\in L\setminus\{0\}$. It follows from Lemma \ref{Int-bounary-S1} that $\phi \in{\rm\text{Int}}_{X^1}\mathcal{S}^{1}$, and hence, $L\setminus\{0\}\subset {\rm\text{Int}}_{X^1}\mathcal{S}^{1}$. Together with $X=L\oplus Q$, $Q\cap {\rm Cl}_{X^{1}}\mathcal{S}^{1}=\{0\}$ and $X^1\subset X$, we obtain that ${\rm\text{Cl}}_{X^1}\mathcal{S}^1$ is a complemented 2-solid cone in $X^1$.
\end{proof}

\begin{lemma}\label{norm-cone-S} There is a 2-solid cone $\mathcal{C}$ satisfying $\mathcal{C} \setminus \{0\}\subset {\rm Int}_{X^1}\mathcal{S}^1$ in $X^1$, which is formulated as 

$$\mathcal{C}=\{\psi\in X^1:\,\,\norm{\Pi_Q \psi}_{X^1}\leq \kappa\norm{\Pi_L \psi}_{X^1}\}$$ for some $\kappa>0$.
\end{lemma}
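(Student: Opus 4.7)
The plan is to construct $\mathcal{C}$ explicitly as $\mathcal{C}_\kappa := \{\psi \in X^1 : \norm{\Pi_Q \psi}_{X^1} \leq \kappa \norm{\Pi_L \psi}_{X^1}\}$ for a suitably small $\kappa > 0$, and verify the three required properties: (a) $\mathcal{C}_\kappa$ is a closed cone of rank $2$; (b) $\mathcal{C}_\kappa$ is $2$-solid; (c) $\mathcal{C}_\kappa \setminus \{0\} \subset \text{Int}_{X^1}\mathcal{S}^1$. As a preliminary, I note that $\Pi_L$ and $\Pi_Q$ are continuous on $X^1$: they are continuous on $X$ by Lemma \ref{Spectrum-D_0F}, and on the finite-dimensional subspace $L \subset X^1$ the $X^1$- and $X$-norms are equivalent, so continuity transfers via the inclusion $X^1 \hookrightarrow X$. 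Closedness of $\mathcal{C}_\kappa$ and its homogeneity $\mathbb{R} \cdot \mathcal{C}_\kappa \subset \mathcal{C}_\kappa$ then follow immediately from the homogeneous form of the defining inequality.

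For (a), $L \subset \mathcal{C}_\kappa$ (since $\Pi_Q v = 0$ for $v \in L$) shows the maximum subspace dimension in $\mathcal{C}_\kappa$ is at least $2$. Conversely, any linear subspace $W \subset X^1$ with $\dim W \geq 3$ must intersect $Q^1 = \ker \Pi_L|_{X^1}$ (codimension $2$ in $X^1$) in a nonzero vector $w$, and $w \in \mathcal{C}_\kappa$ would force $\norm{w}_{X^1} \leq \kappa \cdot 0 = 0$, contradicting $w \neq 0$. For (b), any $v \in L \setminus \{0\}$ satisfies the strict inequality $0 = \norm{\Pi_Q v}_{X^1} < \kappa \norm{\Pi_L v}_{X^1}$, so continuity of the projections and the norm yields an $X^1$-neighborhood of $v$ lying entirely inside $\mathcal{C}_\kappa$; hence $L \setminus \{0\} \subset \text{Int}_{X^1}\mathcal{C}_\kappa$, as required for $2$-solidness.

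The substantive step is (c), which I will derive from a compactness argument. Lemma \ref{S-2-cone} (in particular, the argument in its proof) yields $L \setminus \{0\} \subset \text{Int}_{X^1}\mathcal{S}^1$, which is open in $X^1$. The unit sphere $S_L := \{v \in L : \norm{v}_{X^1} = 1\}$ is compact since $\dim L = 2$, and the $1$-Lipschitz function $v \mapsto \text{dist}_{X^1}(v,\, \partial_{X^1}\mathcal{S}^1)$ is strictly positive on $S_L$ (as $S_L$ is disjoint from the closed set $\partial_{X^1}\mathcal{S}^1$). It therefore attains a uniform positive lower bound $r > 0$ on $S_L$, and the open ball $B_{X^1}(v, r)$ is contained in $\text{Int}_{X^1}\mathcal{S}^1$ for every $v \in S_L$.

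Now fix any $\kappa \in (0, r)$ and take $\psi \in \mathcal{C}_\kappa \setminus \{0\}$. Necessarily $\Pi_L \psi \neq 0$, for otherwise the defining inequality forces $\Pi_Q \psi = 0$, giving $\psi = 0$. Writing $\psi = a(v + w)$ with $a := \norm{\Pi_L \psi}_{X^1} > 0$, $v := \Pi_L \psi / a \in S_L$, and $w := \Pi_Q \psi / a \in Q^1$ satisfying $\norm{w}_{X^1} \leq \kappa < r$, one obtains $v + w \in B_{X^1}(v, r) \subset \text{Int}_{X^1}\mathcal{S}^1$. Since the ``at-most-one-sign-change'' condition defining $\mathcal{S}^1$ is invariant under nonzero scalar multiplication, $\psi = a(v + w) \in \text{Int}_{X^1}\mathcal{S}^1$, completing (c). The main obstacle is precisely the uniformity required in (c) --- producing a single $\kappa$ that simultaneously controls every $\psi \in \mathcal{C}_\kappa$; this is exactly what the finite dimensionality of $L$ (hence compactness of $S_L$) delivers, and without it the passage from the pointwise statement $L \setminus \{0\} \subset \text{Int}_{X^1}\mathcal{S}^1$ to a genuine ``tube'' neighborhood would fail.
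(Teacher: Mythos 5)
Your proof is correct and follows essentially the same route as the paper: both rely on $L\setminus\{0\}\subset{\rm Int}_{X^1}\mathcal{S}^1$ (from Lemma \ref{S-2-cone}), compactness of the unit sphere of the two-dimensional space $L$ in the $X^1$-norm to obtain a uniform tube radius inside ${\rm Int}_{X^1}\mathcal{S}^1$, and then the rescaling $\psi=a(v+w)$ with $\kappa$ chosen below that radius. Your additional verifications (closedness, rank exactly $2$, $2$-solidity, boundedness of $\Pi_L,\Pi_Q$ on $X^1$) are fine and merely make explicit what the paper leaves implicit.
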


\begin{proof} It follows from the arguments in the proof of Lemma \ref{S-2-cone} that $L\setminus \{0\}\subset \text{Int}_{X^1}\mathcal{S}^1$, which is 2-dimensional. Hence,  $L_1=\{\psi\in L: \norm{\psi}_{X^1}=1\}$ is compact in $X^1$. Then, there is a $\kappa>0$ such that $\cup_{\psi\in L_1}B^{X^1}_{\kappa}(\psi)\subset \text{Int}_{X^1}\mathcal{S}^1$, where $B^{X^1}_{\kappa}(\psi)=\{\tilde{\psi}\in X^1:\,\norm{\tilde{\psi}-\psi}_{X^1}\leq\kappa\} $. Hence, $\mathcal{C} \setminus \{0\}\subset \text{Int}_{X^1}\mathcal{S}^1$.
\end{proof}

\begin{lemma}\label{monotonicity} \item[(i)] $F_t (\mathcal{S})\subset \mathcal{S}$ for any $t\in \mathbb{R}^+$ and $F_t (\mathcal{S}^1)\subset \mathcal{S}^1$ for any $t\geq 1$. Moreover, ones have that $F_t({\rm Cl}_{X}\mathcal{S}\setminus\{0\})\subset{\rm Int}_{X^1}\mathcal{S}^1$ for any $t\geq 3$.

\item[(ii)] $F_t(\tilde{\phi})-F_t(\phi)\in {\rm Cl}_{X}\mathcal{S}\setminus \{0\}$ for any $t\geq 0$, and $F_t(\tilde{\phi})-F_t(\phi)\in {\rm Int}_{X^1}\mathcal{S}^{1}$ for any $t\geq 3$ whenever $\tilde{\phi}-\phi\in {\rm Cl}_{X}\mathcal{S}\setminus \{0\}$.

\item[(iii)] $F_t$ is {\rm SOP} w.r.t. ${\rm Cl}_X\mathcal{S}$.

\item[(iv)] Let $\phi\in X$. Then, $D_{\phi}F_t \,X\subset X^1$ for $t\geq 1$; and more, $D_{\phi}F_t (\mathcal{S})\subset \mathcal{S}$ for any $t\in\mathbb{R}^+$, $D_{\phi}F_t (\mathcal{S})\subset \mathcal{S}^1$ for any $t\geq 1$, and $D_{\phi}F_t (\mathcal{S})\subset {\rm Int}_{X^1}\mathcal{S}^1$ for any $t\geq 3$.
\end{lemma}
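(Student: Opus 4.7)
All four assertions flow from one template: along any linear delay equation
\[
y'(t) = -\mu\, y(t) + b(t)\, y(t-1), \qquad b(t) < 0,
\]
the number of sign changes of $y$ on any unit interval is non-increasing in $t$, and the zeros of $y_t$ become transverse after a bounded waiting period. I would reduce each of (i)--(iv) to this template.

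The first step is (ii). Setting $y(t):=x^{\tilde\phi}(t)-x^\phi(t)$, the mean value theorem gives
\[
y'(t) = -\mu\, y(t) + b(t)\, y(t-1), \qquad b(t):=\int_0^1 f'\bigl(s\, x^{\tilde\phi}(t-1)+(1-s)\, x^\phi(t-1)\bigr)\,ds,
\]
with $b(t)<0$ everywhere because $f'<0$, and with $y_0=\tilde\phi-\phi\in{\rm Cl}_{X}\mathcal{S}\setminus\{0\}$. A direct analysis of zeros: if $y(t_0)=0$ and $y(t_0-1)\neq 0$, then $y'(t_0)=b(t_0)\,y(t_0-1)\neq 0$ is a simple crossing whose direction is opposite to the sign of $y(t_0-1)$; window-by-window, this together with the injectivity of the linear delay flow rules out the appearance of a second sign change in any unit window. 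Hence $y_t=F_t(\tilde\phi)-F_t(\phi)\in{\rm Cl}_{X}\mathcal{S}\setminus\{0\}$ for all $t\geq 0$, lifting to $\mathcal{S}^1$ for $t\geq 1$ by Lemma~\ref{Le: embeding}. Part (i) is then immediate by specializing $\phi=0$: $f(0)=0$ gives $F_t(0)=0$, so $F_t(\tilde\phi)=F_t(\tilde\phi)-F_t(0)\in\mathcal{S}$ whenever $\tilde\phi\in\mathcal{S}$. Part (iv) is the same template applied to the variational equation~(\ref{E:LVE}), whose coefficient $f'(x^\phi(t-1))$ is negative, using the injectivity of $D_\phi F_t$ from Lemma~\ref{Le: injectivity and compactness}(ii); the embedding $D_\phi F_t(X)\subset X^1$ for $t\geq 1$ is immediate from~(\ref{E:LVE}).

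For the transversality upgrade ``$t\geq 3$'' I would argue as follows. Suppose $y_t\in\mathcal{S}^1$ has a zero $\tau\in[-1,0]$ with $y'(t+\tau)=0$; the equation forces $b(t+\tau)\,y(t+\tau-1)=0$, hence $y(t+\tau-1)=0$. Re-applying the reasoning at time $t-1\geq 2$ forces $y(t+\tau-2)=0$, and iteration builds a sequence of zeros of $y$ spaced exactly by the delay $1$. For $t\geq 3$ this produces at least two zeros within a single unit window, which, combined with the single-sign-change structure of $y_s$ inherited from $y_0\in\mathcal{S}$, forces $y$ to vanish on a whole subinterval of length $1$; injectivity then contradicts $y_0\neq 0$. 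Thus the zero is simple, and Lemma~\ref{Int-bounary-S1} places $y_t$ in ${\rm Int}_{X^1}\mathcal{S}^1$, completing the interior assertions in (i), (ii), (iv).

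Finally, for (iii), monotonicity is exactly (ii). For the SOP condition, given $\phi,\psi\in X$ with $\phi-\psi\in{\rm Cl}_{X}\mathcal{S}\setminus\{0\}$, the previous step yields $F_3(\phi)-F_3(\psi)\in{\rm Int}_{X^1}\mathcal{S}^1$. Moreover $F_3:X\to X^1$ is continuous: the identity $(F_t\phi)'(\theta)=-\mu(F_t\phi)(\theta)+f((F_{t-1}\phi)(\theta))$ for $t\geq 1$, combined with~(\ref{reprentation-semiflow}), shows that $F_3\phi_n\to F_3\phi$ in $X^1$ whenever $\phi_n\to\phi$ in $X$. Pulling back an $X^1$-open neighborhood of $F_3(\phi)-F_3(\psi)$ lying inside $\mathcal{S}^1$ produces $X$-open neighborhoods $\mathcal{U}$ of $\phi$ and $\mathcal{V}$ of $\psi$ with $F_3(\mathcal{U})-F_3(\mathcal{V})\subset\mathcal{S}^1\subset{\rm Cl}_{X}\mathcal{S}$. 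For $t\geq 3$, writing $F_t(u)-F_t(v)=F_{t-3}(F_3 u)-F_{t-3}(F_3 v)$ and invoking the monotonicity of (ii) yields $F_t(\mathcal{U})\thicksim F_t(\mathcal{V})$. The main obstacle is the transversality claim at $t\geq 3$: while the non-increase of sign changes is standard, ruling out degenerate zeros via backward propagation requires careful sign bookkeeping and repeated use of injectivity.
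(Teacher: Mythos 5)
Your overall strategy is the same as the paper's: reduce everything to the zero/sign-change structure of a scalar delay equation with negative delayed feedback, prove the difference statement (ii) first (the paper proves (i) directly and notes (ii) and (iv) follow by the same argument applied to the equations for $e^{\mu t}(x^{\tilde\phi}-x^{\phi})$ and for the variational equation), and obtain (iii) exactly as you do, from the continuity of $F_3\colon X\to X^1$ together with the strong ordering $F_3(\tilde\phi)-F_3(\phi)\in{\rm Int}_{X^1}\mathcal{S}^1$. The only structural difference is cosmetic: the paper first substitutes $y(t)=e^{\mu t}x(t)$ so that the instantaneous term $-\mu y(t)$ disappears and $y'(t)=g(t,y(t-1))$ with $\xi g(t,\xi)<0$, which is precisely the normalization that makes the sign bookkeeping below clean.

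However, your argument for the crucial upgrade ``$t\geq 3$ implies the segment lies in ${\rm Int}_{X^1}\mathcal{S}^1$'' has a genuine gap. From $y(t+\tau)=0$ and $y'(t+\tau)=0$ you correctly get $y(t+\tau-1)=0$, but ``re-applying the reasoning at time $t-1$'' to conclude $y(t+\tau-2)=0$ is not justified: the reasoning needs $y'(t+\tau-1)=0$, which does not follow from $y(t+\tau-1)=0$ alone (the zero at $t+\tau-1$ could be transversal). Moreover, even granting zeros spaced by $1$, the inference ``two zeros in a unit window plus at most one sign change forces $y$ to vanish on a whole unit subinterval'' is false as stated: a segment that is nonnegative with zeros at both endpoints satisfies both hypotheses without vanishing identically. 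The correct mechanism is different: with $z(s)=e^{\mu s}y(s)$ one has $z'(s)=e^{\mu s}b(s)y(s-1)$, so if $y$ has one sign on $[t+\tau-2,\,t+\tau-1]$, then $z$ is monotone on $[t+\tau-1,\,t+\tau]$ with equal (zero) endpoint values, forcing $z'\equiv 0$ there and hence $y\equiv 0$ on $[t+\tau-2,\,t+\tau-1]$, which contradicts injectivity of $F_t$ (resp. $D_\phi F_t$); and the residual case in which $y$ changes sign once in $[t+\tau-2,\,t+\tau-1]$ is not eliminated by this window alone and must be excluded by tracking one more unit window — this is exactly why the waiting time is $3$ rather than $2$. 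So the degenerate-zero exclusion, which is the only nontrivial content of the interior assertions in (i), (ii), (iv) and hence of the SOP property (iii), needs to be reworked along these lines rather than by your backward iteration.
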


\begin{proof} {\it (i)} Let $x^{\phi}$ be the solution of Equation $(\ref{E:DDE-sys-mu})$ with the initial data $\phi\in X$. Let $y^{\tilde{\phi}}(t)=e^{\mu t}x^{\phi}(t)$ with $t\in[-1,+\infty)$, and for any $t\in\mathbb{R}^+$, $y^{\tilde{\phi}}_t(\cdot)=e^{\mu (t+\cdot)}x^{\phi}(t+\cdot)$ on the interval $[-1,0]$. Then, $y^{\tilde{\phi}}$ is the unique solution with the initial data $\tilde{\phi}=y^{\tilde{\phi}}_0$ of the following equation 

\begin{equation}\label{DDE-y}y^{'}(t)=e^{\mu t}f(e^{-\mu(t-1)}y(t-1))\,\,{\rm for\,\,any}\,\,t\geq 0.\end{equation}  Denote $g(t,\xi)=e^{\mu t}f(e^{-\mu(t-1)}\xi)$. By the assumptions on $f$ in $(\ref{E:DDE-sys-mu})$, one has that \begin{equation}\label{Cg}g(t, 0)=0\,\, \text{for any} \,\,t\in \mathbb{R}^+,\,\,\text{and}\,\,\xi \cdot g(t,\xi)<0\,\,\text{for any}\,\, (t,\xi)\in \mathbb{R}^+\times (\mathbb{R}\setminus\{0\}).\end{equation} It then follows that $y^{\tilde{\phi}}_t\in\mathcal{S}$ for any $t\geq0$ if $\phi\in\mathcal{S}$. Thus,

$$F_t (\mathcal{S})\subset \mathcal{S}\,\,\,\text{for any}\,\, t\geq 0.$$ If $\psi\in X^1$, then $x^{\psi}_t\in X^1$ for any $t\geq1$. Recall that $\mathcal{S}\cap X^1=\mathcal{S}^1$. One has that 

$$F_t(\mathcal{S}^1)\subset \mathcal{S}^1\,\,\text{for any}\,\,t\geq 1.$$

\noindent For any $\psi\in {\rm\text{Cl}}_{X}\mathcal{S}\setminus\{0\}$, suppose without loss of generality that

$$\begin{aligned} &\psi(s)> 0\,\,\text{on}\,\,[-1,0];\,\,\text{or}\\ 
& \psi(s)\geq0\,\,\text{for any}\,\, s\in[-1,z],\,{\rm and} \,\psi(s)\leq0\,\,\,\text{for any} \,\,s\in[z, 0],\,\text{where}\,\,z\in[-1,0].\end{aligned}$$ It then follows from (\ref{DDE-y}), (\ref{Cg}) and Lemma \ref{Int-bounary-S1} that $y_t^{\tilde{\psi}}\in {\rm\text{Int}}_{X^1}\mathcal{S}^1$ for any $t\geq 3$. Therefore, $F_t({\rm\text{Cl}}_{X}\mathcal{S}\setminus\{0\})\subset{\rm\text{Int}}_{X^1}\mathcal{S}^1$ for any $t\geq 3$.

{\it (ii)} Let $x^{\tilde{\phi}}$ and $x^{\phi}$ be the solution of (\ref{E:DDE-sys-mu}) with distinct initial data $\tilde{\phi}$ and $\phi\in X$ respectively. Let $y(t)=e^{\mu t}(x^{\tilde{\phi}}-x^{\phi})(t)$. Then, $y(t)$ satisfies 

\begin{equation}\label{Eq-y-m} y^{'}(t)=e^{\mu t}\big[ f(e^{-\mu(t-1)}y(t-1)+x^{\phi}(t-1))-f(x^{\phi}(t-1))\big]\,\,{\rm for\,\, any}\,\,t\geq 0.\end{equation} Let $\hat{h}(t,\xi)=e^{\mu t}\big[ f(e^{-\mu(t-1)}\xi+x^{\phi}(t-1))-f(x^{\phi}(t-1))\big]$ for any $(t,\xi)\in\mathbb{R}^+\times \mathbb{R}$. The assumptions of $f$ in (\ref{E:DDE-sys-mu}) imply that (\ref{Cg}) holds by letting $g=\hat{h}$. Then, by the same arguments in (i), we can prove that $F_t(\tilde{\phi})-F_t(\phi)\in {\rm\text{Cl}}_{X}\mathcal{S}\setminus\{0\}$ for any $t\geq0$, and $F_t(\tilde{\phi})-F_t(\phi)\in {\rm\text{Int}}_{X^1}\mathcal{S}^{1}$ for any $t\geq 3$, whenever $\tilde{\phi}, \phi \in X$ such that $\tilde{\phi}-\phi\in {\rm\text{Cl}}_{X}\mathcal{S}\setminus \{0\}$. 

{\it (iii)} By Lemma \ref{Le: embeding}, one has $F_t(X)\subset X^1$ for any $t\geq 1$. Then, for any $t\geq 1$, one can well define the map $\tilde{F}_{t}: X \mapsto X^1$ by $\tilde{F}_{t}(\phi)=F_t(\phi)$ for any $\phi\in X$. By (\ref{E:DDE-sys-mu}), one has $(\tilde{F}_{1}(\phi))^{\prime}(\theta)=-\mu\tilde{F}_{1}(\phi)(\theta)+f(\phi(\theta))$ for any $\phi\in X$ and $\theta\in[-1,0]$. Together with the continuity of $F_t\,(t\in\mathbb{R}^+)$ and the $C^1$-smoothness of $f$, one has that $\tilde{F}_{1}$ is continuous. Note that $\tilde{F}_{3}=\tilde{F}_{1}\circ\tilde{F}_{1}\circ\tilde{F}_{1}$. Then, $\tilde{F}_{3}$ is continuous. By (ii), $F_t$ is monotone w.r.t. ${\rm\text{Cl}}_{X}\mathcal{S}$, and $\tilde{F}_3(\tilde{\phi})-\tilde{F}_3(\phi)=F_3(\tilde{\phi})-F_3(\phi)\in {\rm\text{Int}}_{X^1}\mathcal{S}^{1}$ whenever $\tilde{\phi}-\phi\in {\rm\text{Cl}}_{X}\mathcal{S}\setminus \{0\}$. The continuity of $\tilde{F}_3$ implies that there are open neighborhoods of $\mathcal{U}$ of $\tilde{\phi}$ and $\mathcal{V}$ of $\phi$ in $X$ such that $\mathcal{U}\cap \mathcal{V}=\emptyset$ and $F_3(\mathcal{U})\thicksim F_3(\mathcal{V})$ w.r.t. ${\rm\text{Cl}}_{X}\mathcal{S}$. Thus, $F_t$ is SOP w.r.t. ${\rm\text{Cl}}_{X}\mathcal{S}$.

{\it (iv)} Given $\phi\in X$. Recall that $v_{\phi}^{\xi}$ is the solution of (\ref{E:LVE}) with the initial data $\xi\in X$. Together with (\ref{DFt}), one has that $D_{\phi}F_t (X)\subset X^1$ for any $t\geq 1$. Let $w_{\phi}^{\tilde{\xi}}(t)=e^{\mu t} v_{\phi}^{\xi}(t)$ with $t\in[-1,\infty)$ and $\tilde{\xi}(\theta)=e^{\mu \theta}\xi(\theta)$ with $\theta\in[-1,0]$. Then, $w_{\phi}^{\tilde{\xi}}$ is the solution with the initial data $\tilde{\xi}$ of the following equation \begin{equation}\label{VEE} w^{\prime}(t)=e^{\mu}f^{\prime}(x^{\phi}(t-1))w(t-1).\end{equation} Let $\tilde{h}(t,\eta)=e^{\mu}f^{\prime}(x^{\phi}(t-1))\eta$. Recall $f^{\prime}<0$. Then, (\ref{Cg}) holds by letting $g=\tilde{h}$. By the same arguments in (i), we can prove that $D_{\phi}F_{t} (\mathcal{S})\subset \mathcal{S}$ for any $t\geq 0$, $D_{\phi}F_{t} (\mathcal{S})\subset \mathcal{S}^{1}$ for any $t\geq 1$ and $D_{\phi}F_{t} (\mathcal{S})\subset {\rm\text{Int}}_{X^1}\mathcal{S}^{1}$ for any $t\geq 3$. Thus, we have proved (iv).
\vskip 2mm
Therefore, we have completed the proof.
\end{proof}

By Lemma \ref{Le: embeding}, $F_t(X)\subset X^1$ for any $t\geq1$. By Lemma \ref{monotonicity}(iv), $D_{\phi}F_t \,X\subset X^1$ for any $(t,\phi)\in[1,\infty)\times X$. Together with (\ref{E:DDE-sys-mu}) (resp., (\ref{E:LVE})), $C^1$-smoothness of $f$, and the continuity of $F_t$ for any $t\geq 0$ (resp., $D_{\phi}F_t$ for any $t\geq 0$ and $\phi\in X$),  the mapping $F_t\mid_{X^1}: X^1\mapsto X^1$ for any $t\geq 1$ (resp., $D_{\phi}F_t\mid_{X^1}: X^1\mapsto X^1$ for any $t\geq 1$ and $\phi\in X$) are continuous. For simplicity, we hereafter write $F_t\mid_{X^1}$ for any $t \geq1$ (resp., $D_{\phi}F_t\mid_{X^1}$ for any $t \geq1$ and $\phi\in X$) as $F_t$ (resp., $D_{\phi}F_t$). By Lemma \ref{Le: dissipativity}, $F_t$ is injective for any $t \geq0$. We hereafter write the inverse of $F_t$ on its image as $F_{-t}$ for any $t>0$. 

\begin{lemma}\label{Local-manifold-0} Let $\mathcal{C}$ be the $2$-solid cone mentioned in Lemma \ref{norm-cone-S}. Then, there exist numbers $\gamma \in\mathbb{R}, \epsilon >0$ and a $C^1$-smooth map $\overline{F}_1$ on $X^1$ with invariant manifolds $\mathcal{W}_L$, $\mathcal{W}_Q$ w.r.t. $\overline{F}_1$ such that:

\item[(i)] $\overline{F}_1=F_1$ on $B_{\epsilon}^{X^1}(0)$ and $\overline{F}_1=D_0F_1$ on $X^1\setminus B_{2\epsilon}^{X^1}(0)$, where $B_{j\epsilon}^{X^1}(0)$ is the closed ball centred at $0$ with radius $j\epsilon, j=1,2$ in $X^1$;
 
\item[(ii)] there is a $C^1$-smooth function $h_L: L\mapsto Q^1$ such that $h_{L}(0)=0$, the derivative $Dh_{L}(0)=0$ and $\mathcal{W}_L=\cap_{n\in\mathbb{N}} \overline{F}_{n}(\mathcal{C})=\{\tilde{\phi}\in X^1:\,\,\tilde{\phi}=\phi+h_{L}(\phi),\,\,\phi \in  L\}$; moreover, $\lim\limits_{n\rightarrow \infty}\norm{e^{\gamma n}\overline{F}_{-n}(\phi)}_{X^1}=0$ for any $\phi\in \mathcal{W}_L$.
\item[(iii)]  there is a $C^1$-smooth function $h_Q: Q^1\mapsto L$ such that $h_{Q}(0)=0$, the derivative $Dh_{Q}(0)=0$ and $\mathcal{W}_Q=\cap_{n\in\mathbb{N}} \overline{F}_{-n}((X^1\setminus {\rm Int}_{X^1}\mathcal{C}))=\{\tilde{\phi}\in X^1:\,\,\tilde{\phi}=\phi+h_{Q}(\phi),\,\,\phi \in Q^1\}$; moreover, $\lim\limits_{n\rightarrow +\infty}\norm{e^{-\gamma n}\overline{F}_{n}(\phi)}_{X^1}=0$ for any $\phi\in \mathcal{W}_Q$. 
\end{lemma}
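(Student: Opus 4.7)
The plan is a Hadamard graph transform at the fixed point $0$, using the spectral gap from Lemma~\ref{Spectrum-D_0F}(iii) for exponential rates and the $2$-solid cone $\mathcal{C}$ of Lemma~\ref{norm-cone-S} as the ``unstable-cone trap.'' Fix $\gamma\in\mathbb{R}$ with
\[
\max\{{\rm Re}\,\lambda:\lambda\in\cup_{k\geq 1}(\Sigma\cap\Sigma_k)\}<\gamma<\min\{{\rm Re}\,\lambda:\lambda\in\Sigma\cap\Sigma_0\},
\]
and replace $\norm{\cdot}_{X^1}$ by an equivalent norm adapted to the splitting $X^1=L\oplus Q^1$ so that, as operator norms, $\norm{D_0F_1|_{Q^1}}<e^{\gamma}$ and $\norm{(D_0F_1|_L)^{-1}}<e^{-\gamma}$. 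All cone-invariance and contraction estimates will be run in this norm; the cone $\mathcal{C}$, after possibly enlarging $\kappa$, keeps the properties of Lemma~\ref{norm-cone-S}.

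For (i), decompose $F_1(\phi)=D_0F_1\phi+R(\phi)$ with $R\in C^1(X^1,X^1)$, $R(0)=0$, $DR(0)=0$. Pick a $C^1$-smooth bump $\chi:X^1\to[0,1]$ equal to $1$ on $B_{\epsilon}^{X^1}$ and vanishing off $B_{2\epsilon}^{X^1}$, built from a scalar $C^1$ bump composed with a smoothed surrogate of the $X^1$-norm. Set $\overline{F}_1(\phi)=D_0F_1\phi+\chi(\phi)R(\phi)$. Then $\overline{F}_1\in C^1(X^1,X^1)$ coincides with $F_1$ on $B_{\epsilon}^{X^1}$ and with $D_0F_1$ off $B_{2\epsilon}^{X^1}$; for $\epsilon$ small, the nonlinear term $\chi R$ has global Lipschitz constant and $C^0$-norm on $X^1$ as small as desired.

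For (ii), the spectral gap forces $D_0F_1(\mathcal{C})\subset\mathcal{C}$ with strict contraction of the cone-ratio $\norm{\Pi_Q\psi}_{X^1}/\norm{\Pi_L\psi}_{X^1}$, and for $\epsilon$ small this persists for $\overline{F}_1$, which remains injective on $\mathcal{C}$. The classical graph transform, applied on the complete metric space of Lipschitz sections $L\to Q^1$ of Lipschitz constant $\leq\kappa$, exhibits $\mathcal{W}_L=\bigcap_{n\geq 0}\overline{F}_n(\mathcal{C})={\rm graph}(h_L)$ for a Lipschitz $h_L:L\to Q^1$; uniqueness of the fibre over each $\phi\in L$ follows from the cone condition, since two distinct candidates would produce a nonzero difference in $\mathcal{C}$ with zero $L$-projection. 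A fibre-contraction (Hirsch--Pugh--Shub / Irwin) argument on the induced cocycle of candidate tangent planes upgrades $h_L$ to $C^1$ with $Dh_L(0)=0$, using the $C^1$-smoothness of $\overline{F}_1$ and $D\overline{F}_1(0)=D_0F_1$. The decay $\norm{e^{\gamma n}\overline{F}_{-n}(\phi)}_{X^1}\to 0$ on $\mathcal{W}_L$ comes from $e^{\gamma}\norm{(D_0F_1|_L)^{-1}}<1$, transferred to $\overline{F}_1|_{\mathcal{W}_L}$ through the $C^1$-conjugacy afforded by the graph. Part (iii) is dual: $\overline{F}_{-1}$ sends $X^1\setminus{\rm Int}_{X^1}\mathcal{C}$ into itself, and the same graph transform over $Q^1$ produces $\mathcal{W}_Q={\rm graph}(h_Q)$ with $h_Q\in C^1$, $h_Q(0)=0$, $Dh_Q(0)=0$; the forward estimate $\norm{e^{-\gamma n}\overline{F}_n(\phi)}_{X^1}\to 0$ on $\mathcal{W}_Q$ follows from $e^{-\gamma}\norm{D_0F_1|_{Q^1}}<1$ and the smallness of the perturbation.

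The main obstacle is the $C^1$-regularity of $h_L$ and $h_Q$ together with the vanishing of their derivatives at $0$: the graph transform itself yields only Lipschitz graphs, and the upgrade to $C^1$ requires a careful fibre-contraction argument on the derivative bundle, in which uniform contraction of the derivative iterates must be extracted from the adapted-norm spectral bounds and from the $C^1$-smoothness of $\overline{F}_1$. A secondary technical point is securing cone invariance of $\overline{F}_1$ globally on $X^1$ (not only near $0$), which reduces to choosing $\epsilon$ so small that the $C^0$-size of $\chi R$ is dominated by the cone-opening margin of $D_0F_1$ in the adapted norm.
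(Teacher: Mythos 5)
Your overall scheme (cut off the nonlinearity so that the map becomes a globally small $C^1$ perturbation of $D_0F_1$, then run a Hadamard graph transform with a fibre-contraction upgrade to $C^1$) is the standard route, and the paper ultimately delegates exactly this part to the cited results of Hirsch--Pugh--Shub and Bates--Jones. The genuine gap is in your construction of $\overline{F}_1$: you cut off \emph{in the phase space}, via a ``$C^1$-smooth bump $\chi:X^1\to[0,1]$ built from a scalar bump composed with a smoothed surrogate of the $X^1$-norm.'' No such object exists. The space $X^1=C^1[-1,0]$ contains an isomorphic copy of $C[-1,0]$ (via $g\mapsto\int_{-1}^{\cdot}g$), hence is separable with nonseparable dual and is not an Asplund space; by the Ekeland--Lebourg theorem a Banach space admitting a Fr\'echet-differentiable bump function must be Asplund, so $X^1$ admits no Fr\'echet $C^1$ bump, and any Fr\'echet-$C^1$ ``smoothed surrogate of the norm'' would immediately produce one. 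Consequently $\overline{F}_1=D_0F_1+\chi R$ cannot be defined this way as a $C^1$ map, and the smallness claims for $\chi R$ (global Lipschitz constant, domination by the cone-opening margin) have nothing to rest on; the later graph-transform and fibre-contraction steps, which need $\overline{F}_1\in C^1$ with a globally small nonlinear part, therefore do not get off the ground.

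The paper's proof avoids this obstruction by cutting off at the level of the scalar nonlinearity: it first verifies that $F_1$ is $C^1$ on $X^1$ (via the estimate $\norm{D_{\tilde{\psi}}F_1-D_{\psi}F_1}_{L(X^1)}\leq(2+\mu)\max_{\theta}\abs{f^{\prime}(\tilde{\psi}(\theta))-f^{\prime}(\psi(\theta))}$), then modifies $f:\mathbb{R}\to\mathbb{R}$ by an ordinary smooth cut-off on the real line so that the modified delay equation has time-one solution map $\overline{F}_1$ agreeing with $F_1$ near $0$ and with the linearization at large amplitudes, and finally quotes the pseudo-stable/pseudo-unstable manifold theorems of Hirsch--Pugh--Shub and Bates--Jones for (ii)--(iii), including the cone characterizations and the exponential rates governed by the spectral gap of Lemma \ref{Spectrum-D_0F}. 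So to repair your argument you should perform the truncation on $f$ (where smooth bumps trivially exist) rather than on $X^1$, and only then run your graph transform; the remainder of your outline (adapted norm realizing the gap around $\gamma$, cone trapping, fibre contraction for $C^1$ regularity and $Dh_L(0)=Dh_Q(0)=0$) is a legitimate, self-contained substitute for the citations, modulo the routine check that the scalar truncation indeed yields property (i) for balls in the $X^1$-norm.
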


\begin{proof} Recall that $v^{\xi}_{\phi}$ is the unique solution of Equation (\ref{E:LVE}) with the initial point $\xi\in X$ for any $\phi\in X$. Let $S^1=\{\psi \in X^1: \norm{\psi}_{X^1}=1\}$. By (\ref{DFt})-(\ref{Chain-DFt}), one has that for any $\tilde{\psi},\psi\in X^1$, 
\begin{equation}\begin{aligned} &\norm{D_{\tilde{\psi}}F_1-D_{\psi}F_1 }_{L(X^1)}=\sup\limits_{\xi\in S^1}\{ \norm{D_{\tilde{\psi}}F_1\xi-D_{\psi}F_1\xi}_{X^1}\}\\
&=\sup\limits_{\xi\in S^1}\{\max\limits_{\theta\in[-1,0]}\{\abs{v^{\xi}_{\tilde{\psi}}(1+\theta)-v^{\xi}_{\psi}(1+\theta)}\}+\max\limits_{\theta\in[-1,0]}\{\abs{(v^{\xi}_{\tilde{\psi}})^{\prime}(1+\theta)-(v^{\xi}_{\psi})^{\prime}(1+\theta)}\}\}\\
&\overset{(\ref{E:LVE})}{\leq} (1+\mu)\sup\limits_{\xi\in S^1}\{\max\limits_{\theta\in[-1,0]}\{\abs{v^{\xi}_{\tilde{\psi}}(1+\theta)-v^{\xi}_{\psi}(1+\theta)}\}\}+\max\limits_{\theta\in[-1,0]}\{\abs{f^{\prime}(\tilde{\psi}(\theta))-f^{\prime}(\psi(\theta))}\}\\
&\overset{(\ref{DF-norm-difference-1})}{\leq}(2+\mu)\max\limits_{\theta\in[-1,0]}\{\abs{f^{\prime}(\tilde{\psi}(\theta))-f^{\prime}(\psi(\theta))}\}.
\end{aligned}\end{equation} Clearly, $f^{\prime}$ is uniformly continuous on any nonempty bounded closed interval in $\mathbb{R}$. Then, $D_{(\cdot)}F_1: X^1\mapsto L(X^1)$ is continuous. Thus, $F_1$ is $C^1$-smooth in $X^1$. By using the technology of smooth cut-off function for the function $f$ in Equation (\ref{E:DDE-sys-mu}), we can obtain a new equation such that its solution map restricted on $X^1$ is the desired map $\overline{F}_1$.

Therefore, this lemma is implied by \cite[Theorem 5.1, Corollary 5.3]{Hir-P-S} and also \cite[Theorem 2.1-2.2]{Bat-Jo}. 
\end{proof}

\section{The open dense conjecture on enventually slowly oscillating solutions}

Recall that $\mathcal{D}_{ES}=\{\phi\in X:\,x^{\phi}\,\,\text{is an eventually slowly oscillating solution of Equation (\ref{E:DDE-sys-mu})}\}$. Denote

$$K^+=\{\psi\in X: \psi(\theta)>0\,\,\text{for all}\,\,\theta\in[-1,0]\},\quad K^-=\{\psi\in X: \psi(\theta)<0\,\,\text{for all}\,\,\theta\in[-1,0]\}.$$ Clearly, $K^+, K^-\subset \text{Int}_{X}\mathcal{S}$. Let $\phi\in X$.  Denote

$$\mathcal{M}_{\phi}=\{\tilde{\phi}\in X: \tilde{\phi}\thicksim \phi\,\,\text{and}\,\,\tilde{\phi} \neq\phi \}.$$

\subsection{Pseudo-ordered semiorbits and enventually slowly oscillating solutions}
\begin{lemma}\label{solu-orbit-1}
\item[(i)] $\phi\in \mathcal{D}_{ES}$ if and only if there exists a $t_0>0$ such that $x^{\phi}_{t_0}\in \mathcal{S}$.
\item[(ii)] If $\phi\in \mathcal{S}$, $O^+(\phi)$ is pseudo-ordered. Consequently, $O^+(\phi)$ is pseudo-ordered if $\phi\in \mathcal{D}_{ES}$.
\end{lemma}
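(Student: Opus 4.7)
The plan is to deduce both items from Lemma~\ref{monotonicity}(i) (which keeps $\mathcal{S}$ positively invariant and sends $\mathrm{Cl}_X\mathcal{S}\setminus\{0\}$ into $\mathrm{Int}_{X^1}\mathcal{S}^1$ after time~$3$) together with the Type~(a)/Type~(b) zero dichotomy from Lemma~\ref{global-solution}. I prove (i) first, and then treat (ii) by specializing to the orbit's long-time behavior in each dichotomy branch.

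For the forward direction of (i), fix an interval $[a,\infty)$ on which $x^\phi$ is slowly oscillating and observe that for any $t_0 > a+1$ the window $[t_0-1,t_0]$ has length~$1$ and hence can contain at most one zero of $x^\phi$, since two zeros in it would sit at distance $\leq 1$ and violate slow oscillation. The case $x^\phi\equiv 0$ on this window is ruled out because it would produce uncountably many zeros within arbitrarily small distance inside $[a,\infty)$, contradicting slow oscillation. Thus $x^\phi_{t_0}\in\mathcal{S}$. For the reverse direction, Lemma~\ref{monotonicity}(i) yields $x^\phi_t = F_{t-t_0}(x^\phi_{t_0})\in\mathrm{Int}_{X^1}\mathcal{S}^1$ for every $t\geq t_0+3$; by Lemma~\ref{Int-bounary-S1} such a segment has at most one zero in $[-1,0]$, with nonzero derivative if present. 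If distinct zeros $\tau_1<\tau_2$ of $x^\phi$ satisfied $\tau_2 \geq t_0+3$ and $\tau_2-\tau_1 \leq 1$, then the segment $x^\phi_{\tau_2}$ would carry zeros both at $\tau_1-\tau_2\in[-1,0)$ and at $0$, a contradiction. Hence $x^\phi$ is slowly oscillating on $[t_0+3,\infty)$, and $\phi\in\mathcal{D}_{ES}$.

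For (ii), given $\phi\in\mathcal{S}$, Lemma~\ref{monotonicity}(i) gives $x^\phi_t\in\mathcal{S}$ for every $t\geq 0$, so (i) puts $\phi$ in $\mathcal{D}_{ES}$. I then split along Lemma~\ref{global-solution}. In \emph{Type~(a)} I may assume $x^\phi>0$ on $[T_0-1,\infty)$; equation~(\ref{E:DDE-sys-mu}) together with $f'<0$ and $f(0)=0$ forces $(x^\phi)'(t)<0$ for $t\geq T_0$, so $x^\phi$ is strictly decreasing on $[T_0,\infty)$, and for $T_0+1\leq t<s$ the strict pointwise inequality $x^\phi_t(\theta)>x^\phi_s(\theta)$ gives $F_t(\phi)-F_s(\phi)\in\mathcal{S}\subset\mathrm{Cl}_X\mathcal{S}$. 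In \emph{Type~(b)} I enumerate the eventual zeros $z_1<z_2<\cdots$ inside the slowly oscillating tail so that $z_{i+1}-z_i>1$; at each such zero $(x^\phi)'(z_i)=f(x^\phi(z_i-1))\neq 0$, because $z_i-1\in(z_{i-1},z_i)$ lies in a sign-constant interval on which $x^\phi$ cannot vanish (the invariance $F_t(\mathcal{S})\subset\mathcal{S}$ forbids a segment from being identically $0$). So every $z_i$ is a sign-changing simple zero, and the consecutive segments $x^\phi_{z_i}$ and $x^\phi_{z_{i+1}}$ are single-signed on $[-1,0]$ with opposite signs, whence their difference is single-signed and nontrivial, hence in $\mathcal{S}$. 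In either case $O^+(\phi)$ contains two distinct ordered points. The consequence for $\phi\in\mathcal{D}_{ES}$ is then immediate: (i) delivers $t_0>0$ with $x^\phi_{t_0}\in\mathcal{S}$, and the first half of (ii) applied to $x^\phi_{t_0}$ furnishes two ordered points inside $O^+(x^\phi_{t_0})\subset O^+(\phi)$.

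The main delicacy is the Type~(b) bookkeeping: one must use the separation $z_{i+1}-z_i>1$ both to push the previous zero outside the windows $[z_i-1,z_i]$ and $[z_{i+1}-1,z_{i+1}]$ (forcing both segments to be single-signed on all of $[-1,0]$) and to combine the DDE identity $(x^\phi)'(z_i)=f(x^\phi(z_i-1))$ with $f'<0$ to conclude that those two signs are genuinely opposite. Tracking these and invoking $F_t(\mathcal{S})\subset\mathcal{S}$ to rule out identical vanishing on subintervals are the only nonroutine steps.
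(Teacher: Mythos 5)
Your proposal is correct and follows essentially the same route as the paper: part (i) via the definition of slow oscillation in one direction and Lemma \ref{monotonicity}(i) together with the characterization of ${\rm Int}_{X^1}\mathcal{S}^1$ in Lemma \ref{Int-bounary-S1} in the other, and part (ii) via the same dichotomy (eventually sign-constant tail giving a strictly ordered pair, versus unbounded zeros giving two segments near zeros whose difference is single-signed). The only cosmetic difference is that the paper compares the segments at a zero $t_0$ and at $t_0+1$, whereas you compare segments at two consecutive zeros $z_i,z_{i+1}$; both yield an ordered pair in $O^+(\phi)$ with respect to ${\rm Cl}_X\mathcal{S}$.
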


\begin{proof} {\it (i)} It follows from Lemma \ref{Int-bounary-S1} and \ref{monotonicity}(i) that if $\phi\in \mathcal{S}$, then $x^{\phi}$ is an eventually slowly oscillating solution. Conversely, the conclusion is directly indicated by the definition of eventually slowly oscillating solutions.

{\it (ii)}  {\it Consider the case: the number of zeros in $[-1,+\infty)$ of $x^{\phi}$ is finite}. One can find a time $t_0>0$ such that the sign of $x^{\phi}(t)$, denoted by $\text{sign}(x^{\phi}(t))$, is the same as $\text{sign}(x^{\phi}(t_0))$ for any $t>t_0$. Together with (\ref{E:DDE-sys-mu}), one has that $x^{\phi}$ is strictly increasing (or decreasing) on $[t_0+1,+\infty)$. Furthermore, $F_{t_0+4}(\phi)(\theta)-F_{t_0+2}(\phi)(\theta)>0\,\, (\text{or} <0)$ for any $\theta\in[-1,0]$. Thus, $O^{+}(\phi)$ is a pseudo-ordered semiorbit.

{\it Consider the other case: the number of zeros in $[-1,+\infty)$ of $x^{\phi}$ is infinite}. By virtue of Lemma \ref{monotonicity}(i), there exists a $t_0\geq 3$ such that the following items hold: (a). $F_{t_0}(\phi), F_{t_0+1}(\phi)\in \text{Int}_{X^1}\mathcal{S}^1$; (b). $x^{\phi}(t_0)=0$ and $(x^{\phi})^{'}(t_0)\neq 0$; (c). $F_{t_0}(\phi)< 0$ on $[-1,0)$ and $F_{t_0+1}(\phi)> 0$ on $(-1,0]$. Moreover, $F_{t_0+1}(\phi)-F_{t_0}(\phi)>0$ on $[-1,0]$. It implies that $F_{t_0+1}(\phi)-F_{t_0}(\phi)\in \text{Int}_{X^1} \mathcal{S}^1$. So, $O^{+}(\phi)$ is a pseudo-ordered semiorbit.

Therefore, we have completed the proof.
\end{proof}

\begin{lemma}\label{OS-D} If $\omega(\phi)\cap \mathcal{S}\neq \emptyset$, then $\phi\in\mathcal{D}_{ES}$ and $O^+(\phi)$ is pseudo-ordered.
\end{lemma}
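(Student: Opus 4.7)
The plan is to show that there exists $t_0 > 0$ with $F_{t_0}(\phi) \in \mathcal{S}$ and then invoke Lemma \ref{solu-orbit-1} (both parts) to conclude. Pick $\psi \in \omega(\phi) \cap \mathcal{S}$ together with an increasing sequence $t_n \to +\infty$ such that $F_{t_n}(\phi) \to \psi$ in $X$. Since $\psi \in \mathrm{Cl}_X \mathcal{S} \setminus \{0\}$, Lemma \ref{monotonicity}(i) applied at $t = 3$ yields
$$F_3(\psi) \in \mathrm{Int}_{X^1}\mathcal{S}^1.$$
The candidate time will be $t_0 = t_n + 3$ for $n$ sufficiently large.

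The essential step is to upgrade the convergence from the $X$-topology to the $X^1$-topology after applying $F_3$, because the openness we want to exploit is an $X^1$-openness, not an $X$-openness. For this I would argue that $F_1: X \to X^1$ is continuous: for $\phi_n \to \phi$ in $X$, the integral representation (\ref{reprentation-semiflow}) gives uniform convergence of the values, while the pointwise identity $(F_1(\phi))'(\theta) = -\mu F_1(\phi)(\theta) + f(\phi(\theta))$ on $[-1,0]$ together with the $C^1$-smoothness of $f$ gives uniform convergence of the derivatives. Composing with the continuity of $F_2|_{X^1}: X^1 \to X^1$ (stated in the discussion just after Lemma \ref{Local-manifold-0}) shows that $F_3 = F_2 \circ F_1: X \to X^1$ is continuous. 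Hence
$$F_{t_n + 3}(\phi) = F_3(F_{t_n}(\phi)) \longrightarrow F_3(\psi) \quad \text{in } X^1.$$

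Since $\mathrm{Int}_{X^1}\mathcal{S}^1$ is open in $X^1$ and contains $F_3(\psi)$, for all sufficiently large $n$ we obtain $F_{t_n + 3}(\phi) \in \mathrm{Int}_{X^1}\mathcal{S}^1 \subset \mathcal{S}$. Setting $t_0 = t_n + 3$ for such an $n$, Lemma \ref{solu-orbit-1}(i) gives $\phi \in \mathcal{D}_{ES}$, and Lemma \ref{solu-orbit-1}(ii) then gives that $O^+(\phi)$ is pseudo-ordered.

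The only delicate point is the topology mismatch just described: the a priori convergence coming from the $\omega$-limit set sits in $X$, while the regularizing statement we rely on (namely $F_3$ sending $\mathrm{Cl}_X\mathcal{S}\setminus\{0\}$ into $\mathrm{Int}_{X^1}\mathcal{S}^1$) and the associated openness both live in $X^1$. Once the $X \to X^1$ continuity of $F_1$ (and hence of $F_3$) is made explicit, the remainder is a direct assembly of Lemma \ref{monotonicity}(i) and Lemma \ref{solu-orbit-1}.
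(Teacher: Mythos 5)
Your proof is correct and takes essentially the same route as the paper: the paper gets the required neighborhood by citing Lemma \ref{monotonicity}(iii) (the SOP property applied to the ordered pair $\psi\thicksim 0$, with $0$ the equilibrium), while you unfold that same mechanism by hand using Lemma \ref{monotonicity}(i) together with the $X\to X^1$ continuity of $F_3$ — precisely the ingredients the paper uses to prove (iii). In both cases the conclusion is then reached by producing a time $t_0$ with $F_{t_0}(\phi)\in\mathcal{S}$ and invoking Lemma \ref{solu-orbit-1}, so your argument is sound.
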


\begin{proof} Take $\psi\in \omega(\phi)\cap \mathcal{S}$. Note that $0$ is the unique equilibrium of $F_t$. It then follows from Lemma \ref{monotonicity}(iii) that there is an open neighborhood $\mathcal{U}$ of $\psi$ and a $t_0>0$ such that $F_t(\mathcal{U})\subset \mathcal{S}$ for any $t\geq t_0$. Hence, there is a $t_1>0$ such that $F_{t}(\phi)\in \mathcal{S}$ for any $t\geq t_1+t_0$. By Lemma \ref{solu-orbit-1}, $\phi\in\mathcal{D}_{ES}$ and $O^+(\phi)$ is pseudo-ordered.

Therefore, we have completed the proof.
\end{proof}

\begin{lemma}\label{Hans-Otto} Let $\tilde{\phi}, \phi\in X$ satisfy
\item[(i)] $O^+(\tilde{\phi}), O^+(\phi)$ are bounded by a positive number, denoted by $\max\limits_{\mathcal{A}\mid_{{\rm Cl}_{X}\mathcal{S}}}\norm{\psi}$;
\item[(ii)] there is a $t_0>0$ such that $F_{t_0}(\tilde{\phi})-F_{t_0}(\phi)\in K^+\cup K^-$.
Then there exists a constant $\delta_A>0$ such that for all $t\geq t_0+2$, one has 

\begin{equation}\label{Ratio-LQ}\begin{aligned}&\quad\norm{F_t(\tilde{\phi})-F_t(\phi)}\leq \delta_A \norm{\Pi_L\big(F_t(\tilde{\phi})-F_t(\phi)\big)} \quad\text{and hence,}\\ 
&\quad \frac{\norm{\Pi_Q\big(F_t(\tilde{\phi})-F_t(\phi)\big)}}{\norm{\Pi_L\big(F_t(\tilde{\phi})-F_t(\phi)\big)}}\leq (1+\delta_A).
\end{aligned}\end{equation} 
\end{lemma}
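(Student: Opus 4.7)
My plan is to argue by contradiction: if the desired ratio bound fails along some sequence $t_n \geq t_0 + 2$, I will renormalize, extract a limit by compactness, and contradict the complementation $Q \cap \text{Cl}_X\mathcal{S} = \{0\}$ from Lemma \ref{S-2-cone}.

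To set up, I would write $w(t) := x^{\tilde\phi}(t) - x^\phi(t)$, which by the mean-value theorem satisfies the linear delay equation $w'(t) = -\mu w(t) + b(t) w(t-1)$ with $b(t) = \int_0^1 f'(s x^{\tilde\phi}(t-1) + (1-s) x^\phi(t-1))\,ds$. Hypothesis (i) bounds both orbits by some $R$, and the continuity of $f' < 0$ yields constants $0 < m \leq M$ with $b(t) \in [-M, -m]$ for $t \geq 1$. Applying Lemma \ref{monotonicity}(ii) to the shifted pair $F_{t_0}(\tilde\phi), F_{t_0}(\phi)$ (using $w_{t_0} \in K^+ \cup K^- \subset \text{Cl}_X\mathcal{S} \setminus \{0\}$), one concludes $w_t \in \text{Cl}_X\mathcal{S} \setminus \{0\}$ for all $t \geq t_0$; by Lemma \ref{S-2-cone} this gives $\Pi_L w_t \neq 0$, so the ratio $\norm{w_t}/\norm{\Pi_L w_t}$ is pointwise finite.

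Next I would suppose for contradiction that there exist $t_n \geq t_0 + 2$ with $\rho_n := \norm{w_{t_n}}/\norm{\Pi_L w_{t_n}} \to \infty$. Uniform boundedness $\norm{w_{t_n}} \leq 2R$ forces $\norm{\Pi_L w_{t_n}} \to 0$. From the equation, $\norm{w_t}_{X^1}$ is uniformly bounded for $t \geq 1$, so by the compact embedding $X^1 \hookrightarrow X$ one extracts $w_{t_n} \to w^*$ in $X$ along a subsequence. If $w^* \neq 0$, then $w^* \in \text{Cl}_X\mathcal{S} \setminus \{0\}$ (closedness) with $\Pi_L w^* = 0$ (continuity of $\Pi_L$), violating Lemma \ref{S-2-cone}. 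The case $\norm{w_{t_n}} \to 0$ is what I expect to be the main obstacle.

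To handle $\norm{w_{t_n}} \to 0$, I would renormalize to $u_n := w_{t_n}/\norm{w_{t_n}}$ (so $\norm{u_n} = 1$, $u_n \in \text{Cl}_X\mathcal{S}$, $\norm{\Pi_L u_n} \to 0$) and seek a convergent subsequence. Precompactness in $X$ reduces via Arzel\`a--Ascoli to uniform $X^1$-boundedness of $u_n$; from $u_n'(\theta) = -\mu u_n(\theta) + b(t_n + \theta)\, w(t_n + \theta - 1)/\norm{w_{t_n}}$, this further reduces to uniformly bounding $\norm{w_{t_n - 1}}/\norm{w_{t_n}}$. This ``no-excessive-contraction'' property for solutions of the linear DDE that remain in $\text{Cl}_X\mathcal{S}$ is the crux of the proof: the forward Gronwall $\norm{w_s} \leq (1+M)\norm{w_{s-1}}$ gives only one direction, and the naive backward identity $w(s-1) = (w'(s)+\mu w(s))/b(s)$ (using $|b| \geq m$) does not close in one step. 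I would establish the missing direction by a secondary contradiction-plus-compactness argument: if the ratio blew up, renormalize instead by $\norm{w_{t_n - 1}}$, control the ``past'' segment $w_{t_n - 2}/\norm{w_{t_n - 1}}$ via the forward Gronwall, extract a weak-$\ast$ limit of the coefficients $b(t_n+\cdot)$ together with an $X$-limit of the renormalized solution, and obtain a nontrivial solution of a limit linear DDE whose time-one shift sends a nonzero element of $\text{Cl}_X\mathcal{S}$ to $0$, contradicting the injectivity of the linearized flow (by the argument of Lemma \ref{Le: injectivity and compactness}(ii) applied to the limit equation). With the bound $\norm{w_{t_n - 1}}/\norm{w_{t_n}} \leq K$ in hand, Arzel\`a--Ascoli produces $u_{n_k} \to u$ in $X$ with $\norm{u}=1$, $u \in \text{Cl}_X\mathcal{S}$, $\Pi_L u = 0$, again contradicting Lemma \ref{S-2-cone}. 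The second inequality in \eqref{Ratio-LQ} follows at once from $\norm{\Pi_Q w_t} = \norm{w_t - \Pi_L w_t} \leq \norm{w_t} + \norm{\Pi_L w_t} \leq (\delta_A + 1)\norm{\Pi_L w_t}$.
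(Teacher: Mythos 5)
Your proposal is a from-scratch argument, whereas the paper disposes of this lemma by citation: its proof just invokes Walther's a priori estimate (\cite[Proposition 6.1]{Wal-3}). Attempting a self-contained proof is legitimate, and you correctly isolate where the difficulty lies, but the step you offer for that difficulty does not work. The crux is the backward bound $\norm{w_{t-1}}\leq K\norm{w_t}$ for the difference $w$, needed to get equicontinuity of the renormalized segments $u_n=w_{t_n}/\norm{w_{t_n}}$. Your secondary contradiction argument renormalizes by $\norm{w_{t_n-1}}$ and claims to ``control the past segment $w_{t_n-2}/\norm{w_{t_n-1}}$ via the forward Gronwall''. The forward Gronwall inequality gives $\norm{w_{t_n-1}}\leq C\norm{w_{t_n-2}}$, i.e.\ a \emph{lower} bound on the past segment; what you need for equicontinuity of $w_{t_n-1}/\norm{w_{t_n-1}}$ is an \emph{upper} bound $\norm{w_{t_n-2}}\leq C\norm{w_{t_n-1}}$, which is exactly the backward estimate you are trying to prove, shifted by one step. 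So the secondary compactness argument is circular and never produces the nontrivial limit segment whose existence you want to contradict with backward uniqueness.

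Nor can this be repaired by passing to weak-$\ast$ limits in place of Arzel\`a--Ascoli: segments in ${\rm Cl}_X\mathcal{S}$ of unit sup-norm may be spike-like (a thin bump of height $1$), and such sequences converge weak-$\ast$ to $0$, so no nonzero limit solution of the limiting linear DDE is obtained. The same spike examples show that $\inf\{\norm{\Pi_L u}:u\in\mathcal{S},\ \norm{u}=1\}=0$, so the conclusion cannot follow from the cone geometry ($Q\cap{\rm Cl}_X\mathcal{S}=\{0\}$) plus soft compactness alone; one must use quantitatively that the $w_t$ are segments of a slowly oscillating solution of the linear variational-type equation with coefficient $b(t)\in[-M,-m]$, $m>0$ (e.g.\ pointwise estimates between consecutive zeros spaced more than one apart), which is precisely the content of Walther's Proposition 6.1 that the paper cites. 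A further, smaller point: even if your scheme were completed, the contradiction sequence is drawn from a single orbit pair, so it yields a constant $\delta_A$ depending on $(\tilde{\phi},\phi)$; Walther's estimate (and the role of hypothesis (i), which fixes the bound in terms of the attractor) gives a constant depending only on $\mu$, $f$ and that bound, although pair-dependence would happen to suffice for the paper's use in Lemma \ref{C0-TR}.
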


\begin{proof} By Lemma \ref{Le: dissipativity}, $F_t$ is always point dissipative with one of the two conditions: $\inf f>-\infty$ and $\sup f<+\infty$. Then, this lemma is implied by \cite[Proposition 6.1]{Wal-3} (See also \cite[Proposition 10.1]{Wal-2} and \cite[Proposition 7.1]{Wal-1}).
\end{proof}

\begin{lemma}\label{C0-TR} If $\tilde{\phi},\phi\in X\setminus \mathcal{D}_{ES}$ such that $\tilde{\phi}\neq \phi$ and $\omega(\tilde{\phi})=\omega(\phi)=\{0\}$, then $\tilde{\phi}\rightharpoondown \phi$.
\end{lemma}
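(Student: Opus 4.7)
The plan is to argue by contradiction: suppose $\tilde{\phi}\thicksim\phi$. Since $\tilde{\phi}\neq\phi$, one has $\tilde{\phi}-\phi\in\mathcal{S}$, and Lemma \ref{monotonicity}(ii) yields $w_t:=F_t(\tilde{\phi})-F_t(\phi)\in\text{Int}_{X^1}\mathcal{S}^1$ for every $t\geq 3$. Consequently the scalar function $z:=x^{\tilde{\phi}}-x^{\phi}$ is slowly oscillating on $[2,+\infty)$: the window $[t-1,t]$ carries at most one sign change of $z$ for each $t\geq 3$, so any two zeros of $z$ in $[2,+\infty)$ are separated by more than $1$. From this I can locate $t_0\geq 3$ with $w_{t_0}\in K^+\cup K^-$: either $z$ has only finitely many zeros, and every large $t_0$ works, or for any two consecutive zeros $\tau_n<\tau_{n+1}$ of $z$ any $t_0\in(\tau_n+1,\tau_{n+1})$ qualifies.

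Next I invoke Lemma \ref{Hans-Otto}. Because $\omega(\tilde{\phi})=\omega(\phi)=\{0\}$, the two orbits are eventually contained in arbitrarily small neighborhoods of $0\in\mathcal{A}$; after replacing $\tilde{\phi},\phi$ by $F_T(\tilde{\phi}),F_T(\phi)$ for $T$ sufficiently large, hypothesis (i) of Lemma \ref{Hans-Otto} is satisfied, and hypothesis (ii) is just the preceding observation. The lemma therefore produces $\delta_A>0$ such that
\begin{equation}\label{plan-ratio-lower}
\norm{\Pi_L w_t}\geq \frac{1}{1+\delta_A}\norm{\Pi_Q w_t}\qquad\text{for all sufficiently large }t.
\end{equation}

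On the other hand, I will use the local invariant manifold $\mathcal{W}_Q$ from Lemma \ref{Local-manifold-0}(iii). Since $\omega(\tilde{\phi})=\{0\}$, there exists $N_0$ such that $F_n(\tilde{\phi})\in B_\epsilon^{X^1}$ for all $n\geq N_0$; hence $\overline{F}_k(F_n(\tilde{\phi}))=F_{n+k}(\tilde{\phi})$ for all such $n$ and all $k\geq 0$. I claim that $F_n(\tilde{\phi})\in\mathcal{W}_Q$ for every $n\geq N_0$. Otherwise, the definition of $\mathcal{W}_Q$ produces some $k\geq 0$ with $F_{n+k}(\tilde{\phi})\in\text{Int}_{X^1}\mathcal{C}$. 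But $\mathcal{C}\setminus\{0\}\subset\text{Int}_{X^1}\mathcal{S}^1\subset\mathcal{S}$ by Lemma \ref{norm-cone-S}, and so $F_{n+k}(\tilde{\phi})\in\mathcal{S}$; by Lemma \ref{solu-orbit-1}(i) this forces $\tilde{\phi}\in\mathcal{D}_{ES}$, a contradiction. The same argument applies to $\phi$. Writing $\mathcal{W}_Q$ as the graph $\{\psi+h_Q(\psi):\psi\in Q^1\}$ with $h_Q(0)=0$ and $Dh_Q(0)=0$, I obtain $\Pi_L w_n=h_Q(\Pi_Q F_n(\tilde{\phi}))-h_Q(\Pi_Q F_n(\phi))$, and a mean-value estimate together with $\Pi_Q F_n(\tilde{\phi}),\,\Pi_Q F_n(\phi)\to 0$ and continuity of $Dh_Q$ at $0$ gives
\begin{equation}\label{plan-ratio-upper}
\norm{\Pi_L w_n}\leq \eps_n\norm{\Pi_Q w_n},\qquad \eps_n\to 0.
\end{equation}
Note that $\Pi_Q w_n\neq 0$ for all large $n$, for otherwise \eqref{plan-ratio-upper} would force $\Pi_L w_n=0$ too, hence $w_n=0$, contradicting $w_n\in\text{Int}_{X^1}\mathcal{S}^1$. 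Combining \eqref{plan-ratio-lower} and \eqref{plan-ratio-upper} yields $1/(1+\delta_A)\leq \eps_n\to 0$, the desired contradiction.

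The main obstacle I anticipate is the precise application of Lemma \ref{Hans-Otto}: one has to verify the boundedness hypothesis (i) (handled by a time shift into a neighborhood of $0\in\mathcal{A}$) and to transfer its conclusion back to $w_t$. Once that step is clean, the contradiction becomes a clean confrontation of two asymptotic regimes of $w_t$: the lemma forces the $L$-component of $w_t$ to dominate, while the stable manifold $\mathcal{W}_Q$, together with $Dh_Q(0)=0$, forces exactly the opposite.
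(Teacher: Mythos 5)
Your overall strategy coincides with the paper's: argue by contradiction, get the ratio bound of Lemma \ref{Hans-Otto}, show both trajectories eventually lie on $\mathcal{W}_Q$, and use the graph representation with $Dh_Q(0)=0$ to make the opposite ratio degenerate. The preparatory steps are fine and in places more careful than the paper (locating $t_0$ with $w_{t_0}\in K^+\cup K^-$, the time shift for hypothesis (i) of Lemma \ref{Hans-Otto}, the argument placing $F_n(\tilde{\phi}),F_n(\phi)$ on $\mathcal{W}_Q$). However, the final contradiction has a genuine gap: your two ratio estimates live in different norms and are not directly contradictory. Lemma \ref{Hans-Otto} controls the ratio in the $C^0$-norm $\norm{\cdot}$ of $X$, whereas $h_Q$ is a $C^1$ map on $Q^1\subset X^1$, so the mean-value estimate only yields $\norm{\Pi_L w_n}\leq \eps_n\norm{\Pi_Q w_n}_{X^1}$, with the $X^1$-norm (hence the derivative term $\norm{(\Pi_Q w_n)^{\prime}}$) on the right-hand side. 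On the finite-dimensional space $L$ the two norms are equivalent, but $Q^1$ is infinite dimensional, and there is no constant bounding $\norm{(\Pi_Q w_n)^{\prime}}$ by $\norm{\Pi_Q w_n}$. Consequently the $C^0$ lower bound $\norm{\Pi_L w_n}\geq (1+\delta_A)^{-1}\norm{\Pi_Q w_n}$ is perfectly compatible with your $X^1$ upper bound, and the inequality $1/(1+\delta_A)\leq\eps_n$ does not follow as written.

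Bridging this norm mismatch is precisely the second half of the paper's proof, and it needs an idea your proposal does not contain. From the delay equation one only gets $\norm{h_t^{\prime}}\leq\mu\norm{h_t}+\max_{|x|\leq\delta_0}|f^{\prime}(x)|\cdot\norm{h_{t-1}}$, with the delayed term $\norm{h_{t-1}}$; to convert this into $\norm{(\Pi_Q h_t)^{\prime}}\leq c\,\norm{\Pi_L h_t}$ as in (\ref{esti-t-t-prime-Q}), the paper first proves the one-step lower bound $\norm{\Pi_L h_t}\geq\frac{\delta_L}{2\delta_A}\norm{h_{t-1}}$ of (\ref{esti-t-t-1}), which rests on the spectral estimates on $L$ in (\ref{low-bound-d0f}) (a lower bound for $D_0F_1$ on $L$ and $\norm{\xi^{\prime}}\leq\delta_L^S\norm{\xi}$ for $\xi\in L$) together with the $C^1$-continuity of $D_{(\cdot)}F_1$ at $0$ in (\ref{pertu}) to absorb the nonlinear remainder. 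Only then does the $X^1$-ratio $\norm{\Pi_L h_t}_{X^1}/\norm{\Pi_Q h_t}_{X^1}$ acquire the positive lower bound $1/(1+c+\delta_A)$ that contradicts the graph estimate (\ref{h-projl-low-X1}). Without this (or an equivalent) argument, your concluding step fails.
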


\begin{proof} Clearly, $\tilde{\phi}-\phi\neq 0$. Prove by contrary. Suppose $\tilde{\phi}-\phi\in\mathcal{S}$. Let $x^{\tilde{\phi}}$ and $x^{\phi}$ be solutions of $(\ref{E:DDE-sys-mu})$ with initial pionts $\tilde{\phi}$ and $\phi$, respectively. Let $h_t=F_t(\tilde{\phi})-F_t(\phi)=x^{\tilde{\phi}}_t-x^{\phi}_t$ for any $t\in\mathbb{R}^+$. Clearly, $\lim\limits_{t\rightarrow +\infty}F_t(\phi)=\lim\limits_{t\rightarrow +\infty}F_t(\tilde{\phi})=\lim\limits_{t\rightarrow +\infty}h_t=0$ in $X$. Together with (\ref{E:DDE-sys-mu}), one has $\lim\limits_{t\rightarrow +\infty}F_t(\phi)=\lim\limits_{t\rightarrow +\infty}F_t(\tilde{\phi})=\lim\limits_{t\rightarrow +\infty}h_t=0$ in $X^1$ with $t\geq 1$. By Lemma \ref{monotonicity}(ii), ones have that $h_t\in\mathcal{S}$ for any $t\geq 0$ and $h_t\in \text{Int}_{X^1}\mathcal{S}^1$ for any $t\geq 3$. It then follows from Lemma \ref{Hans-Otto} that there are contants $\delta_A>0$ and $t_0>0$ such that for any $t\geq t_0$, one has 

\begin{equation}\label{h-projl-low-X}
\norm{h_{t}}\leq \delta_A\norm{\Pi_L h_{t}}\,\,\text{and}\,\,\frac{\norm{\Pi_Q h_{t}}}{\norm{\Pi_L h_{t}}}\leq1+\delta_A.
\end{equation} By Lemma \ref{Local-manifold-0}, there exist a $C^1$-smooth function $h_{Q}: Q^1 \mapsto L$ with $h_{Q}(0)=0,\,Dh_{Q}(0)=0$ and an invariant manifold $\mathcal{W}_Q$ such that 

$$\mathcal{W}_Q=\cap_{n\in\mathbb{N}} \overline{F}_{-n}((X^1\setminus\text{Int}_{X^1}\mathcal{C}))=\{\tilde{\psi}\in X^1:\,\,\tilde{\psi}=\psi+h_{Q}(\psi),\,\,\psi \in Q^1\}.$$ Recall that $\tilde{\phi},\,\phi\in  X\setminus \mathcal{D}_{ES}$. It then follows from Lemma \ref{Le: embeding} and \ref{solu-orbit-1}(i) that $F_t(\tilde{\phi}),\,F_t(\phi)\in  X^1\setminus \mathcal{S}^1$ for any sufficiently large $t\in\mathbb{R}^+$. Then, there is $t_1>t_0$ such that $F_t(\tilde{\phi}),\,F_t(\phi)\in B^{X^{1}}_{\epsilon}(0)\cap (X^1\setminus \mathcal{S}^1)$ for any $t>t_1$. By Lemma \ref{norm-cone-S}, $\mathcal{C}\setminus\{0\}\subset {\rm Int}_{X^1}\mathcal{S}^1$. Then, $F_{t}(\tilde{\phi}), F_{t}(\phi)\in \cap_{n\in\mathbb{N}} F_{-n}((X^1\setminus\mathcal{S}^1)\cap B^{X^{1}}_{\epsilon}(0)) \subset \cap_{n\in\mathbb{N}} F_{-n}((X^1\setminus\text{Int}_{X^1}\mathcal{C})\cap B^{X^{1}}_{\epsilon}(0))$ for any $t>t_1$. Note that $\overline{F}_1=F_1$ on $B^{X^1}_{\epsilon}(0)$. Hence, $F_t(\tilde{\phi}),\,F_t(\phi)\in\mathcal{W}_Q$ for any $t>t_1$. It then follows that for any $t>t_1$, 

$$ h_t=\Pi_Q h_t+h_Q(\Pi_Q\circ F_t(\tilde{\phi}))-h_Q(\Pi_Q\circ F_t(\phi))= \Pi_Q h_t+\int_0^1 Dh_Q(\Pi_Q\circ F_t(\phi)+s\Pi_Q h_t)ds\circ \Pi_Q h_t.$$ Hence, 

$$\Pi_L h_t=\int_0^1 D h_Q(\Pi_Q\circ F_t(\phi)+s\Pi_Q h_t)ds\circ \Pi_Q h_t $$ for any $t>t_1$. It then follows from $Dh_Q(0)=0$ that 

\begin{equation}\label{h-projl-low-X1}\lim\limits_{t\rightarrow \infty} \frac{\norm{\Pi_{L}h_t}_{X^1}}{\norm{\Pi_Qh_t}_{X^1}}\leq \lim\limits_{t\rightarrow \infty}\norm{\int_0^1 Dh_Q (\Pi_Q\circ F_t(\phi)+s\Pi_Q h_t)ds}_{L(X^1)}=0.
\end{equation} By Lemmas \ref{Spectrum-D_0F}-\ref{Sigma0}, there are constants $\delta_{L}^S>\delta_{L}>0$ such that for any $\xi\in L\setminus \{0\}$, 

\begin{equation}\label{low-bound-d0f}\begin{aligned} &\quad \norm{\xi^{\prime}}\leq \delta^S_L\norm{\xi} \,\,\text{and}\\
&\quad D_0F_1\xi\in L\setminus\{0\}\,\,\text{satisfying}\,\, \norm{D_0F_1\xi}\geq \delta_{L}\norm{\xi}.\end{aligned}\end{equation} By Lemma \ref{Le: smoothness}, there is a $\delta_0>0$ such that for any $\overline{\phi}\in B^{X}_{\delta_0}(0)$, 

\begin{equation}\label{pertu}
\norm{D_{\overline{\phi}}F_1-D_0F_1}_{L(X)}\leq \frac{\delta_L}{2\delta_A\norm{\Pi_L}_{L(X)}},
\end{equation} where $B^{X}_{\delta_0}(0)=\{\overline{\phi}\in X: \norm{\overline{\phi}}\leq\delta_0 \}$. By utilizing $\lim\limits_{t\rightarrow \infty} F_t(\phi)=\lim\limits_{t\rightarrow \infty} h_t=0$ in $X$ again, there is a $t_2>t_1$ such that $F_t(\phi),\,h_t\in\mathcal{B}^{X}_{\frac{\delta_0}{2}}(0)$ for any $t>t_2$. Note that for any $t\geq 1$,

$$\begin{aligned} &h_t=F_1(x^{\tilde{\phi}}_{t-1})-F_1(x^{\phi}_{t-1})=\int_0^1 D_{x^{\phi}_{t-1}+sh_{t-1}}F_1 ds h_{t-1}\\
&=D_0F_1 h_{t-1}+\int_0^1 \big( D_{x^{\phi}_{t-1}+sh_{t-1}}F_1-D_0F_1\big) ds h_{t-1}.\end{aligned}$$ Then, for any $t\geq 1$,

$$\Pi_L h_t=D_0F_1\circ \Pi_L h_{t-1}+\Pi_L \circ \int_0^1 \big( D_{x^{\phi}_{t-1}+sh_{t-1}}F_1-D_0F_1\big) ds h_{t-1}.$$ It then follows that for any $t>t_2+1$,

\begin{equation}\label{esti-t-t-1}\begin{aligned}&\norm{\Pi_L h_{t}}\overset{(\ref{h-projl-low-X})+(\ref{low-bound-d0f})+(\ref{pertu})}{\geq}\delta_L \norm{\Pi_L h_{t-1}}-\norm{\Pi_L}_{L(X)}\cdot\frac{\delta_L}{2\delta_A\norm{\Pi_L}_{L(X)}}\cdot \delta_A \norm{\Pi_L h_{t-1}} \\
&\geq \frac{\delta_L}{2}\norm{\Pi_L h_{t-1}}\overset{(\ref{h-projl-low-X})}{\geq} \frac{\delta_L}{2\delta_A}\norm{h_{t-1}}.
\end{aligned}
\end{equation} By (\ref{E:DDE-sys-mu}), one has for any $t\in\mathbb{R}^+$, 

$$h^{\prime}(t)=-\mu h(t)+\int_0^1 f^{\prime}(x^{\phi}(t-1)+sh(t-1))ds\cdot h(t-1).$$ It then follows that for any $t> t_2+1$,

\begin{equation}\label{esti-t-t-prime} \norm{h^{\prime}_t}\leq \mu \norm{h_t}+\max\limits_{\mid x\mid\leq \delta_0}\{\mid f^{\prime}(x)\mid\}\norm{h_{t-1}}\overset{(\ref{h-projl-low-X})+(\ref{esti-t-t-1})}{\leq} \delta_A\cdot \frac{\mu\delta_L+2\max\limits_{\mid x\mid\leq \delta_0}\{\mid f^{\prime}(x)\mid\}}{\delta_L}\cdot \norm{\Pi_L h_t}.
\end{equation} Note that $h_t^{\prime}=(\Pi_L h_t)^{\prime}+(\Pi_Q h_t)^{\prime}$ for any $t> t_2+1$. By (\ref{low-bound-d0f}), one has that $\norm{(\Pi_L h_t)^{\prime}}\leq \delta^S_L\norm{\Pi_L h_t}$ for any $t> t_2+1$. So, one has that for any $t>t_2+1$,

\begin{equation}\label{esti-t-t-prime-Q} 
\begin{aligned}&\norm{(\Pi_Q h_t)^{\prime}}\leq\norm{h^{\prime}_t}+\norm{(\Pi_L h_t)^{\prime}} \leq \norm{h^{\prime}_t}+\delta^S_L\norm{\Pi_L h_t}\\
&\overset{(\ref{esti-t-t-prime})}{\leq} (\delta^S_L+\delta_A\cdot \frac{\mu\delta_L+2\max\limits_{\mid x\mid\leq \delta_0}\{\mid f^{\prime}(x)\mid\}}{\delta_L})\cdot \norm{\Pi_L h_t}.
 \end{aligned}
\end{equation} For simplicity, let $c=\delta_L^S+\delta_A\cdot \frac{\mu\delta_L+2\max\limits_{\mid x\mid\leq \delta_0}\{\mid f^{\prime}(x)\mid\}}{\delta_L}$. It then follows that for any $t>t_2+1$,

$$\begin{aligned}&\frac{\norm{\Pi_{L}h_t}_{X^1}}{\norm{\Pi_Qh_t}_{X^1}}\geq \frac{\norm{\Pi_{L}h_t}}{\norm{\Pi_Q h_t}+\norm{(\Pi_Q h_t)^{\prime}}}\overset{(\ref{h-projl-low-X})+(\ref{esti-t-t-prime-Q})}{\geq} \frac{\norm{\Pi_{L}h_t}}{(1+\delta_A)\norm{\Pi_L h_t}+c\norm{\Pi_L h_t}}\\
&\quad\quad\quad\quad\quad \geq\frac{1}{1+c+\delta_A}.\end{aligned}$$ It is a contradiction to (\ref{h-projl-low-X1}). 
 
Therefore, we have completed the proof.
\end{proof}

\begin{lemma}\label{PseO-to-SO} If $O^+(\phi)$ is a pseudo-ordered semiorbit, then there is a $t_0>0$ such that $F_{t_0}(\phi)\in\mathcal{S}$, i.e., $\phi\in\mathcal{D}_{ES}$.
\end{lemma}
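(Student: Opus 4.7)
I argue by contradiction: assume $O^+(\phi)$ is pseudo-ordered yet $\phi\notin\mathcal{D}_{ES}$. By Lemma \ref{solu-orbit-1}(i), $F_t(\phi)\notin\mathcal{S}$ for every $t\ge 0$, and the contrapositive of Lemma \ref{OS-D} yields $\omega(\phi)\cap\mathcal{S}=\emptyset$. The pseudo-ordered hypothesis supplies $0\le t_1<t_2$ with $F_{t_1}(\phi)\ne F_{t_2}(\phi)$ and $F_{t_1}(\phi)-F_{t_2}(\phi)\in{\rm Cl}_X\mathcal{S}\setminus\{0\}=\mathcal{S}$. Since $F_t$ is SOP with respect to the solid $2$-cone ${\rm Cl}_X\mathcal{S}$ (Lemma \ref{monotonicity}(iii)), admits a flow extension on $\omega(\phi)$ (Lemma \ref{flow-extension}(ii)), and $O^+(\phi)$ is precompact, I invoke Lemma \ref{17-B} and rule out each of its three alternatives applied to $\omega(\phi)$.

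Alternative (ii) is immediate because $E=\{0\}$ has only one point and hence cannot be an unordered set. For alternative (iii), the subset $\tilde B$ must contain $0$ (since $\alpha(p)\subset E=\{0\}$ for $p\in\omega(\phi)\setminus\tilde B$); orderedness combined with $0\in\tilde B$ forces $\tilde B\subset\mathcal{S}\cup\{0\}$, and since $\tilde B\subset\omega(\phi)$ is disjoint from $\mathcal{S}$ we get $\tilde B=\{0\}$. Then $\tilde B\thicksim\omega(\phi)$ forces $\omega(\phi)\subset\mathcal{S}\cup\{0\}$, giving $\omega(\phi)=\{0\}=\tilde B$, which contradicts the requirement of (iii) that $\omega(\phi)\setminus\tilde B\ne\emptyset$ (implicit in the existence of $\tilde p$). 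In alternative (i) with $0\in\omega(\phi)$, the same "ordered $\ni 0$" argument gives $\omega(\phi)=\{0\}$. If $\omega(\phi)=\{0\}$, I apply Lemma \ref{C0-TR} to the distinct points $F_{t_1}(\phi),F_{t_2}(\phi)\in X\setminus\mathcal{D}_{ES}$ (forward invariance of the complement) with common $\omega$-limit $\{0\}$; the conclusion $F_{t_1}(\phi)\rightharpoondown F_{t_2}(\phi)$ directly contradicts $F_{t_1}(\phi)-F_{t_2}(\phi)\in\mathcal{S}$.

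The remaining and most delicate case is alternative (i) with $0\notin\omega(\phi)$: Lemma \ref{17-C} then makes $\omega(\phi)$ an ordered nontrivial periodic orbit. Fix $q\in\omega(\phi)$, let $p(s):=F_s(q)$ have minimal period $T>0$, and set $x:=x^q$. Orderedness gives $p(s+\tau)-p(s)\in\mathcal{S}$ for every $s\in\mathbb{R}$ and $\tau\in(0,T)$. Since $p$ is $C^1$ into $X$ (recall $q\in\mathcal{A}\subset X^1$) with $p'(s)(\theta)=x'(s+\theta)$, dividing by $\tau$ and letting $\tau\to 0^+$ (the cone ${\rm Cl}_X\mathcal{S}$ being closed under positive scaling and limits in $X$) yields $p'(s)\in{\rm Cl}_X\mathcal{S}$. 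The possibility $p'(s)=0$ would force $x'\equiv 0$ on $[s-1,s]$, hence $x$ constant there; iterating through the DDE (\ref{E:DDE-sys-mu}) and using uniqueness of the equilibrium $0$ together with periodicity then forces $x\equiv 0$, contradicting nontriviality. Therefore $p'(s)\in\mathcal{S}$ for every $s$, i.e., $x'$ has at most one sign change on every $[s-1,s]$, so consecutive sign-change zeros of $x'$ are strictly more than $1$ apart.

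A zero-counting contradiction then closes the argument. Since $x$ is nontrivial and periodic, $x'$ must change sign (otherwise $x$ is monotonic and periodic, hence constant, hence zero); by periodicity one can pick consecutive sign-change zeros $\sigma<\sigma'$ of $x'$ with $\sigma'-\sigma>1$. On $(\sigma,\sigma')$ the sign of $x'$ is constant, so $x$ is monotonic on $[\sigma,\sigma']$ and hence has at most one sign change on the subinterval $[\sigma,\sigma+1]\subset[\sigma,\sigma']$. This forces $x_{\sigma+1}\in\mathcal{S}$ (the alternative $x\equiv 0$ on $[\sigma,\sigma+1]$ again propagates via the DDE to $x\equiv 0$). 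But $x_{\sigma+1}=p(\sigma+1)\in\omega(\phi)$, contradicting $\omega(\phi)\cap\mathcal{S}=\emptyset$. The main obstacle is precisely this periodic-orbit case: one must transfer the order relation on shifts of the orbit to an infinitesimal order statement on the tangent $x'$, and then exploit that a rapidly oscillating $x$ cannot coexist with a slowly oscillating $x'$.
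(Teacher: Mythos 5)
Your proof is correct and follows essentially the same route as the paper: the trichotomy of Lemma \ref{17-B} combined with the uniqueness of the equilibrium, Lemma \ref{C0-TR} for the case $\omega(\phi)=\{0\}$, and, in the remaining case, the key observation that orderedness of the nontrivial periodic orbit forces the derivative segments of the periodic solution into the cone, which is then incompatible with its segments avoiding $\mathcal{S}$. The only deviations are minor: you obtain $x'_t\in{\rm Cl}_{X}\mathcal{S}$ by a difference-quotient limit rather than via the variational equation and Lemma \ref{monotonicity}(ii),(iv) as the paper does, and your claim that consecutive sign-change zeros of $x'$ are spaced strictly more than $1$ apart should be weakened to at least $1$, which still suffices for your concluding monotonicity argument.
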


\begin{proof} By Lemma \ref{monotonicity}(iii), $F_t$ is SOP w.r.t. $\text{Cl}_{X}\mathcal{S}$. By Lemma \ref{flow-extension}(ii), for any $\tilde{\phi}\in X$, $O^+(\tilde{\phi})$ is precompact and $F_t$ admits a flow extension on $\omega(\tilde{\phi})$. It then follows from Lemma \ref{17-B} and $0$ being the unique equilibrium that one of the following holds:\\ 
(a) $\omega(\phi)$ is ordered; \\
(b) $\omega(\phi)$ is unordered and consists of equilibria;\\
(c) $\omega(\phi)$ possessing an ordered homoclinic property, that is,  there is an ordered and invariant subset $\tilde{B}\subset \omega(\phi)$ such that $\tilde{B}\thicksim \omega(\phi)$ and for any $\psi\in\omega(\phi)\setminus \tilde{B}$, it holds that $\alpha(\psi)\cup \omega(\psi)\subset \tilde{B}$ and $\alpha(\psi)=\{0\}$. 

Clearly, the case (b) will not occur. 
\vskip 3mm
{\it Consider that case (c) occurs}: One has that $0\in\tilde{B}\subset \omega(\phi)$ and there is a $\tilde{\phi}\in \omega(\phi)\setminus \{0\}$ such that $\tilde{\phi}\in \mathcal{S}$. It then follows from Lemma \ref{OS-D} that $\phi\in\mathcal{D}_{ES}$.

\vskip 3mm
{\it Consider that case (a) occurs}:  $\omega(\phi)$ either consists of an equilibrium (i.e., $\omega(\phi)=\{0\}$) or is a nontrivial ordered set. 

If $\omega(\phi)=\{0\}$, we claim that $\phi\in\mathcal{D}_{ES}$. Clearly, $\phi\neq 0$ and there are two distinct points $F_{s_1}(\phi)$ and $F_{s_2}(\phi)$ with $s_1,s_2\in \mathbb{R}^+$ such that $F_{s_1}(\phi)\thicksim F_{s_2}(\phi)$. It then follows from $\omega(\phi)=\{0\}$ and Lemma \ref{C0-TR} that $\phi\in\mathcal{D}_{ES}$.

If $\omega(\phi) $ is a nontrivial ordered set, we claim that $\omega(\phi)\cap\mathcal{S}\neq \emptyset$. By Lemma \ref{OS-D} again, $\phi\in\mathcal{D}_{ES}$.

Finally, we prove the claim above. Suppose that $\omega(\phi) $ is a nontrivial ordered set such that $\omega(\phi)\cap\mathcal{S}=\emptyset$. Then, $0\notin \omega(\phi)$. By Lemma \ref{S-2-cone}, $\text{Cl}_{X}\mathcal{S}$ is a complemented solid 2-cone in $X$. Note that $0$ is the unique equilibrium. It then follows from Lemma \ref{17-C}, \ref{monotonicity}(ii)-(iii) and \ref{solu-orbit-1}(i) that $\omega(\phi)\subset X\setminus \text{Cl}_{X}\mathcal{S}$ is a nontrivial periodic orbit, which is an ordered set, and its solution, denoted by $x^{\tilde{\psi}}$, is rapidly oscillating. Suppose that the mininal period of $x^{\tilde{\psi}}$ is $T_0$. By (\ref{E:DDE-sys-mu}), $x^{\tilde{\psi}}$ is $C^2$-smooth. It furthermore from the uniqueness of solutions originating from its initial point of Equation (\ref{E:LVE}) and Lemma \ref{monotonicity}(ii), (iv) that for any $t\in\mathbb{R}^+$ and $\epsilon\in (0,T_0)$, $x^{\tilde{\psi}}_{t+\epsilon}-x^{\tilde{\psi}}_{t}\in \text{Int}_{X^1}\mathcal{S}^1$ and $(x^{\tilde{\psi}})^{\prime}_{t}\in \text{Int}_{X^1}\mathcal{S}^1$. Note that for any two zeros $s,\tau$ with $s<\tau$ of $x^{\tilde{\psi}}$, there must be a zero of $(x^{\tilde{\psi}})^{\prime}_{t}$ in $[s,\tau]$. Then, for any zero $s$ of $x^{\tilde{\psi}}$, there are zeros $s_1, s_2$ of $x^{\tilde{\psi}}$ with $s<s_1<s_2$ such that $x^{\tilde{\psi}}(t)\neq 0$ on $(s,s_1)\cup(s_1,s_2)$ and $s_2-s>1$. It then follows that $x^{\tilde{\psi}}_{s_2}\in \mathcal{S}$. By virtue of Lemma \ref{solu-orbit-1}(i), $x^{\tilde{\psi}}$ is eventually slowly oscillating, a contradiction to $x^{\tilde{\psi}}$ being rapidly oscilatting. Thus, the claim has been proved.
 
\vskip 3mm
Therefore, we have completed the proof.
\end{proof}

\subsection{Proof of the main theorem}

{\bf Main Theorem:}  $\mathcal{D}_{ES}$ is open and dense in $X$.
\vskip 3mm
By the main theorem, we immediately have the following corollary, which gives an affirmative answer to the open dense conjecture on eventually slow oscillations posed by Kaplan and York.
\begin{cor} The set of initial points of an eventually slowly oscillating solution of Equation {\rm(\ref{E:DDE-sys-mu})} for $\mu=0$, is open and dense in $X$.
\end{cor}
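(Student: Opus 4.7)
The plan is to establish openness and density of $\mathcal{D}_{ES}$ separately.

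\textbf{Openness.} Starting from $\phi\in\mathcal{D}_{ES}$, Lemma~\ref{solu-orbit-1}(i) supplies $t_0>0$ with $F_{t_0}(\phi)\in\mathcal{S}\subset\text{Cl}_X\mathcal{S}\setminus\{0\}$, and then Lemma~\ref{monotonicity}(i) upgrades this to $F_{t_0+3}(\phi)\in\text{Int}_{X^1}\mathcal{S}^1$. The map $\tilde F_{t_0+3}=\tilde F_3\circ F_{t_0}:X\to X^1$ is continuous (the continuity of $\tilde F_3:X\to X^1$ is established in the proof of Lemma~\ref{monotonicity}(iii), and $F_{t_0}$ is continuous from $X$ to $X$), and $\text{Int}_{X^1}\mathcal{S}^1$ is open in $X^1$; hence $\tilde F_{t_0+3}^{-1}(\text{Int}_{X^1}\mathcal{S}^1)$ is an open $X$-neighborhood of $\phi$ on which $F_{t_0+3}$ lands in $\text{Int}_{X^1}\mathcal{S}^1\subset\mathcal{S}$, and Lemma~\ref{solu-orbit-1}(i) returns every point of it to $\mathcal{D}_{ES}$.

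\textbf{Density, setup.} Fix $\phi\in X$ and $\delta>0$; I will produce a point of $\mathcal{D}_{ES}$ within distance $\delta$ of $\phi$. Pick $v\in K^+$ with $\|v\|=1$ and set $\phi_\lambda=\phi+\lambda v$, $\lambda\in(0,\delta)$; the family is pairwise strongly ordered since $\phi_{\lambda_1}-\phi_{\lambda_2}=(\lambda_1-\lambda_2)v\in K^+\subset\text{Int}_X\mathcal{S}$ for $\lambda_1>\lambda_2$. Arguing by contradiction, assume $\phi_\lambda\in X\setminus\mathcal{D}_{ES}$ for every $\lambda\in(0,\delta)$. Lemma~\ref{PseO-to-SO} then forces each $O^+(\phi_\lambda)$ to be unordered, Lemma~\ref{OS-D} gives $\omega(\phi_\lambda)\cap\mathcal{S}=\emptyset$, and Lemma~\ref{17-D} says $\omega(\phi_\lambda)$ is either $\{0\}$ or nontrivial unordered. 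Because the family is pairwise ordered, Lemma~\ref{C0-TR} forbids two distinct indices with $\omega=\{0\}$, so after discarding at most one $\lambda$ all the $\omega(\phi_\lambda)$ are nontrivial and unordered.

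\textbf{Density, structural analysis.} For any such pair $\lambda_1\neq\lambda_2$ I would invoke Lemma~\ref{order-rela-betw-ome}: equality $\omega(\phi_{\lambda_1})=\omega(\phi_{\lambda_2})$ is ruled out because part~(iii) would yield a nontrivial ordered full-orbit $O(z)$ inside, whence $z\in\mathcal{D}_{ES}$ by Lemma~\ref{PseO-to-SO} and $F_t(z)\in\mathcal{S}$ eventually, contradicting $\omega(\phi_{\lambda_1})\cap\mathcal{S}=\emptyset$; nontrivial but unequal intersection is ruled out because part~(ii) would make $O^+(\phi_{\lambda_1})$ pseudo-ordered. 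Hence the $\omega(\phi_\lambda)$ are pairwise disjoint and, by part~(i), pairwise cone-ordered. Next, applying Lemma~\ref{Hans-Otto} to each pair (with $t_0=0$ permitted by $\phi_{\lambda_1}-\phi_{\lambda_2}\in K^+$) and passing to the limit along a common sequence $t_n\to\infty$ with $F_{t_n}(\phi_{\lambda_i})\to y_i\in\omega(\phi_{\lambda_i})$ yields
\[
\|y_1-y_2\|\leq\delta_A\,\|\Pi_L(y_1-y_2)\|,
\]
so $\Pi_L y_1\neq\Pi_L y_2$ and $\{\Pi_L\omega(\phi_\lambda)\}_\lambda$ is an uncountable family of pairwise disjoint compacts inside the bounded region $\Pi_L\mathcal{A}$ of the $2$-dimensional space $L$.

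\textbf{Main obstacle.} The decisive and hardest step is to convert this $2$-dimensional reduction into the final contradiction. My intended route is to combine the local $L/Q^1$ splitting and the invariant manifolds $\mathcal{W}_L,\mathcal{W}_Q$ near $0$ from Lemma~\ref{Local-manifold-0} with a linearised variant of Lemma~\ref{Hans-Otto} applied to the directional derivative $D_{\phi_\lambda}F_t\,v\in\text{Cl}_X\mathcal{S}$ (available through Lemmas~\ref{Le: smoothness} and~\ref{monotonicity}(iv)); this should force $\lambda\mapsto\omega(\phi_\lambda)$ to vary ``continuously'' inside $L$, and the uncountable, pairwise cone-ordered disjoint family in $L$ must then sweep through a parameter $\lambda^*$ at which $\omega(\phi_{\lambda^*})\cap\mathcal{S}\neq\emptyset$, contradicting the standing assumption and completing the proof of density.
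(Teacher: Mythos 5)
There is a genuine gap in the density half. Your openness argument is fine and coincides with the paper's. For density, however, everything after the structural analysis is only a declared intention, not a proof: you give no argument that $\lambda\mapsto\omega(\phi_\lambda)$ varies continuously in any sense, no precise statement or proof of the ``linearised variant of Lemma~\ref{Hans-Otto}'' you invoke, and no reason why an uncountable, pairwise disjoint, pairwise cone-ordered family of compacta inside a bounded region of the $2$-dimensional space $L$ should be impossible or should force some $\omega(\phi_{\lambda^*})$ to meet $\mathcal{S}$ (such families of disjoint compacta certainly exist in the plane, so cardinality alone yields no contradiction). Moreover, even the step you do assert is not justified: the limit of the Hans-Otto estimate along a common time sequence only compares the two particular limit points $y_1,y_2$ obtained simultaneously; it does not show $\Pi_L\omega(\phi_{\lambda_1})\cap\Pi_L\omega(\phi_{\lambda_2})=\emptyset$, since an arbitrary pair of points of the two $\omega$-limit sets need not arise as simultaneous limits. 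So the ``decisive and hardest step'' you flag is exactly where the proof is missing, and the route you sketch is not the one that works.

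The paper closes this gap quite differently, with no parametrized family and no continuity-in-$\lambda$ or dimension-counting argument in $L$. For a non-eventually-slowly-oscillating $\phi\neq 0$ it shows directly that \emph{every} point of $\mathcal{M}_\phi$ (in particular $\phi+\epsilon v$, $v\in K^+$, for all small $\epsilon$) lies in $\mathcal{D}_{ES}$: Lemma~\ref{17-D} splits into $\omega(\phi)=\{0\}$ or $\omega(\phi)$ unordered; the first case is handled by Lemma~\ref{order-rela-betw-ome} together with Lemma~\ref{C0-TR}, and the second by Lemma~\ref{order-rela-betw-ome}(i) (giving $\omega(\tilde\phi)\thicksim\omega(\phi)$) combined with the key oscillation Lemma~\ref{order-bet-rap}, which says that two non-eventually-slowly-oscillating points whose difference lies in $\mathcal{S}$ must eventually produce an unordered pair $F_{t_1}(\tilde\phi)\rightharpoondown F_{t_2}(\phi)$; this contradicts the order relation between the two $\omega$-limit sets and forces $\tilde\phi\in\mathcal{D}_{ES}$. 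Your proposal never uses Lemma~\ref{order-bet-rap}, and without it (or a genuine substitute for your sketched ``sweeping'' argument) the density claim — and hence the corollary for $\mu=0$, which in the paper is an immediate specialization of the Main Theorem — is not established.
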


\begin{rem}In \cite{K-Y}, Kaplan and Yorke denote 

$ C_*=\{\phi\in X: \phi\,{\rm has\,at\, most\, one\, zero;\,and\,if}\,\tau\in(-1,0)\,{\rm is\,a\,zero\,of}\,\phi,\,{\rm then}\,\phi\,{\rm must \,change\,sign\,there.}\}$. They say a solution $x$ of {\rm(\ref{E:DDE-sys-mu})} with $\mu=0$ is {\it slowly oscillating on a interval $[t_0,+\infty)$ contained in its domain} if $x_t\in C_{*}$ for any $t\geq t_0+1$. Kaplan and Yorke conjecture that the set 

$$C_{**}=\{\phi\in X: x^{\phi}_t\in C_* \,{\rm for\,all\,sufficiently\,large\,}t.\}$$ is open and dense in $X$.

Clearly, ${\rm Int}_{X^1}\mathcal{S}^1\subset C_*\subset \mathcal{S}$. It then follows from Lemma \ref{monotonicity}(i) that $\mathcal{D}_{ES}$ equals to $C_{**}$. Therefore, the conjecture of Kaplan and Yorke is for the special case $\mu=0$ of {\rm(\ref{E:DDE-sys-mu})}.
\end{rem}

Before proving the main theorem, we give a technical lemma, which is inspired by \cite[Proposition 4.3]{M-PWalther94}. 

\begin{lemma}\label{order-bet-rap} If $\tilde{\phi},\,\phi\in X\setminus \mathcal{D}_{ES}$ satisfying $\tilde{\phi}-\phi\in\mathcal{S}$, then there are $t_1,t_2>0$ such that $F_{t_1}(\tilde{\phi})\rightharpoondown F_{t_2}(\phi)$, i.e., $F_{t_1}(\tilde{\phi})-F_{t_2}(\phi)\in X\setminus {\rm Cl}_{X}\mathcal{S}$.
\end{lemma}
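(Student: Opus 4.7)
The plan is to argue by contradiction, supposing that $F_{t_1}(\tilde{\phi}) - F_{t_2}(\phi) \in \mathrm{Cl}_X \mathcal{S}$ for all $t_1, t_2 > 0$, and ultimately to exploit the rapid oscillation of both solutions. As a first reduction, since $\mathrm{Cl}_X \mathcal{S}$ is closed and the positive semiorbits $O^+(\tilde{\phi}), O^+(\phi)$ are precompact by Lemma \ref{flow-extension}(ii), I would pass $t_1, t_2 \to \infty$ along suitable subsequences to obtain $\psi_1 - \psi_2 \in \mathrm{Cl}_X \mathcal{S}$ for every $\psi_1 \in \omega(\tilde{\phi})$ and $\psi_2 \in \omega(\phi)$; that is, $\omega(\tilde{\phi}) \thicksim \omega(\phi)$ element-wise.

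The next task is to classify the two limit sets and show that they are disjoint and nontrivial. Since $\tilde{\phi}, \phi \notin \mathcal{D}_{ES}$, Lemma \ref{PseO-to-SO} forces both $O^+(\tilde{\phi})$ and $O^+(\phi)$ to be unordered, hence by Lemma \ref{17-D} each $\omega$-limit is either $\{0\}$ or unordered, and by Lemma \ref{OS-D} each is disjoint from $\mathcal{S}$. The case $\omega(\tilde{\phi}) = \omega(\phi) = \{0\}$ is ruled out by Lemma \ref{C0-TR}, which would give $\tilde{\phi} \rightharpoondown \phi$, contradicting the hypothesis; and if exactly one of the two limit sets equals $\{0\}$, any nonzero element of the other lies in $\mathrm{Cl}_X \mathcal{S} \cap (X \setminus \mathcal{S}) = \{0\}$, again impossible. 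Lemma \ref{order-rela-betw-ome}(ii) combined with the no-pseudo-ordered constraint via Lemma \ref{PseO-to-SO} excludes $\omega(\tilde{\phi}) \cap \omega(\phi) \neq \emptyset$ with either of $\omega(\tilde{\phi}) \setminus \omega(\phi)$ or $\omega(\phi) \setminus \omega(\tilde{\phi})$ nonempty; and Lemma \ref{order-rela-betw-ome}(iii) precludes the possibility $\omega(\tilde{\phi}) = \omega(\phi)$, since this would yield a nontrivial ordered full-orbit inside an unordered set. Therefore $\omega(\tilde{\phi}) \cap \omega(\phi) = \emptyset$ and both limit sets are nontrivial and unordered.

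Choosing nonzero $\psi_1 \in \omega(\tilde{\phi}), \psi_2 \in \omega(\phi)$, the flow extension from Lemma \ref{flow-extension}(i) provides full orbits $\tilde{\Phi}_t(\psi_1), \tilde{\Phi}_s(\psi_2)$ for every $t, s \in \mathbb{R}$ that remain nonzero (otherwise $\psi_i = 0$), lie entirely in $X \setminus \mathrm{Cl}_X \mathcal{S}$ so that each $x^{\psi_i}$ has at least two sign changes on every closed interval of length $1$, and satisfy $\tilde{\Phi}_t(\psi_1) - \tilde{\Phi}_s(\psi_2) \in \mathrm{Cl}_X \mathcal{S} \setminus \{0\}$ for all $t, s \in \mathbb{R}$. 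The main obstacle is to derive a contradiction from this configuration. Following the spirit of \cite[Proposition 4.3]{M-PWalther94}, I would exhibit shifts $t, s$ for which $\theta \mapsto x^{\psi_1}(t + \theta) - x^{\psi_2}(s + \theta)$ has at least two sign changes on $[-1, 0]$: fixing $t$ at a zero of $x^{\psi_1}$ and varying $s$ through the rapidly oscillating zero pattern of $x^{\psi_2}$, together with the projection $\Pi_L$ onto the $2$-dimensional slow eigenspace $L$ from Lemma \ref{Spectrum-D_0F} and the identity $Q \cap \mathrm{Cl}_X \mathcal{S} = \{0\}$ from Lemma \ref{S-2-cone}, the problem reduces to a planar configuration in $L$ in which the disjoint compact projections $\Pi_L \omega(\tilde{\phi})$ and $\Pi_L \omega(\phi)$ clash with the winding structure forced by the rapid oscillation. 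Executing this alignment rigorously is the delicate part, because both orbits must oscillate faster than the lag $1$ while the cross-difference stays simultaneously constrained to at most one sign change across every pair of shifts.
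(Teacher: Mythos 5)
Your preliminary reductions are mostly sound: negating the conclusion does give $F_{t_1}(\tilde{\phi})\thicksim F_{t_2}(\phi)$ for all $t_1,t_2>0$, passing to limits gives the element-wise relation $\omega(\tilde{\phi})\thicksim\omega(\phi)$, and your case analysis (via Lemmas \ref{PseO-to-SO}, \ref{17-D}, \ref{OS-D}, \ref{C0-TR} and \ref{order-rela-betw-ome}) correctly rules out the equilibrium and overlapping cases, leaving two disjoint, nontrivial, unordered, invariant limit sets avoiding $\mathcal{S}$ that are element-wise ordered. But this is where the actual content of the lemma begins, and your proposal stops there: the claimed contradiction between the ``winding structure forced by the rapid oscillation'' and the planar projections $\Pi_L\omega(\tilde{\phi})$, $\Pi_L\omega(\phi)$ is only gestured at, and you yourself flag that ``executing this alignment rigorously is the delicate part.'' No mechanism is given for why the element-wise order relation is incompatible with both solutions having at least two sign changes per unit interval; note in particular that an unordered set can well have an injective projection to $L$ (compare Lemma \ref{17-B}), so nothing obviously ``clashes'' at the level of $\Pi_L$ alone. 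Moreover, the reduction to $\omega$-limit sets does not actually simplify the core difficulty: what remains to be shown is exactly the statement of the lemma applied to two eternally rapidly oscillating orbits, so the reduction postpones rather than removes the main step.

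For comparison, the paper's proof never passes to $\omega$-limit sets. It works directly with $x^{\tilde{\phi}}$ and $x^{\phi}$: using Lemma \ref{solu-orbit-1}(i) it selects consecutive zeros $\tau_1<\tau_2$ of $x^{\phi}$ and $\tilde{\tau}_1<\tilde{\tau}_2$ of $x^{\tilde{\phi}}$ with gaps less than $1$, time-shifts $x^{\tilde{\phi}}$ by $\eta=\tilde{\tau}_2-\tau_2$ so that the two solutions have a common zero of the difference, and then uses Lemma \ref{monotonicity}(ii) together with Lemma \ref{Int-bounary-S1} to conclude that the shifted difference segments lie in ${\rm Int}_{X^1}\mathcal{S}^1$, hence the difference function has only simple zeros and is itself slowly oscillating. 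A careful chase of the next zeros ($\tau_3,\tau_4,\tilde{\tau}_3,\tilde{\tau}_4$), split according to the sign of $d^{\prime}(\tau_2)$ and involving a second shifted difference $h$, produces sign contradictions in both cases. This zero-counting argument, adapted from Mallet-Paret and Walther, is precisely the missing ingredient in your proposal; without it (or a rigorous substitute for your winding argument), the proof is incomplete.
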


\begin{proof} By Lemma \ref{monotonicity}(ii) and $\text{Cl}_{X}\mathcal{S}\setminus\{0\}=\mathcal{S}$, one has $x^{\tilde{\phi}}_t-x^{\phi}_t\in \mathcal{S}$ for any $t\in\mathbb{R}^+$. Then, $x^{\tilde{\phi}}_t, x^{\phi}_t\neq 0$ for any $t\in\mathbb{R}^+$. \big(Otherwise, either $x^{\tilde{\phi}}_t=x^{\phi}_t=0$, a contradiction to $x^{\tilde{\phi}}_t-x^{\phi}_t\in\mathcal{S}$; or one of $x^{\tilde{\phi}}_t,\, x^{\phi}_t\in \mathcal{S}$, then by Lemma \ref{solu-orbit-1}, one of $\tilde{\phi},\,\phi$ is in $\mathcal{D}_{ES}$, a contradiction to $\tilde{\phi},\,\phi\in X\setminus \mathcal{D}_{ES}$.\big) By Lemma \ref{Le: injectivity and compactness}(i), $F_t$ is injective for any $t\in\mathbb{R}^+$. Together with Lemma \ref{PseO-to-SO}, one has that $F_{t_1}(\tilde{\phi})\neq F_{t_2}(\phi)$ for any $t_1,t_2\in\mathbb{R}^+$.

Now, we prove by contrary.  Under the assumptions of this lemma, suppose that $F_{t_1}(\tilde{\phi})\thicksim F_{t_2}(\phi)$ and $F_{t_1}(\tilde{\phi})\neq F_{t_2}(\phi)$ (i.e., $F_{t_1}(\tilde{\phi})-F_{t_2}(\phi)\in \mathcal{S}$) for any two distinct times $t_1, t_2\in \mathbb{R}^+$. Recall that $\phi\in X\setminus \mathcal{D}_{ES}$, $x^{\phi}_t\neq 0$ and $F_t$ is injective for any $t\in\mathbb{R}^+$. By utilizing Lemma \ref{solu-orbit-1}(i), one can find $\tau_1,\tau_2\in \mathbb{R}^+$ such that $7<\tau_1<\tau_2$, $\tau_2-\tau_1<1$, $x^{\phi}(\tau_1)=0=x^{\phi}(\tau_2)$, and $x^{\phi}(\tau)>0$ for any $\tau\in(\tau_1,\tau_2)$. Since $\tilde{\phi}\in X\setminus \mathcal{D}_{ES}$ and $x^{\tilde{\phi}}_t\neq 0$ for any $t\in\mathbb{R}^+$, by utilizing Lemma \ref{solu-orbit-1}(i) again, one can find $\tilde{\tau}_1,\tilde{\tau}_2 \in \mathbb{R}^+$ such that $\tau_2<\tilde{\tau}_1<\tau_2+1$, $\tilde{\tau}_1<\tilde{\tau}_2$, $\tilde{\tau}_2-\tilde{\tau}_1<1$, $x^{\tilde{\phi}}(\tilde{\tau}_1)=0=x^{\tilde{\phi}}(\tilde{\tau}_2)$ and $x^{\tilde{\phi}}(\tau)>0$ for any $\tau\in(\tilde{\tau}_1,\tilde{\tau}_2)$. Let $\eta=\tilde{\tau}_2-\tau_2$. Clearly, $0<\eta<2$ and $F_{\eta}(\tilde{\phi})-\phi\in \mathcal{S}$. Denoted by $d(\tau)=F_{\eta+\tau}(\tilde{\phi})(0)-F_{\tau}(\phi)(0)$ for any $\tau>0$ and $d(\theta)=F_{\eta}(\tilde{\phi})(\theta)-\phi(\theta)$ for any $\theta\in[-1,0]$. Then, $d_{\tau}=F_{\eta+\tau}(\tilde{\phi})-F_{\tau}(\phi)$ for any $\tau\in\mathbb{R}^+$. It then follows from Lemma \ref{monotonicity}(ii) that $d_{\tau}\in \text{Int}_{X^1}\mathcal{S}^1\subset \mathcal{S}$ for any $\tau\geq 3$. Together with Lemma \ref{Int-bounary-S1}, one has $d(\tau_2)=0$ with $d^{\prime}(\tau_2)\neq 0$.

\vskip 2mm
{\it Consider the case $d^{\prime}(\tau_2)>0:$} 

Then, one has that \begin{equation}\label{d-t2-so} d(\tau)>0\,\, \text{for any}\,\, \tau\in(\tau_2,\tau_2+1]\,\,\text{and}\,\, d(\tau)<0\,\, \text{for any} \,\, \tau\in[\tau_2-1,\tau_2).\end{equation}
 Recall that $\tau_1\in(\tau_2-1,\tau_2)$. Note that $0\leq x^{\tilde{\phi}}(\tau+\eta)<x^{\phi}(\tau)$ for any $\tau\in[\tilde{\tau}_1-\eta,\tau_2)$. It then follows from $x^{\phi}(\tau_1)=0$ and $x^{\phi}(\tau)>0$ for any $\tau\in(\tau_1, \tau_2)$ that $\tau_1<\tilde{\tau}_1-\eta$. Note that $(x^{\tilde{\phi}})^{\prime}(\tau_2+\eta)=(x^{\tilde{\phi}})^{\prime}(\tilde{\tau}_2)\leq 0$. By  $d^{\prime}(\tau_2)>0$, one can obtain that $(x^{\phi})^{\prime}(\tau_2)<0$ and hence, 
 
 \begin{equation} \exists\,\,\epsilon_0>0\,\,\text{such that for any}\,\,\epsilon\in(0,\,\epsilon_0),\,\,x^{\phi}(\tau_2+\epsilon)<0.\end{equation} By utilizing Lemma \ref{solu-orbit-1}(i) and $\phi\in X\setminus (\mathcal{D}_{ES}\cup \{0\})$, there is the smallest zero $\tau_3$ of $x^{\phi}$ in $(\tau_2,\tau_1+1)$. Then, $x^{\phi}(\tau)<0$ for any $\tau\in(\tau_2,\tau_3)$ and hence, $(x^{\phi})^{\prime}(\tau_3)\geq 0$. Let $\tilde{\tau}_3$ be the largest zero of $x^{\tilde{\phi}}$ in $[\tilde{\tau}_2, \tau_3+\eta]$. By (\ref{d-t2-so}), one has $x^{\tilde{\phi}}(\tau_3+\eta)>x^{\phi}(\tau_3)=0$, and then $\tau_2 \leq\tilde{\tau}_3-\eta<\tau_3$. Thus, one has 
 
\begin{equation} x^{\tilde{\phi}}(\tilde{\tau}_3)=0\,\,\text{and}\,\,(x^{\tilde{\phi}})^{\prime}(\tilde{\tau}_3)\geq 0.\end{equation}
  
Let $\delta=\tau_3-\tilde{\tau}_3+\eta$, $h(\tau)= \begin{cases} F_{\eta+\tau}(\tilde{\phi})(0)-F_{\delta+\tau}(\phi)(0)\quad\tau\geq 0\\
F_{\eta}(\tilde{\phi})(\tau)-F_{\delta}(\phi)(\tau)\quad\tau\in[-1,0]\end{cases}$ and $h_{\tau}(\theta)=h(\tau+\theta)$ with $\theta\in[-1,0]$. By Lemma \ref{monotonicity}(ii) again, one has that $h_{\tau}\in\text{Int}_{X^1}\mathcal{S}^1\subset \mathcal{S}$ for any $\tau\geq 3$. Together with Lemma \ref{Int-bounary-S1}, one has that $h(\tilde{\tau}_3-\eta)=x^{\tilde{\phi}}(\tilde{\tau}_3)-x^{\phi}(\tau_3)=0\,\,\text{with}\,\,h^{\prime}(\tilde{\tau}_3-\eta)\neq 0$. If $h^{\prime}(\tilde{\tau}_3-\eta)>0$, then $h(\tau)>0\,\,\text{for any}\,\,\tau\in(\tilde{\tau}_3-\eta, \tilde{\tau}_3-\eta+1]$ and $h(\tau)<0\,\,\text{for any}\,\,\tau\in[\tilde{\tau}_3-\eta-1, \tilde{\tau}_3-\eta)$. Moreover, $(x^{\tilde{\phi}})^{\prime}(\tilde{\tau}_3)> (x^{\phi})^{\prime}(\tau_3)\geq 0$. Recall that $(x^{\tilde{\phi}})^{\prime}(\tilde{\tau}_2)\leq 0$. Then, $\tilde{\tau}_3>\tilde{\tau}_2$ and hence, $\tau_2+\delta\in(\tau_2,\tau_3)$ and $x^{\phi}(\tau_2+\delta)<0$. Consequently, $h(\tau_2)=x^{\tilde{\phi}}(\tilde{\tau}_2)-x^{\phi}(\tau_2+\delta)=-x^{\phi}(\tau_2+\delta)>0$. Recall that $\tau_3-\tau_2\in(0,1)$ and $\tilde{\tau}_3-\eta\in (\tau_2,\tau_3)$. Then, $\tau_2\in(\tilde{\tau}_3-\eta-1, \tilde{\tau}_3-\eta)$ and hence, $h(\tau_2)<0$ a contradiction. Thus, $h^{\prime}(\tilde{\tau}_3-\eta)<0$. It then follows that \begin{equation}\label{h-t3-so} h(\tau)<0\,\,\text{for any}\,\,\tau\in(\tilde{\tau}_3-\eta, \tilde{\tau}_3-\eta+1],\,\,\text{and}\,\,h(\tau)>0\,\,\text{for any}\,\,\tau\in[\tilde{\tau}_3-\eta-1, \tilde{\tau}_3-\eta).\end{equation}  Moreover, $0\leq (x^{\tilde{\phi}})^{\prime}(\tilde{\tau}_3)< (x^{\phi})^{\prime}(\tau_3)$. Then, $0< (x^{\phi})^{\prime}(\tau_3)$. By utilizing Lemma \ref{solu-orbit-1}(i) and $\phi\in X\setminus (\mathcal{D}_{ES}\cup\{0\})$ again, there is the smallest zero $\tau_4$ of $x^{\phi}$ in $(\tau_3,\tau_2+1)$ such that $x^{\phi}(\tau)>0$ in $(\tau_3,\tau_4)$. Recall that $x^{\tilde{\phi}}(\tau)>0$ in $(\tilde{\tau}_3, \tau_3+\eta)$. By Lemma \ref{solu-orbit-1}(i) and $\tilde{\phi}\in X\setminus (\mathcal{D}_{ES}\cup\{0\})$, there is the smallest zero $\tilde{\tau}_4$ of $x^{\tilde{\phi}}$ in $(\tau_3+\eta, \tilde{\tau}_3+1)$ such that $x^{\tilde{\phi}}(\tau)>0$ in $(\tilde{\tau}_3, \tilde{\tau}_4)$. Note that $\tilde{\tau}_3-\eta=\tau_3-\delta$ and $\tau_3\in(\tau_2,\tau_1+1)$. Then, one has that $0<\delta<\tilde{\tau}_4-\tilde{\tau}_3$. Note that $(\tilde{\tau}_3-\eta, \,\tilde{\tau}_4-\eta)\cup (\tau_3-\delta,\tau_4-\delta)\subset (\tilde{\tau}_3-\eta, \tilde{\tau}_3-\eta+1]$. By (\ref{h-t3-so}), one has $0\leq x^{\tilde{\phi}}(\tilde{\tau}_3+\epsilon)<x^{\phi}(\tau_3+\epsilon)$ for any $\epsilon\in [0,\tilde{\tau}_4-\tilde{\tau}_3]$. Consequently, $\tilde{\tau}_4-\tilde{\tau}_3<\tau_4-\tau_3$, $(\tilde{\tau}_3-\eta, \,\tilde{\tau}_4-\eta)\subset (\tau_3-\delta,\tau_4-\delta)\subset (\tau_2, \tau_2+1] \cap(\tilde{\tau}_3-\eta, \tilde{\tau}_3-\eta+1]$, and $\tilde{\tau}_4-\eta\in (\tau_3,\tau_4)$. Then, $x^{\phi}(\tilde{\tau}_4-\eta)>0$. It furthermore follows from (\ref{d-t2-so}) that $0=x^{\tilde{\phi}}(\tilde{\tau}_4)>x^{\phi}(\tilde{\tau}_4-\eta)$, a contradiction.\\
 
 {\it Consider the case $d^{\prime}(\tau_2)<0:$}
 
Let $\tilde{d}(\tau)=x^{\phi}(\tau)-x^{\tilde{\phi}}(\tau+\eta)$ for $\tau\in[-1,+\infty)$. Then, $\tilde{d}^{\prime}(\tau_2)>0$. Note that the shape of the gragh of the function $\tilde{d}$ will not be changed no matter  $\tau_2$ is greater or smaller than $\tilde{\tau}_2$. By the same arguments for the case $d^{\prime}(\tau_2)>0$, we can deduce the contray, and then, prove that there exist $t_1,t_2>0$ such that $F_{t_1}(\tilde{\phi})-F_{t_2}(\phi)\in X\setminus \text{Cl}_{X}\mathcal{S}$.
 
Therefore, we have completed the proof.
\end{proof}

\vskip 5mm
{\it Proof of the main theorem}:

Firstly, we prove that $\mathcal{D}_{ES}$ is open, i.e., $\mathcal{D}_{ES}=\text{Int}_{X}\mathcal{D}_{ES}$. Given a $\phi\in \mathcal{D}_{ES}$. By Lemma \ref{solu-orbit-1}(i), there is a $t_0>0$ such that $F_{t_0}(\phi)\in \mathcal{S}$. Recall that $0$ is an equilibrium of $F_t$. It then follows from Lemma \ref{monotonicity}(iii) and the continuity of $F$ that such that there is an open neighborhood $\mathcal{U}_{\phi}$ of $\phi$ and a $t_{\phi}>0$ such that $F_{t_0+t_{\phi}}(\mathcal{U}_{\phi})\subset \mathcal{S}$. By Lemma \ref{solu-orbit-1}(i) again, one has $\mathcal{U}_{\phi}\subset \mathcal{D}_{ES}$. It implies that $\mathcal{D}_{ES}$ is open. 

Now, we prove that $\mathcal{D}_{ES}$ is dense. By Lemma \ref{solu-orbit-1}(i), $\mathcal{S}\subset \mathcal{D}_{ES}$. It then follows from $0\in {\rm Cl}_{X}\mathcal{S}$ that $0\in {\rm Cl}_{X}\mathcal{D}_{ES}$. So, we only need to consider the point $\phi$ in $X\setminus(\{0\}\cup \mathcal{D}_{ES})$. By virtue of Lemma \ref{OS-D}, one has that $\omega(\phi)\subset X\setminus \mathcal{S}$. Together with Lemma \ref{17-D} and \ref{PseO-to-SO}, $\omega(\phi)$ is either (a) an equilibrium (i.e., $\omega(\phi)=\{0\}$), or (b) an unordered set. 

\noindent {\it Consider case {\rm(}a{\rm)}}: It then follows from Lemma \ref{order-rela-betw-ome} and \ref{C0-TR} that $\omega(\tilde{\phi})\cap \mathcal{S}\neq \emptyset$ for any $\tilde{\phi}\in\mathcal{M}_{\phi}$. Together with Lemma \ref{OS-D}, one has $\tilde{\phi}\in \mathcal{D}_{ES}$ for any $\tilde{\phi}\in\mathcal{M}_{\phi}$. Clearly, $K^+\cup K^-\subset {\rm Int}_{X}\mathcal{S}$ and hence, $\phi+(K^+ \cup K^-)\subset \mathcal{M}_{\phi}$. So, $\phi\in{\rm Cl}_{X}\mathcal{D}_{ES}$.

\noindent {\it Consider case {\rm(}b{\rm)}}: It then follows from Lemma \ref{order-rela-betw-ome} that $\omega(\tilde{\phi})\cap \omega(\phi)=\emptyset$ and $\omega(\tilde{\phi})\thicksim \omega(\phi)$ for any $\tilde{\phi}\in \mathcal{M}_{\phi}$. Clearly, $0\notin \omega(\tilde{\phi})$ for any $\tilde{\phi}\in\mathcal{M}_{\phi}$ because of $\omega(\phi)\subset X\setminus \mathcal{S}$. If $0\in \omega(\phi)$, then $\omega(\tilde{\phi})\cap\mathcal{S}\neq \emptyset$, and by Lemma \ref{OS-D} again, $\tilde{\phi}\in\mathcal{D}_{ES}$ for any $\tilde{\phi}\in\mathcal{M}_{\phi}$. If $0\notin \omega(\phi)$, then $\tilde{\phi}\in\mathcal{D}_{ES}$ for any $\tilde{\phi}\in \mathcal{M}_{\phi}$. (Otherwise, $\tilde{\phi}\notin\mathcal{D}_{ES}$. By virtue of Lemma \ref{OS-D} again, one has that $\omega(\tilde{\phi})\subset X\setminus {\rm Cl}_{X}\mathcal{S}$. It then follows from Lemma \ref{solu-orbit-1} and the uniquiness of equilibirium of $F_t$ that $\psi,\tilde{\psi}\in X\setminus (\{0\}\cup \mathcal{D}_{ES})$ for any $\psi\in\omega(\phi)$ and $\tilde{\psi}\in\omega(\tilde{\phi})$. Together with Lemma \ref{order-bet-rap}, there are points $\psi\in\omega(\phi)$ and $\tilde{\psi}\in\omega(\tilde{\phi})$ such that $\psi\rightharpoondown \tilde{\psi}$, a contradiction to $\omega(\tilde{\phi})\thicksim \omega(\phi)$.) Thus, $\tilde{\phi}\in\mathcal{D}_{ES}$ for any $\tilde{\phi}\in\mathcal{M}_{\phi}$. By repeating the arguments in Case (a), one has $\phi\in{\rm Cl}_{X}\mathcal{D}_{ES}$.

\noindent Together with the conclusions of case (a) and case (b), one has that $\mathcal{D}_{ES}$ is dense.
\vskip 5mm
Therefore, we have completed the proof. $\quad\quad\quad\quad\quad\quad\quad\quad\quad\quad\quad\quad\quad\quad\quad\quad\quad\quad\quad\quad\quad\quad \square$

\vskip 5mm
\noindent {\bf Acknowledgments}   The author would like to appreciate that Prof. Yi Wang and Prof. Jianhong Wu propose the author to do the research on the open dense conjecture on eventually slow oscillations of the differential equation with delayed negative feedback.

\end{document}